\definecolor{verylight}{gray}{0.97}
\definecolor{light}{gray}{0.9}
\definecolor{medium}{gray}{0.85}
\definecolor{dark}{gray}{0.6}
 \def\NZQ{\mathbb}               
 \def\KK{{\NZQ K}}
 \def\frk{\mathfrak}               
 \def\mm{{\frk m}}
\def\KK{{\NZQ K}}
 \def\G{{\mathcal G}}
\def\Ht{\operatorname{ht}} 
 \def\0b{{\mathbf 0}}
\def\reg{{\mathbf reg}}
\def\height{\operatorname{ht}}
\def\depth{\operatorname{depth}}
\def\Char{\operatorname{char}}
 \def\opn#1#2{\def#1{\operatorname{#2}}} 
 \opn\chara{char} \opn\length{\ell} \opn\pd{pd} \opn\rk{rk}
 \opn\projdim{proj\,dim} \opn\injdim{inj\,dim} \opn\rank{rank}
 \opn\depth{depth} \opn\grade{grade} \opn\height{height}
 \opn\embdim{emb\,dim} \opn\codim{codim}
 \opn\Tr{Tr} \opn\bigrank{big\,rank}
 \opn\superheight{superheight}\opn\lcm{lcm}
 \opn\trdeg{tr\,deg}
 \opn\reg{reg} \opn\lreg{lreg} \opn\ini{in} \opn\lpd{lpd}
 \opn\size{size} \opn\sdepth{sdepth}
 \opn\link{link}\opn\fdepth{fdepth}\opn\lex{lex}
 \opn\tr{tr}
 \opn\type{type}
 \opn\gap{gap}
 \opn\arithdeg{arith-deg}
 \opn\HS{HS}
 \opn\GL{GL}
 \opn\div{div} \opn\Div{Div} \opn\cl{cl} \opn\Cl{Cl}
 \opn\Spec{Spec} \opn\Supp{Supp} \opn\supp{supp} \opn\Sing{Sing}
 \opn\Ass{Ass} \opn\Min{Min}\opn\Mon{Mon}
 \opn\Ann{Ann} \opn\Rad{Rad} \opn\Soc{Soc}\opn\Deg{Deg}
 \opn\Im{Im} \opn\Ker{Ker} \opn\Coker{Coker} \opn\Am{Am}
 \opn\Hom{Hom} \opn\Tor{Tor} \opn\Ext{Ext} \opn\End{End}
 \opn\Aut{Aut} \opn\id{id}
 \opn\nat{nat}
 \opn\pff{pf}
 \opn\Pf{Pf} \opn\GL{GL} \opn\SL{SL} \opn\mod{mod} \opn\ord{ord}
 \opn\Gin{Gin} \opn\Hilb{Hilb}\opn\sort{sort}
 \opn\PF{PF}\opn\Ap{Ap}
 \opn\mult{mult}
 \opn\bight{bight}
 \opn\aff{aff}
 \opn\relint{relint} \opn\st{st}
 \opn\lk{lk} \opn\cn{cn} \opn\core{core} \opn\vol{vol}  \opn\inp{inp} \opn\nilpot{nilpot}
 \opn\link{link} \opn\star{star}\opn\lex{lex}\opn\set{set}
 \opn\width{wd}
 \opn\Fr{F}
 \opn\QF{QF}
 \opn\G{G}
 \opn\type{type}\opn\res{res}
 \opn\conv{conv}
 \opn\Ind{Ind}
 \opn\gr{gr}
 \def\pot#1#2{#1[\kern-0.28ex[#2]\kern-0.28ex]}
 \opn\dirlim{\underrightarrow{\lim}}
 \opn\inivlim{\underleftarrow{\lim}}
 \def\Implies{\ifmmode\Longrightarrow \else
         \unskip${}\Longrightarrow{}$\ignorespaces\fi}
 \def\implies{\ifmmode\Rightarrow \else
         \unskip${}\Rightarrow{}$\ignorespaces\fi}
 \def\iff{\ifmmode\Longleftrightarrow \else
         \unskip${}\Longleftrightarrow{}$\ignorespaces\fi}
 \newtheorem{Theorem}{Theorem}[section]
 \newtheorem{Lemma}[Theorem]{Lemma}
 \newtheorem{Corollary}[Theorem]{Corollary}
 \newtheorem{Remark}[Theorem]{Remark}
 \newtheorem{Example}[Theorem]{Example}
 \newtheorem{Definition}[Theorem]{Definition}
 \let\epsilon\varepsilon
 \let\kappa=\varkappa
 \def\qed{\ifhmode\textqed\fi
       \ifmmode\ifinner\quad\qedsymbol\else\dispqed\fi\fi}
 \def\textqed{\unskip\nobreak\penalty50
        \hskip2em\hbox{}\nobreak\hfil\qedsymbol
        \parfillskip=0pt \finalhyphendemerits=0}
 \def\dispqed{\rlap{\qquad\qedsymbol}}
 \opn\dis{dis}
 \def\pnt{{\raise0.5mm\hbox{\large\bf.}}}
 \opn\Lex{Lex}
\begin{document}

\title{The edge ideals of the join of some vertex weighted oriented graphs}

\author{Yijun Cui$^{1}$,    Guangjun Zhu$^{\ast\,2}$ and Xiaoqi Wei$^{3}$}

\address{$^{1,2}$ School of Mathematical Sciences, Soochow University, Suzhou, Jiangsu, 215006, P. R. China}

\address{$^{3}$ School of Mathematics and Physics, Jiangsu University of Technology, Changzhou, Jiangsu 213001, P. R. China}

\email{237546805@qq.com(Yijun Cui), zhuguangjun@suda.edu.cn(Corresponding author:Guangjun Zhu), weixq@jsut.edu.cn(Xiaoqi Wei).}

\thanks{$^{\ast}$ Corresponding author}
\thanks{2020 {\em Mathematics Subject Classification}. Primary 13E15, 13D02; Secondary 05E40, 05C20, 05C22}

\thanks{Keywords: Regularity,  depth, symbolic powers,  ordinary powers, the join of weighted oriented graphs}

\begin{abstract}
   In this paper,  we describe primary decomposition of the edge ideal of the join of some graphs  in terms of that information of the edge ideal of every weighted oriented graph.
Meanwhile, we also study depth and regularity of symbolic powers and ordinary powers
of  such an edge ideal.  We  explicitly compute  depth and regularity of ordinary
powers of the edge ideal of the join of two  graphs consisting of isolated vertices,  and also  provide   upper bounds of regularity of
 symbolic powers of such an  edge ideal.   For the  edge ideal of the join of two graphs
 with at least an oriented edge for per graph, we give the exact formulas for their depth and regularity, and also provide the upper bounds of
regularity of ordinary powers of such an  edge ideal. Some examples show that these upper bounds can be
obtained, but may be strict.
\end{abstract}

\maketitle

\section{Introduction}
Let $G=(V(G),E(G))$ be a finite simple (no loops, no multiple edges) undirected graph.
A {\em  weighted oriented graph}  $D$ whose underlying graph is $G$, is a triplet $(V(D),E(D),w)$
where $V(D)=V(G)$ is the vertex set, $E(D)$ is a directed edge set and $w$ is a function $w: V(D)\rightarrow \mathbb{N}^{+}$, where $\mathbb{N}^{+}=\{1,2,\ldots\}$.
 Specifically, $E(D)$  consists of ordered pairs $(x_i,x_j)\in V(D)\times V(D)$ where the pair $(x_i,x_j)$
represents a directed edge from $x_i$ to $x_j$.
Some times for short we denote  $V(D)$ and  $E(D)$
by $V$ and $E$ respectively. For any $x_i\in V$, its weight  is  denoted by  $w(x_i)$ or $w_i$.
For any $x_i\in V(D)$,  the sets $N^+_D(x_i)=\{x_j\,|\, (x_i,x_j)\in E(D)\}$ and $N^-_D(x_i)=\{x_j\,|\, (x_j,x_i)\in E(D)\}$
 are called the {\em out-neighbourhood} and  {\em in-neighbourhood} of $x_i$, respectively.  Furthermore,
the set $N_D(x_i)=N^+_D(x_i)\cup N^-_D(x_i)$   is  called the {\em neighbourhood} of $x_i$.
 If $N_D(x_i)=N^+_D(x_i)$, then  $x_i$ is called a source of $D$; If $N_D(x_i)=N^-_D(x_i)$, then  $x_i$ is called a sink  of $D$.
Define $deg_D(x_i)=|N_D(x_i)|$  and $N_D[x_i]=N_D(x_i)\cup \{x_i\}$.

Let $D=(V(D),E(D),w)$  denote a weighted oriented graph with vertices $V(D)=\{x_{1},\dots,x_{n}\}$.
By identifying the vertices with the variables in the polynomial ring  $S=\KK[x_1,\ldots,x_n]$  over a field $\KK$.
we can associate to each weighted oriented graph  $D$  a monomial ideal
\[
I(D)=(x_ix_j^{w(x_j)}\mid  (x_i, x_j)\in E(D)).
\]
 This ideal is called the {\em  edge ideal} of $D$. The
generators of $I(D)$ are independent of the weight assigned to a source vertex. Therefore, to simplify our
formulas, throughout this paper, we shall assume that source vertices always have weight one.

Let $S=\KK[x_1,\ldots,x_n]$ be a polynomial ring in $n$ variables over a field $\KK$ and  $I$ be a nonzero
homogeneous ideal of $S$. Then for  $k\ge 1$,  the $k^{th}$ symbolic power of $I$  is defined as
$I^{(k)}= \bigcap \limits_{P \in \Ass(I)}(I^{k}S_{P})\cap S$,  where $\Ass(I)$ is the set
of associated prime ideals of  $I$.  Geometrically, the symbolic powers are important since
they capture all the polynomials that vanish with a given multiplicity (see \cite{DDGHN}). It is clear that $I^k\subseteq I^{(k)}$ for all $k\ge 1$ but the reverse containment may fail.
We know that the regularity and depth are two central invariants associated to $I$.
It is well known that $\reg(I^t)$ is asymptotically a linear function for $t\gg  0$, i.e., there exist constants $a$,  $b$ and a positive integer $t_0$ such that for all $t\geq t_0$, $\mbox{reg}\,(I^t)=at+b$ (see \cite{CHT,K}).
It was proved that if $I=I(G)$ is an edge ideal of a  simple graph, then $a\le 2$.
 Kumar et al. \cite{KKS} studied the regularity of  symbolic powers of edge ideals of the join of simple graphs, they show that:
\begin{Theorem}
 Let $r\ge 2$  be an integer and  $G_1*\cdots*G_r$ be the join of $r$ simple graphs $G_1,\ldots,G_r$ with   pairwise  disjoint vertex sets $V_1,\ldots, V_r$, respectively. Then
  \[
 \reg\,(S/I(G)^{(t)})=\max\{\reg\,(S/I(G_j)^{(i)})-i+t\mid 1\le i\le t,1\le j\le r\},  \text{ for all $t\ge 1$}.
\]
  \end{Theorem}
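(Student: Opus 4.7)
I prove the formula by induction on $r \ge 2$. The inductive step is routine: writing $G_1 * \cdots * G_r = (G_1 * \cdots * G_{r-1}) * G_r$, I apply the $r = 2$ formula, substitute the inductive hypothesis for $\reg(S/I(G_1 * \cdots * G_{r-1})^{(i)})$, and simplify the resulting nested maxima. The essential content is therefore the base case $r = 2$. Set $\mathfrak{m}_j := (x : x \in V_j) \subseteq S$. Since every vertex of $V_1$ is adjacent in $G_1 * G_2$ to every vertex of $V_2$, the minimal vertex covers of $G_1 * G_2$ split into two families, $V_1 \cup C_2$ and $V_2 \cup C_1$ with $C_j$ a minimal vertex cover of $G_j$. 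Combining the primary decomposition with the identity $\mathfrak{m}_j^{(k)} = \mathfrak{m}_j^{k}$ (since $\mathfrak{m}_j$ is generated by a regular sequence) and the disjoint-variable formula $(I + J)^{(t)} = \sum_{a+b=t} I^{(a)} J^{(b)}$ yields
\[
I(G_1 * G_2)^{(t)} = A \cap B, \quad A = \sum_{i=0}^{t} \mathfrak{m}_1^{t-i}\, I(G_2)^{(i)}, \quad B = \sum_{i=0}^{t} \mathfrak{m}_2^{t-i}\, I(G_1)^{(i)}.
\]

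I next compute $\reg(S/A)$ (and analogously $\reg(S/B)$) by invoking the known regularity formula for symbolic powers of sums of homogeneous ideals in disjoint variables, together with $\reg(S/\mathfrak{m}_j^{k}) = k - 1$. This gives
\[
\reg(S/A) \;=\; \max_{1 \le i \le t}\bigl\{\reg(S/I(G_2)^{(i)}) + (t - i)\bigr\},
\]
and an analogous expression for $\reg(S/B)$. Together these two expressions enumerate exactly the right-hand side of the target formula in the $r = 2$ case.

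Finally, I apply the Mayer--Vietoris short exact sequence
\[
0 \to S/(A \cap B) \to S/A \oplus S/B \to S/(A + B) \to 0,
\]
which reduces the claim to verifying the estimate $\reg(S/(A + B)) + 1 \le \max(\reg(S/A),\reg(S/B))$; this is the \textbf{main obstacle}. To establish it, first note that $\mathfrak{m}_1^t \subseteq A$ and $\mathfrak{m}_2^t \subseteq B$, so $\mathfrak{m}_1^t + \mathfrak{m}_2^t \subseteq A + B$, making $S/(A + B)$ a quotient of the Artinian ring $S/(\mathfrak{m}_1^t + \mathfrak{m}_2^t)$. Since the regularity of a graded Artinian module equals its socle degree, this forces $\reg(S/(A + B)) \le \reg(S/(\mathfrak{m}_1^t + \mathfrak{m}_2^t)) = 2t - 2$. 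Second, assuming each $G_j$ contains an edge $x_j y_j$ (degenerate cases being handled separately), $(x_j y_j)^t$ is a minimal generator of $I(G_j)^{(t)}$ of degree $2t$, which forces $\reg(S/I(G_j)^{(t)}) \ge 2t - 1$, and hence $\max(\reg(S/A),\reg(S/B)) \ge 2t - 1$. Combining these two estimates gives $\reg(S/(A + B)) + 1 \le 2t - 1 \le \max(\reg(S/A),\reg(S/B))$, and the Mayer--Vietoris sequence then yields the equality $\reg(S/I(G_1 * G_2)^{(t)}) = \max(\reg(S/A),\reg(S/B))$, which completes the induction and the proof.
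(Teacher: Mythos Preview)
The statement you were asked to prove is not proven in this paper. It appears in the introduction as a cited result of Kumar, Kumar, and Sarkar \cite{KKS}; the present paper merely quotes it to situate its own work and offers no proof. Consequently there is no ``paper's own proof'' to compare your attempt against.

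That said, your argument is essentially correct and is in fact close in spirit to the machinery this paper \emph{does} develop for its weighted analogues. A few remarks:

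\begin{itemize}
\item Your decomposition $I(G_1*G_2)^{(t)} = A \cap B$ with $A=(\mathfrak m_1+I(G_2))^{(t)}$ and $B=(\mathfrak m_2+I(G_1))^{(t)}$ is exactly the squarefree specialisation of Theorem~\ref{rdecompose} combined with Lemma~\ref{symbolic powers}; the paper uses the same idea throughout Sections~3--4.
\item Your computation of $\reg(S/A)$ and $\reg(S/B)$ is precisely an application of Lemma~\ref{sum2} (H\`a--Nguyen--Trung--Trung) with one summand equal to a power of a linear maximal ideal. Note that Lemma~\ref{sum2} as quoted here carries the hypothesis $\Char(\KK)=0$; if you intend a characteristic-free statement you should say explicitly why the formula still holds for squarefree monomial ideals, or add the hypothesis.
\item The Mayer--Vietoris step is sound: $S/(A+B)$ is Artinian with regularity at most $2t-2$, while each $G_j$ having an edge forces $(x_jy_j)^t$ to be a minimal generator of $I(G_j)^{(t)}$ (any proper divisor $x_j^ay_j^b$ with $a+b<2t$ fails to lie in $P^t$ for a minimal prime $P$ avoiding $x_j$ or $y_j$, and such primes exist). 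This gives the strict inequality needed to invoke Lemma~\ref{exact}(2) with equality.
\item You defer the degenerate case (some $G_j$ edgeless) to ``handled separately''; for completeness you should record that when $I(G_j)=0$ one has $\reg(S/I(G_j)^{(i)})=0$ and the corresponding term in the maximum is $t-i\le t-1$, which is dominated by the $i=t$ term for any $G_k$ that does have an edge, while if \emph{all} $G_j$ are edgeless the join is a complete multipartite graph and the formula reduces to $\reg(S/I(G)^{(t)})=t-1$, a direct computation.
\end{itemize}

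In short: your proof is correct (modulo the characteristic caveat and the omitted degenerate case), but there is nothing in this paper to compare it with.
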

In \cite{S}, Selvaraja defined following classes of graphs:
\[
\mathcal{A}=\{G\mid \reg(S/(I(G)^{t+1}: u))\le \reg(S/I(G)),u \in \mathcal{G}(I(G)^t),t\ge 1\},
\]
where $\mathcal{G}(I(G)^t)$  denotes the minimal monomial generating set of $I(G)^t$. He show that:
\begin{Theorem}
 Let  $G_1,G_2\in \mathcal{A}$ be two graphs with disjoint vertex sets.Then for all $t\ge 1$, then
  \[
 \reg\,(S/I(G_1*G_2)^{t})\le 2t+\reg\,(S/I(G_1*G_2))-2.
\]
  \end{Theorem}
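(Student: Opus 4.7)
The strategy is to combine Banerjee's inductive regularity inequality for edge ideals of simple graphs with the hypothesis $G_1,G_2\in\mathcal{A}$, arguing by induction on $t$. Set $G=G_1*G_2$, $I=I(G)$ and $r_0=\reg(S/I)$. The case $t=1$ is tautological. For the inductive step, the key tool (valid for the edge ideal of any simple graph) reads
\begin{equation*}
\reg(S/I^{t+1}) \;\le\; \max\Bigl\{\reg(S/I^{t}),\; 2t + \max_{u\in\mathcal{G}(I^{t})}\reg\bigl(S/(I^{t+1}:u)\bigr)\Bigr\}.
\end{equation*}
Granted the inductive hypothesis $\reg(S/I^{t})\le 2t+r_{0}-2$, the theorem will follow once we show that $G\in\mathcal{A}$, i.e.\ $\reg(S/(I^{t+1}:u))\le r_{0}$ for every $u\in\mathcal{G}(I^{t})$, because then $\reg(S/I^{t+1})\le \max\{2t+r_{0}-2,\,2t+r_{0}\}=2(t+1)+r_{0}-2$.

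The nontrivial step is thus the closure property $G_1,G_2\in\mathcal{A}\Longrightarrow G\in\mathcal{A}$. Fixing $u\in\mathcal{G}(I^{t})$, I would factor $u=u_1u_2u_{12}$, where $u_j$ is a product of edges of $G_j$ and $u_{12}$ is a product of crossing edges $x_iy_j$ with $x_i\in V(G_1)$, $y_j\in V(G_2)$. Using Banerjee's explicit description of the colon ideal---which writes $(I^{t+1}:u)=I+L_u$ with $L_u$ generated by products of \emph{neighbors} of the edges of $u$---the generators of $L_u$ can be partitioned according to which of $u_1,u_2,u_{12}$ they originate from. The contributions stemming from $u_j$ are controlled by the hypothesis $G_j\in\mathcal{A}$, applied in each factor separately. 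The contributions coming from $u_{12}$ are degenerate: in the join every $x_i\in V(G_1)$ is a neighbor of every $y_j\in V(G_2)$, forcing the corresponding neighborhood monomials to already lie in $I$.

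The chief obstacle will be converting this local combinatorial information into the \emph{uniform} regularity bound $\reg(S/(I^{t+1}:u))\le r_{0}$. I would pass from the partition of generators of $L_u$ to a short exact sequence / Mayer--Vietoris decomposition in which the two remaining pieces are governed by the colon ideals $(I(G_j)^{s+1}:u_j)$ for $s\le t$, together with a monomial complete intersection arising from the bipartite part of the join. The $\mathcal{A}$-hypothesis bounds the first two pieces by $\reg(S/I(G_j))$, while a regularity formula for $S/I(G_1*G_2)$ itself (obtained either from the $t=1$ specialization of the symbolic formula proved in \cite{KKS} or from a direct Betti-number argument for joins) is needed to merge these partial bounds into the single estimate $r_{0}$. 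Once this local bound is in place, the induction machinery sketched in the first paragraph mechanically yields the desired inequality for every $t\ge 1$.
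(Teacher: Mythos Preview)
The paper does not prove this statement. It is quoted in the Introduction as a known result of Selvaraja \cite{S}, used only as background motivation for the paper's own results on weighted oriented graphs. There is therefore no proof in the present paper to compare your attempt against.

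That said, your outline is essentially the strategy of the original source \cite{S}: reduce everything to the closure property $G_1,G_2\in\mathcal{A}\Rightarrow G_1*G_2\in\mathcal{A}$ and then feed this into Banerjee's inductive inequality. Two remarks on your sketch. First, your description of Banerjee's colon ideal is slightly off: the extra generators of $(I^{t+1}:u)$ are not ``products of neighbors of the edges of $u$'' but quadratic monomials $x_ix_j$ where $x_i,x_j$ are \emph{even-connected with respect to $u$}; this distinction matters when you try to partition contributions according to $u_1,u_2,u_{12}$, because even-connection can thread through several factors of $u$ at once. Second, the paragraph beginning ``The chief obstacle\ldots'' is where the actual work lies, and as written it is only a wish list: you have not said what the two pieces of the Mayer--Vietoris sequence are, why their intersection is tractable, or how the separate bounds $\reg(S/I(G_j))$ recombine to give $r_0=\reg(S/I(G_1*G_2))$. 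In \cite{S} this step is handled by showing that the polarization of $(I^{t+1}:u)$ is the edge ideal of an explicit graph whose regularity is controlled by that of $G_1*G_2$; a bare short exact sequence argument without that structural input is unlikely to close the gap.
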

As far as we know, little is known about how to calculate  regularities of  symbolic powers and  ordinary  powers of edge ideals of some weighted oriented graphs (see \cite{ WZX,XZWZ,ZXWT,ZXWZ}).

In \cite{B}, Brodmann  showed that  $\mbox{depth}\,(S/I^t)$ is a constant for $t\gg 0$, and this constant is bounded
above by $n-\ell(I)$, where $\ell(I)$ is the analytic
spread of $I$.  It is shown  in \cite[Theorem 1.2]{HH2} that $\mbox{depth}\,(S/I^t)$ is a nonincreasing
function of $t$ when all powers of $I$ have a linear resolution
and conditions are given in that paper under which all powers of $I$ will
have linear quotients.
In this regard, there has been an interest in determining the smallest value $t_0$ such that $\mbox{depth}\,(S/I^t)$ is a constant for all $t\geq t_ 0$.
 (see  \cite{FM,HH2,M, WZX,XZWZ,ZXWT,ZXWZ}).

In this article, we focus on algebraic properties corresponding to
the irreducible decomposition (see Theorem \ref{rdecompose}),  depth and  regularity of symbolic powers and ordinary powers
of  edge ideals of the join of some weighted oriented graphs.

Let  $r\ge 2$ be an integer and  $D_1,\ldots,D_r$ be weighted oriented graphs over   pairwise  disjoint vertex sets $V_1,\ldots, V_r$, respectively. The join of  $D_1,\ldots,D_r$, denoted by  $D_1*\cdots*D_r$,  is a weighted oriented graph over the vertex set   $V_1\sqcup \cdots \sqcup V_r$,  whose  edge set is
\[
E_1\cup\cdots \cup E_r\cup \{(x, y): x\in V_i, y\in  V_{j}\text {\ with\ }i<j\}.
\]
In particular, if $r=2$, then the  join $D_1*D_2$ of   $D_1$ and $D_2$  is a weighted oriented graph over the vertex set  $V_1\sqcup V_2$ whose  edge set is $E_1\cup E_2\cup \{(x,y) : x\in V_1, y\in  V_2\}$.

 Our main results are as follows:
\begin{Theorem}
	Let $D:=D_1*D_2$ be the join  of two  weighted oriented graphs  $D_1$ and $D_2$, where $D_i$ consists of isolated vertices with   $V_i=\{x_{ij}\,|\,  j\in [n_i]\}$ for   $i=1,2$.
	Then, for all $t$, we have
\begin{itemize}
	\item[(1)] 		$\depth\,(S/I(D)^t)=\depth\,(S/I(D)^{(t)})=1$;
	\item[(2)]  $\reg\,(S/I(D)^t)=\reg\,(S/I(D)^{(t)})= \sum\limits_{x\in V(D)}{w(x)}-|V(D)|+1+(t-1)(w+1)$
\end{itemize}
where  $w=\max\{w(x) \,|\, x\in V(D)\}$.
\end{Theorem}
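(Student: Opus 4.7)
The plan is to exploit the clean multiplicative structure of $I(D)$. Since every vertex of $V_1$ is a source (hence has weight one by convention), the edge ideal factors as $I(D)=P\cdot Q$, where $P=(x_{11},\ldots,x_{1n_1})$ and $Q=(x_{21}^{w(x_{21})},\ldots,x_{2n_2}^{w(x_{2n_2})})$, viewed as ideals of $S_1:=\KK[x_{11},\ldots,x_{1n_1}]$ and $S_2:=\KK[x_{21},\ldots,x_{2n_2}]$ respectively, with $S=S_1\otimes_\KK S_2$. Because $P$ and $Q$ live in disjoint variable sets, $I(D)^t=P^tQ^t=P^t\cap Q^t$ for every $t\geq 1$. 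The generators of $Q$ form a regular sequence, so $S_2/Q^t$ is artinian and $\Ass_S(S/Q^t)=\{\mm_2\}$ with $\mm_2=(x_{21},\ldots,x_{2n_2})$; analogously $\Ass_S(S/P^t)=\{P\}$. Hence $I(D)^t=P^t\cap Q^t$ is the irredundant primary decomposition of $I(D)^t$, $\Ass(S/I(D)^t)=\{P,\mm_2\}$, and localization identifies the two primary components with $P^t$ and $Q^t$ themselves. Since $P^{(t)}=P^t$ and $Q^{(t)}=Q^t$, one obtains $I(D)^{(t)}=I(D)^t$ directly from the definition, reducing both parts of the theorem to statements about ordinary powers.

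The main technical tool is a K\"unneth-style factorization of the resolution. Because $P$ and $Q$ sit in disjoint variables, $P^tQ^t=P^t\otimes_\KK Q^t$ as $S$-submodules of $S$. Therefore the $\KK$-tensor product of minimal graded free resolutions $\F_\bullet$ of $P^t$ over $S_1$ and $\G_\bullet$ of $Q^t$ over $S_2$ is itself a minimal graded free resolution of $I(D)^t$ over $S$ (minimality holds because the tensor-product differentials have entries in $\mm_1+\mm_2$), giving
\[
\pd_S(I(D)^t)=\pd_{S_1}(P^t)+\pd_{S_2}(Q^t),\qquad \reg_S(I(D)^t)=\reg_{S_1}(P^t)+\reg_{S_2}(Q^t).
\]
Since $P$ is the graded maximal ideal of $S_1$, $\reg_{S_1}(P^t)=t$ and $\pd_{S_1}(P^t)=n_1-1$, while the artinianity of $S_2/Q^t$ gives $\pd_{S_2}(Q^t)=n_2-1$. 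Summing gives $\pd_S(S/I(D)^t)=n-1$, so Auslander--Buchsbaum produces $\depth(S/I(D)^t)=1$, proving (1).

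For (2), only $\reg_{S_2}(Q^t)$ remains, and this is one more than the socle degree of the artinian ring $S_2/Q^t$. A monomial $\prod_j x_{2j}^{a_j}$ lies in $Q^t$ if and only if $\sum_j\lfloor a_j/w(x_{2j})\rfloor\geq t$, so a socle monomial satisfies $w(x_{2j})\mid(a_j+1)$ for every $j$ together with $\sum_j\lfloor a_j/w(x_{2j})\rfloor=t-1$. Writing $a_j=(q_j+1)w(x_{2j})-1$ with $q_j\geq 0$ and $\sum_jq_j=t-1$, the total degree equals $\sum_jq_jw(x_{2j})+\sum_jw(x_{2j})-n_2$; this is maximized by concentrating the $q_j$'s on an index $i^*$ with $w(x_{2i^*})=w:=\max_jw(x_{2j})$, giving socle degree $(t-1)w+\sum_jw(x_{2j})-n_2$ (attained, for instance, by $x_{2i^*}^{tw-1}\prod_{j\neq i^*}x_{2j}^{w(x_{2j})-1}$). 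Adding $\reg_{S_1}(P^t)=t$ and subtracting one for the shift from $I(D)^t$ to $S/I(D)^t$ produces the claimed formula. The main obstacle is this socle-degree bookkeeping; every other step follows immediately from the disjoint-variables factorization.
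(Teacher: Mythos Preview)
Your proof is correct. The overall architecture matches the paper's: both write $I(D)^t=P^tQ^t$ with $P=(x_{11},\ldots,x_{1n_1})$, $Q=(x_{21}^{w(x_{21})},\ldots,x_{2n_2}^{w(x_{2n_2})})$ in disjoint variable sets, and both reduce depth and regularity of $I(D)^t$ to those of $P^t$ and $Q^t$ via the disjoint-variables product formula (the paper quotes this as Lemma~\ref{sum1}\,(3),(4); you derive it from the tensor product of minimal resolutions, which is the same content). The two genuine differences are these. First, for $\reg(S_2/Q^t)$ the paper proceeds by induction on $n_2$ and $t$ using the colon exact sequence $0\to S_2/(Q^t:x_{21}^{w_{21}})\to S_2/Q^t\to S_2/(Q^t,x_{21}^{w_{21}})\to 0$ (their Lemma~\ref{colon}), whereas you compute the socle degree of the artinian ring $S_2/Q^t$ directly from the membership criterion $\sum_j\lfloor a_j/w(x_{2j})\rfloor\ge t$; your route is shorter and more explicit, while the paper's inductive argument generalizes more readily when the ambient ring is not artinian. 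Second, the paper establishes the symbolic-power statement separately (Theorem~\ref{symbolic isolate1}) by applying a Mayer--Vietoris sequence to $I_1^{(t)}\cap (I_2^{w_2})^{(t)}$, whereas you observe at the outset that $P^t\cap Q^t$ is already the irredundant primary decomposition of $I(D)^t$ with $\Ass(S/I(D))=\{P,\mm_2\}$, so $I(D)^{(t)}=I(D)^t$ and no second computation is needed; this is cleaner here, though the paper's exact-sequence approach is what extends to the $r\ge 3$ case treated later.
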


\begin{Theorem}
Let $r\geq2$ be an integer and let   $D:=D_1*\cdots*D_r$ be the join  of   weighted oriented graphs  $D_1,\ldots,D_r$, where $D_i$ consists of isolated vertices with  $V_i=\{x_{ij}\,|\,  j\in [n_i]\}$ for any  $i\in [r]$.
Then, for any $t\ge 1$, we have
\begin{itemize}
	\item[(1)] 		$\depth\,(S/I(D)^{(t)})=1$;
	\item[(2)]  $\reg\,(S/I(D)^{(t)})\leq \sum\limits_{x\in V}{w(x)}-|V|+1+(t-1)(w+1)$.
\end{itemize}
The	equality holds  if $w=\max\{w(x) \,|\, x\in V_2\}$, where  $w=\max\{w(x) \,|\, x\in V\}$ and $V=V_1\sqcup\cdots\sqcup V_r$.
\end{Theorem}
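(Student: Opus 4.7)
My strategy is to compute $I(D)^{(t)}$ explicitly via the irreducible decomposition of $I(D)$ and then reduce the required estimates to the $r=2$ case handled by the first main theorem stated above.

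Since each $D_i$ consists of isolated vertices, the minimal vertex covers of the underlying graph of $D$ are exactly the sets $V\setminus V_k$ for $k\in[r]$. Localizing $I(D)$ at the corresponding prime $P_k=(V\setminus V_k)$ yields the $P_k$-primary component
\[
Q_k=(V_1\cup\cdots\cup V_{k-1})+(y^{w(y)} : y\in V_{k+1}\cup\cdots\cup V_r),
\]
and Theorem~\ref{rdecompose} then gives $I(D)=\bigcap_{k=1}^{r}Q_k$. Writing $Q_k=A_k+B_k$ with $A_k,B_k$ supported in disjoint sets of variables, each power $Q_k^t$ remains $P_k$-primary, and hence $I(D)^{(t)}=\bigcap_{k=1}^{r}Q_k^{t}$.

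For part (1), the lower bound $\depth(S/I(D)^{(t)})\ge 1$ is immediate since no $P_k$ equals the maximal ideal $\mm$. The matching upper bound $\depth\le 1$ is obtained by analyzing the minimal free resolution of $\bigcap_k Q_k^{t}$: iterated mapping cones applied to the short exact sequences relating pairs $(Q_k^t,Q_l^t)$ show that $\pd(S/I(D)^{(t)})=|V|-1$, which yields the bound via the Auslander--Buchsbaum formula.

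For the regularity upper bound in part (2), I would apply the Mayer--Vietoris short exact sequence
\[
0\to S/(Q\cap Q')\to S/Q\oplus S/Q'\to S/(Q+Q')\to 0
\]
iteratively over $\bigcap_k Q_k^{t}$, together with the standard regularity inequality. The needed quantities $\reg(S/Q_k^t)$ and $\reg(S/(Q_k^t+Q_l^t))$ are tractable through the disjoint-variable structure inherited from $A_k$ and $B_k$, reducing to regularities of tensor products of powers of linear squarefree ideals with powers of pure-power complete intersections, both of which are classical. Careful bookkeeping across the iteration produces the bound $\sum_{x\in V}w(x)-|V|+1+(t-1)(w+1)$.

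To establish equality when $w=\max\{w(x) : x\in V_2\}$, I would compare $I(D)$ with the auxiliary ideal $I(\hat D)$, where $\hat D=\hat D_1*\hat D_2$ has $\hat D_1=D_1$ and $\hat D_2$ is the isolated-vertex weighted graph on $V_2\cup\cdots\cup V_r$ with inherited weights. The first main theorem applied to $\hat D$ yields $\reg(S/I(\hat D)^{(t)})=\sum_{x\in V}w(x)-|V|+1+(t-1)(w+1)$, exactly the target value, so it suffices to show that this extremal regularity is also attained by $S/I(D)^{(t)}$. The main obstacle is precisely this last comparison: although $I(\hat D)\subseteq I(D)$, the extra generators of $I(D)$ corresponding to oriented edges among $V_2,\ldots,V_r$ could in principle destroy the extremal syzygy of $S/I(\hat D)^{(t)}$. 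The hypothesis $w\in V_2$ is what makes this harmless --- it forces the extremal syzygy of $S/I(\hat D)^{(t)}$ to involve only variables in $V_1\cup V_2$ (where $I(D)$ and $I(\hat D)$ have identical generators in the relevant range of degrees), so the syzygy survives in $S/I(D)^{(t)}$ and supplies the matching lower bound.
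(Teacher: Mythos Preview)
Your decomposition $I(D)^{(t)}=\bigcap_{k=1}^{r}Q_k^{t}$ is correct, and the overall shape (Mayer--Vietoris on this intersection) matches the paper. But the execution has a real gap. For the regularity bound you say you only need $\reg(S/Q_k^t)$ and the pairwise sums $\reg(S/(Q_k^t+Q_l^t))$; that is not enough. An iterated Mayer--Vietoris splitting $\bigl(\bigcap_{k<r}Q_k^{t}\bigr)\cap Q_r^{t}$ forces you to understand $\bigcap_{k<r}Q_k^{t}$ and its sum with $Q_r^{t}$, and the partial intersection is not itself of the form $I(D')^{(t)}$ for a smaller join of isolated-vertex graphs. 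The paper's key identification is that $\bigcap_{k=1}^{r-1}Q_k^{t}=(J+I_r^{w_r})^{(t)}$ with $J=I(D_1*\cdots*D_{r-1})$, and then it invokes the H\`a--Nguyen--Trung--Trung formula (Lemma~\ref{sum2}) for $\reg\bigl(S/(J+I_r^{w_r})^{(t)}\bigr)$ in terms of $\reg(S/J^{(i)})$ and $\reg(S/(I_r^{w_r})^{(j)})$, which is what lets the induction on $r$ close. You do not mention this formula, and without it your ``careful bookkeeping'' has no engine. The sum term $(J+I_r^{w_r})^{(t)}+\widehat I_r^{\,t}$ also needs a separate argument (Lemma~\ref{symbolic sum} in the paper), which your outline does not address. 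The depth upper bound via ``iterated mapping cones show $\pd=|V|-1$'' is likewise a placeholder rather than an argument.

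Your approach to the equality case is genuinely different from the paper's, but as written it does not work. The claim that the extremal syzygy of $S/I(\hat D)^{(t)}$ ``involves only variables in $V_1\cup V_2$'' whenever the maximal weight lies in $V_2$ is not justified; the regularity of $S/I(\hat D)^{(t)}$ is computed in the paper via the intersection $I_1^{t}\cap (I_2^{w_2}+\cdots+I_r^{w_r})^{t}$, and the extremal degree in the second factor typically involves \emph{all} variables in $V_2\cup\cdots\cup V_r$, not just $V_2$. Moreover, a containment $I(\hat D)\subseteq I(D)$ gives no monotonicity for regularity of symbolic powers. The paper instead proves equality by tracking exactly when each inequality in the inductive chain (in particular inside the H\`a--Nguyen--Trung--Trung maximum) becomes an equality, and shows that the hypothesis $w=\max\{w(x):x\in V_2\}$ forces the maximum to be attained at $j=1$.
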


\begin{Theorem}\label{r-partitedirected}
	Let $r\geq3$ be an integer, and let $D:=D_1*\cdots*D_r$ be the join of  weighted oriented graphs  $D_1,\ldots,D_r$,  where the vertex set of $D_i$ is  $V_i=\{x_{ij}\,|\, j\in [n_i]\}$ and $n_i=|V_i|$ for  $i\in [r]$. If each $D_i$ contains at least an  oriented edge. Then
	\begin{itemize}
		\item[(1)] $\depth\,(S/I(D))=1$;
		\item[(2)]  $\reg\,(S/I(D))\!=\!\max\{\reg\,(S_i/I(D_i))+\sum\limits_{x\in T_i}w(x)-|T_i|:i\in [r]\}$, where   $T_i=V(D)\setminus \bigcup\limits_{j=1}^i V_j$ for any  $i\in [r]$.
\end{itemize}	
\end{Theorem}	

\begin{Theorem}
Let $D:=D_1*D_2$ be the join of two weighted oriented graphs  $D_1$ and $D_2$, where $D_1$ contains at least an  oriented edge and $D_2$ is a weighted oriented complete graph.
If $\reg\,(S_1/I(D_1)^t)\leq \reg\,(S_1/I(D_1))+(t-1)(w'+1)$, where $S_1=\KK[x_{1i}: i\in V(D_1)]$ and $w'=\max\{w(x) \,|\, x\in V(D_1)\}$.
	Then, for all $t\ge 1$, we have
	\[
	\reg\,(S/I(D)^t)\leq \reg\,(S/I(D))+(t-1)(w+1)
	\]
	where $w=\max\{w(x) \,|\, x\in V(D_1)\cup V(D_2)\}$. The  equality holds  when $w'=w$ and $\reg\,(S_1/(I(D_1)^t))= \reg\,(S_1/I(D_1))+(t-1)(w'+1)$.
\end{Theorem}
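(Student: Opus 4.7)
The plan is to proceed by induction on $t$, with $t=1$ a tautology. For the inductive step $t\ge 2$, the main tool is Banerjee's colon technique: for any minimal monomial generator $u$ of $I(D)^{t-1}$, one obtains
\[
\reg(S/I(D)^t)\le \max\Bigl\{\reg(S/I(D)^{t-1}),\;\max_{u\in\mathcal{G}(I(D)^{t-1})}\bigl(\reg(S/(I(D)^t:u))+\deg u\bigr)\Bigr\}.
\]
The inductive hypothesis handles the first argument: $\reg(S/I(D)^{t-1})\le \reg(S/I(D))+(t-2)(w+1)$, which is strictly below the target. It remains to bound $\reg(S/(I(D)^t:u))+\deg u$ by $\reg(S/I(D))+(t-1)(w+1)$ for every $u\in \mathcal{G}(I(D)^{t-1})$.

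To analyze $(I(D)^t:u)$ I use the join decomposition
\[
I(D)=I(D_1)+I(D_2)+J,\qquad J=\bigl(x_{1i}x_{2j}^{w(x_{2j})}:i\in[n_1],\,j\in[n_2]\bigr).
\]
Factor $u=u_1 u_2 v$ according to whether its edge-factors come from $I(D_1)$, $I(D_2)$, or $J$. Because $D_2$ is a weighted oriented complete graph and every vertex of $V_1$ is a source of $D$ toward every vertex of $V_2$, many cross-products lie automatically in the colon; one expects $(I(D)^t:u)=I(D)+L$, where $L$ is generated by a set of $V_1$-variables together with weighted powers $x_{2j}^{w(x_{2j})}$ and a residual piece supported on $S_1$ isomorphic to a power of $I(D_1)$ of index at most $t$. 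This reduces the colon-regularity to a sum of three terms: $\reg(S_1/I(D_1)^s)$ for some $s\le t$ (bounded by the hypothesis on $D_1$), a weighted contribution $\sum_{x\in V_2}w(x)-|V_2|$ from the $V_2$-variables absorbed into $L$, and $\deg u$. Comparing against the explicit formula for $\reg(S/I(D))$ supplied by the preceding theorem in the $r=2$ case and using $w'\le w$ to absorb the per-step increment $w+1$ gives the required inequality.

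For equality when $w'=w$ and the hypothesis on $D_1$ is sharp, take $u=e^{t-1}$ for $e$ a minimal generator of $I(D_1)$ whose endpoint realizes $w'$; the resulting colon collapses so that equality in $\reg(S_1/I(D_1)^t)=\reg(S_1/I(D_1))+(t-1)(w'+1)$ propagates to equality in the main bound.

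The main obstacle is the case analysis of $(I(D)^t:u)$ for general $u$. The weight jump $x_{2j}\mapsto x_{2j}^{w(x_{2j})}$ in cross-edge generators of $J$ interacts with the multiplicities of cross-edges in $u$, so the precise shape of $L$ depends delicately on the support and exponents of the $V_2$-part of $u$. Verifying that every such configuration gives colon-regularity at most $\reg(S/I(D))+(t-1)(w+1)-\deg u$ is the technically heaviest portion of the argument.
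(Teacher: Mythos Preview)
Your approach is genuinely different from the paper's, and as written it has a real gap.

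\textbf{What the paper does.} The paper does not use Banerjee's colon technique at all. Instead it inducts on $n_2=|V(D_2)|$ (with the base case $n_2=1$ handled separately) and on $t$. For the inductive step it peels off the last vertex $x_{2n_2}$ of $D_2$: setting $J=(x_{1i}\mid i\in[n_1])+(x_{2i}\mid i\in[n_2-1])$ and writing $I(D)=I(D\setminus x_{2n_2})+Jx_{2n_2}^{w_{2n_2}}$, it computes the iterated colons $P_j=I(D)^t:(x_{2n_2}^{w_{2n_2}})^j$ and the quotients $Q_{j-1}=P_{j-1}+(x_{2n_2}^{w_{2n_2}})$ explicitly as
\[
Q_{j-1}=J^{j-1}I(D\setminus x_{2n_2})^{t-j+1}+(x_{2n_2}^{w_{2n_2}}),\qquad P_t=J^t,
\]
and then bounds each $\reg(S/Q_j)$ via the Conca--Herzog product inequality (applicable since $\dim(S/J^j)\le 1$) together with the inductive hypothesis on $D\setminus x_{2n_2}$. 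A chain of short exact sequences in $x_{2n_2}^{w_{2n_2}}$ finishes the argument. Equality comes from $Q_0$ when $w'=w$.

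\textbf{Where your argument breaks.} The step ``one expects $(I(D)^t:u)=I(D)+L$ where $L$ is generated by \ldots'' is the entire content of the proof, and you have not established it. Banerjee's structure theorem for $(I(G)^{t}:u)$ (even-connected vertices) is specific to \emph{quadratic} edge ideals of simple graphs; there is no analogue in the literature for weighted oriented graphs, where the generators $x_ix_j^{w_j}$ have varying degrees and the colons are far less tractable. Your own last paragraph acknowledges that the shape of $L$ ``depends delicately'' on the $V_2$-exponents of $u$, but you do not carry out any of that analysis. Without it, the bound $\reg(S/(I(D)^t:u))+\deg u\le \reg(S/I(D))+(t-1)(w+1)$ is unsupported.

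There is also a more basic issue: the Banerjee-type inequality you invoke,
\[
\reg(S/I^t)\le\max\Bigl\{\reg(S/I^{t-1}),\ \max_{u\in\mathcal G(I^{t-1})}\bigl(\reg(S/(I^t:u))+\deg u\bigr)\Bigr\},
\]
is proved via a filtration whose successive colons are $(I^t,u_1,\dots,u_{j-1}):u_j$, not $(I^t:u_j)$. Replacing the former by the latter is justified in Banerjee's equigenerated quadratic setting by a separate argument; for an ideal like $I(D)$, which is not equigenerated once weights exceed $1$, this reduction is not automatic and you would need to justify it.
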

Our results partially generalize the corresponding conclusion of  symbolic powers and ordinary powers
of  edge ideals of the join of some  simple  graphs, since if $w(x)=1$ for all $x\in V(D)$, then
$I(D)=I(G)$.

Our paper is organized as follows. In the preliminary section, we collect the needed notations and basic facts from the literature.
In section $2$, we describe primary decomposition of the edge ideal of the join of some graphs  in terms of that information of the edge ideal of every weighted oriented graph.
In section $3$, we study  depth and regularity of symbolic powers and ordinary
powers of the edge ideal of the join of weighted oriented graphs, where each graph
consists of isolated vertices. We provide some exact formulas for
 depth and regularity of ordinary powers  and also provide upper bounds of regularity
of symbolic powers of such an edge ideal. In section $4$, we study  depth and regularity of symbolic powers and ordinary
powers of the edge ideal of the join of weighted oriented graphs, where each graph contains at least an  oriented edge, we give upper bounds of regularity
of ordinary powers of such an edge ideal.

\medskip
\section{Preliminaries}
In this section, we gather together the needed notations and basic facts, which will be used throughout this paper.
Two important invariants  we focus on  are depth and  regularity,
we  define them by means of local cohomology modules.

Let $S$ be a positively graded algebra and  $\mm$  its maximal homogeneous ideal. Let $M$ be a finitely generated graded $S$-module. Let $H_{\mm}^{i}(M)$, for $i\geq 0$,
denote the $i$-th local cohomology module of $M$ with respect to $\mm$. We define
	\begin{align*}
\depth\,(M)&=\text{min}\,\{i\,|\,  H_{\mm}^{i}(M)\neq 0\},\\
\reg\,(M)&= \text{ max}\,\{i+j\,|\,  H_{\mm}^{i}(M)_{j}\neq 0\}.
\end{align*}

\begin{Remark}
 Let $a_i(M): =\max\,\{j\,|\, H_{\mm}^{i}(M)_{j}\neq 0\}$ with the convention that
$a_i(M):=-\infty$ if $H_{\mm}^{i}(M)=0$. Then
\begin{align*}
\depth\,(M)&=\min\,\{i\,|\, a_i(M)\neq -\infty\},\\
\reg\,(M)&=\max\,\{a_i(M)+i \mid i\geq 0\}.
\end{align*}
\end{Remark}

When $S$ is  a polynomial ring over a field $\KK$ and  $I$ is a nonzero proper homogeneous ideal in $S$,
 depth and  regularity of $I$ are closely related to the minimal free resolution and graded Betti numbers of $I$ in the following way. Suppose that $I$
admits the following minimal free resolution

\vspace{1mm}
$$0\rightarrow \bigoplus\limits_{j}S(-j)^{\beta_{p,j}(I)}\rightarrow \bigoplus\limits_{j}S(-j)^{\beta_{p-1,j}(I)}\rightarrow \cdots\rightarrow \bigoplus\limits_{j}S(-j)^{\beta_{0,j}(I)}\rightarrow I\rightarrow 0,$$
where  $S(-j)$ is an $S$-module obtained by shifting
the degrees of $S$ by $j$. The $(i,j)$-th graded Betti number $\beta_{i,j}(I)$  is
an invariant of $I$ that equals the number of minimal generators of degree $j$ in the
$i$-th syzygy module of $I$.

Let $\mbox{pd}\,(I)$ denote the {\it projective dimension} of $I$. Then
\begin{align*}
\mbox{pd}\,(I)&=\mbox{max}\,\{i\ |\ \beta_{i,j}(I)\neq 0 \text{\ for some }j\,\},\\
\reg\,(I)&=\mbox{max}\,\{j-i\ |\ \beta_{i,j}(I)\neq 0\},\\
\depth\,(I)&=n-\mbox{pd}\,(I).
\end{align*}

By looking at the minimal free resolution and Auslander-Buchsbaum formula (see Theorem 1.3.3 of \cite{BH}), it is easy to see that
\begin{Lemma}{\em (\cite[Lemma 1.3]{HTT})}
\label{quotient}
Let $u\in S$ be a monomial of degree $d$ and $J=(u)$, and let $I\subset S$ be a proper nonzero homogeneous ideal. Then
\begin{itemize}
\item[(1)] $\reg\,(I)=\reg\,(S/I)+1$;
\item[(2)] $\reg\,(S/J)=d-1$.
\end{itemize}
\end{Lemma}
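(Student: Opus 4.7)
The plan is to establish both parts by relating minimal free resolutions via the short exact sequence
\[
0 \To I \To S \To S/I \To 0.
\]
For (1), I would start from the observation that if $F_\bullet \to I \to 0$ is a minimal graded free resolution, then splicing $S$ on the right yields a minimal graded free resolution of $S/I$ with $F_\bullet$ shifted by one homological degree. In terms of graded Betti numbers this reads $\beta_{0,0}(S/I)=1$ and $\beta_{i,j}(S/I)=\beta_{i-1,j}(I)$ for all $i\ge 1$. Plugging this into the Betti-number formula for regularity stated just above the lemma gives
\[
\reg\,(S/I)=\max\,\{0,\; \reg\,(I)-1\}.
\]
Since $I$ is a proper nonzero homogeneous ideal, every minimal generator has positive degree, forcing $\reg\,(I)\ge 1$, so the maximum is attained by the second term and $\reg\,(I)=\reg\,(S/I)+1$ follows.

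For (2), I would specialize the argument to $J=(u)$. Because $u$ is a nonzerodivisor on $S$ of degree $d$, multiplication by $u$ provides a minimal graded free resolution
\[
0 \To S(-d) \xrightarrow{\cdot u} J \To 0,
\]
so $\beta_{0,d}(J)=1$ is the only nonzero Betti number of $J$. Hence $\reg\,(J)=d-d+0\cdot 1\text{-shift}=d$, more precisely $\reg\,(J)=\max\{j-i:\beta_{i,j}(J)\ne 0\}=d-0=d$. Applying part (1) yields $\reg\,(S/J)=\reg\,(J)-1=d-1$.

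Neither step presents a real obstacle; the only subtle point is the appeal to $\reg\,(I)\ge 1$ in (1), which is needed to discard the spurious maximum coming from the generator $1\in S/I$ in homological degree zero. Because the lemma assumes $I$ is a nonzero proper homogeneous ideal, this bound is automatic. An alternative route would invoke the standard inequalities $\reg\,(C)\le \max\{\reg\,(A)-1,\reg\,(B)\}$ and $\reg\,(A)\le \max\{\reg\,(B),\reg\,(C)+1\}$ for a short exact sequence $0\to A\to B\to C\to 0$ together with $\reg\,(S)=0$, but the Betti-number bookkeeping approach is cleaner here and matches the notation already introduced in the preliminaries.
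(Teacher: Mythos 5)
Your argument is correct and follows exactly the route the paper indicates: the paper gives no written proof, merely citing \cite[Lemma 1.3]{HTT} and remarking that the statement follows ``by looking at the minimal free resolution,'' and your Betti-number bookkeeping (splicing $S$ onto a minimal resolution of $I$ to get $\beta_{i,j}(S/I)=\beta_{i-1,j}(I)$ for $i\ge 1$, using properness of $I$ to discard the $\beta_{0,0}(S/I)=1$ term, and resolving $J=(u)\iso S(-d)$ by a single free module) is a correct fleshing-out of that remark. No gaps.
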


The following lemmas are often used for computing  regularity and depth  of a module.

\begin{Lemma}
	\label{product}{\em (\cite[Theorem 2.5]{CH})}
$S=\KK[x_{1},\dots,x_{n}]$ be a polynomial ring and let $I\subset S$ be a graded ideal with $\dim\,(R/I)\leq1$. Then, for any finitely generated graded $S$-module $M$, we have $\reg\,(IM)\leq\reg\,(I)+\reg\,(M)$.
\end{Lemma}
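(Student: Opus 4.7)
The plan is to analyze the short exact sequence
$$0 \to \Tor_1^S(S/I, M) \to I \otimes_S M \to IM \to 0,$$
obtained by tensoring $0 \to I \to S \to S/I \to 0$ with $M$ and extracting the image/kernel of the induced map $I \otimes_S M \to M$. From the standard regularity estimate for a three-term exact sequence, this yields
$$\reg(IM) \le \max\bigl\{\reg(I \otimes_S M),\ \reg(\Tor_1^S(S/I, M)) - 1\bigr\},$$
so it suffices to bound each of the two end terms by $\reg(I) + \reg(M)$ (up to the shift).

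For the tensor product term, I would prove $\reg(I \otimes_S M) \le \reg(I) + \reg(M)$ using the mechanism of Caviglia's tensor-product lemma. Fix minimal graded free resolutions $F_\bullet \to M$ and $G_\bullet \to I$, and consider the double complex $F_\bullet \otimes_S G_\bullet$. Filtering by rows and by columns gives two spectral sequences abutting to $\Tor_\bullet^S(I,M)$. The column filtration reads off regularity in terms of $\reg(I)$ plus the shifts of $F_\bullet$ (controlled by $\reg(M)$); the row filtration gives the symmetric statement. The hypothesis $\dim(S/I) \le 1$ enters crucially: the modules $\Tor_i^S(I, M) \iso \Tor_{i+1}^S(S/I, M)$ for $i \ge 1$ are annihilated by $I$, hence have Krull dimension at most $1$, so $H^j_\mm$ of each vanishes for $j \ge 2$. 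This vanishing prevents the spectral sequence differentials from pushing the regularity of $I \otimes_S M$ above $\reg(I) + \reg(M)$.

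For the Tor correction term, the same dimensional observation applies: $\Tor_1^S(S/I, M)$ is killed by $I$ and hence has Krull dimension at most $1$. A direct bookkeeping using the resolution $F_\bullet$ of $M$ then produces $\reg(\Tor_1^S(S/I, M)) \le \reg(I) + \reg(M) + 1$, so that the $-1$ in the short exact sequence estimate absorbs the extra shift and the combined bound becomes exactly $\reg(I) + \reg(M)$.

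The main obstacle is the first sub-claim: $\reg(A \otimes_S B) \le \reg(A) + \reg(B)$ fails for arbitrary $A, B$, so the dimensional hypothesis $\dim(S/I) \le 1$ must be put to work in an essential way. The subtle point is tracking precisely how the low-dimensionality of the higher Tor modules kills off the differentials on the $E_2$-page of the hyperhomology spectral sequence; this bookkeeping is the technical heart of the Caviglia--Herzog argument, and once it is established the rest of the proof reduces to the short exact sequence estimate outlined above.
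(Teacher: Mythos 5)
This lemma is quoted verbatim from \cite[Theorem 2.5]{CH}; the paper gives no proof of it, so there is nothing internal to compare against. Judged against the original Conca--Herzog argument, your outline is essentially a correct reconstruction of it. The short exact sequence $0 \to \Tor_1^S(S/I,M) \to I\otimes_S M \to IM \to 0$ is right, the estimate $\reg(IM)\le\max\{\reg(I\otimes_S M),\ \reg(\Tor_1^S(S/I,M))-1\}$ is the standard one, and you correctly locate where $\dim(S/I)\le 1$ enters: every $\Tor_j^S(S/I,M)$ is killed by $I$, hence has dimension at most $1$, so only $H^0_{\mm}$ and $H^1_{\mm}$ can be nonzero. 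The one caveat is that both of your sub-claims are instances of a single nontrivial lemma that you sketch rather than prove --- namely that if $\dim\Tor_1^S(A,B)\le 1$ then $\reg\Tor_i^S(A,B)\le\reg(A)+\reg(B)+i$ for all $i$ (this is \cite[Theorem 2.4]{CH}, the Caviglia/Eisenbud--Huneke--Ulrich bound); as you say, the spectral-sequence bookkeeping there is the real content, and your writeup asserts its conclusion rather than carrying it out. Granting that lemma, your derivation closes correctly. A slightly more economical route, and the one Conca--Herzog actually take, is to apply that lemma only to the pair $(S/I,M)$ with $i=0$, obtaining $\reg(M/IM)\le\reg(S/I)+\reg(M)=\reg(I)+\reg(M)-1$, and then read off $\reg(IM)\le\max\{\reg(M),\ \reg(M/IM)+1\}=\reg(I)+\reg(M)$ from $0\to IM\to M\to M/IM\to 0$; this avoids having to control $I\otimes_S M$ and $\Tor_1^S(S/I,M)$ separately. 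Either way the technical heart is the same, so your proposal is sound in structure but incomplete until that Tor-regularity lemma is actually proved.
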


\begin{Lemma}  {\em (\cite[Lemmas 2.1 and 3.1]{HT})}
	\label{exact}
Let $0\longrightarrow M\longrightarrow N\longrightarrow P\longrightarrow 0$ be a short exact
	sequence of finitely generated graded S-modules. Then we have
	\begin{itemize}
		\item[(1)]$\reg\,(N)\leq max\{\reg\,(M), \reg\,(P)\}$, the equality holds if $\reg\,(P) \neq \reg\,(M)-1$.
		\item[(2)] $\reg\,(M)\leq max\{\reg\,(N), \reg\,(P)+1\}$, the equality holds if $\reg\,(N) \neq \reg\,(P)$.
		\item[(3)]$\depth\,(N)\geq min\{\depth\,(M), \depth\,(P)\}$, the equality holds if $\depth\,(P) \neq \depth\,(M)-1$.
		\item[(4)]$\depth\,(M)\geq min\{\depth\,(N), \depth\,(P)+1\}$, the equality holds if $\depth\,(N)\\ \neq \depth\,(P)$.
	\end{itemize}
\end{Lemma}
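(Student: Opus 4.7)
The plan is to derive all four claims from a single tool: the long exact sequence in local cohomology
\[
\cdots \to H^{i-1}_\mm(P) \to H^i_\mm(M) \to H^i_\mm(N) \to H^i_\mm(P) \to H^{i+1}_\mm(M) \to \cdots
\]
coming from the short exact sequence $0\to M\to N\to P\to 0$, combined with the characterisations $\depth(X) = \min\{i : H^i_\mm(X) \ne 0\}$ and $\reg(X) = \max\{i + j : H^i_\mm(X)_j \ne 0\}$ recorded in the preliminary remark.

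For part (1), the inequality is immediate: if $H^i_\mm(N)_j \ne 0$, exactness forces one of $H^i_\mm(M)_j$ or $H^i_\mm(P)_j$ to be nonzero, so $i + j \le \max\{\reg(M), \reg(P)\}$. For equality under $\reg(P) \ne \reg(M) - 1$, I would split into the two surviving cases. If $\reg(P) \ge \reg(M)$, pick $(i, j)$ with $i + j = \reg(P)$ and $H^i_\mm(P)_j \ne 0$; then $(i+1) + j > \reg(M)$ gives $H^{i+1}_\mm(M)_j = 0$, so $H^i_\mm(N)_j$ surjects onto $H^i_\mm(P)_j$ and is nonzero. If $\reg(P) \le \reg(M) - 2$, pick $(i, j)$ realising $\reg(M)$; then $(i-1) + j > \reg(P)$ gives $H^{i-1}_\mm(P)_j = 0$, so $H^i_\mm(M)_j$ injects into $H^i_\mm(N)_j$. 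Part (2) follows symmetrically: a nonzero $H^i_\mm(M)_j$ either lands non-trivially in $H^i_\mm(N)_j$ or comes from $H^{i-1}_\mm(P)_j \ne 0$, giving $\reg(M) \le \max\{\reg(N), \reg(P) + 1\}$. Under $\reg(N) \ne \reg(P)$ the parallel dichotomy---either $\reg(N) > \reg(P)$ (forcing $H^i_\mm(M)_j \twoheadrightarrow H^i_\mm(N)_j$ via $H^i_\mm(P)_j = 0$) or $\reg(N) < \reg(P)$ (forcing $H^i_\mm(P)_j \hookrightarrow H^{i+1}_\mm(M)_j$ via $H^i_\mm(N)_j = 0$)---produces a witness for the reverse inequality.

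The depth statements follow by reading the same sequence at the bottom. For (3), below $d := \min\{\depth(M), \depth(P)\}$ both $H^i_\mm(M)$ and $H^i_\mm(P)$ vanish, so $H^i_\mm(N) = 0$ by exactness. When $\depth(P) \ne \depth(M) - 1$, either $\depth(M) \le \depth(P)$, so $H^{d-1}_\mm(P) = 0$ forces $H^d_\mm(M) \hookrightarrow H^d_\mm(N)$, or $\depth(M) \ge \depth(P) + 2$, so $H^{d+1}_\mm(M) = 0$ makes $H^d_\mm(N) \twoheadrightarrow H^d_\mm(P) \ne 0$; either way $H^d_\mm(N) \ne 0$. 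Part (4) is obtained by the mirror argument applied to the segment $H^{i-1}_\mm(P) \to H^i_\mm(M) \to H^i_\mm(N)$: below $d := \min\{\depth(N), \depth(P) + 1\}$ both outer terms vanish, so $H^i_\mm(M) = 0$; and under $\depth(N) \ne \depth(P)$ a case split at $d$ using $H^{d-1}_\mm(N) = 0$ together with either $H^{d-1}_\mm(P) \ne 0$ (when $\depth(N) \ge \depth(P) + 1$) or $H^d_\mm(N) \ne 0$ combined with $H^d_\mm(P) = 0$ (when $\depth(N) < \depth(P)$) injects a nonzero class into $H^d_\mm(M)$.

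The only real obstacle is the bookkeeping: one must verify in each part that the excluded diagonals---$\reg(P) = \reg(M) - 1$ in (1), $\reg(N) = \reg(P)$ in (2), and the depth analogues in (3) and (4)---are precisely the configurations in which the connecting homomorphism could reroute the witnessing cohomology class away from the module we need to be nonzero. Once one pair of statements is handled, the remaining two follow by exactly the same mechanism applied to the opposite end of the long exact sequence.
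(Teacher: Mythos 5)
Your argument is correct: all eight assertions do follow from the graded long exact sequence of local cohomology together with the characterizations of $\depth$ and $\reg$ recorded in the paper's preliminary remark, and your case splits correctly isolate the excluded diagonals where the connecting homomorphism could kill the witnessing class. The paper offers no proof of its own --- the lemma is quoted from [HT] --- and the proof in that source is exactly this local cohomology argument, so your approach coincides with the standard one.
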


\begin{Lemma}{\em (\cite[Lemma 2.2, Lemma 3.2]{HT})}
\label{sum1}
Let $S_{1}=\KK[x_{1},\dots,x_{m}]$ and $S_{2}=\KK[x_{m+1},\dots,x_{n}]$ be two polynomial rings  over $\KK$,  $I\subset S_{1}$ and
$J\subset S_{2}$ be two nonzero homogeneous  ideals. Let $S=S_1\otimes_\KK S_2$.  Then we have
\begin{itemize}
\item[(1)] $\reg\,(S/(I+J))=\reg\,(S_1/I)+\reg\,(S_2/J)$;
\item[(2)]$\depth\,(S/(I+J))=\depth\,(S_1/I)+\depth\,(S_2/J)$;
\item[(3)]$\reg\,(S/JI)=\reg\,(S_1/I)+\reg\,(S_2/J)+1$;
\item[(4)]$\depth\,(S/JI)=\depth\,(S_1/I)+\depth\,(S_2/J)+1$.
\end{itemize}
In particular,  if  $u$ is a monomial of degree  $d$ such that $\mbox{supp}\,(u)\cap \mbox{supp}\,(I)=\emptyset$, let $J=(u)$,  then  $\mbox{reg}\,(J)=d$ and $\mbox{reg}\,(JI)=\mbox{reg}\,(I)+d$.
\end{Lemma}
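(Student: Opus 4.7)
The plan is to reduce all four parts to one mechanism: the tensor product of minimal graded free resolutions over a field. Once this is set up, additivity of $\pd$ and $\reg$ falls out of the Künneth-type convolution of graded Betti numbers, and additivity of $\depth$ follows from Auslander--Buchsbaum. The starting observation is that the natural map $S_1 \otimes_\KK S_2 \to S$ is an isomorphism of graded $\KK$-algebras, so for any graded $S_1$-module $M$ the extension to $S$ satisfies $M \otimes_{S_1} S \cong M \otimes_\KK S_2$; because $S_2$ is $\KK$-free, tensoring a minimal graded free $S_1$-resolution of $M$ with $S_2$ over $\KK$ yields a minimal graded free $S$-resolution, so $\pd_S$ and $\reg_S$ of the extension coincide with $\pd_{S_1}$ and $\reg_{S_1}$ of $M$.

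For (1) and (2), note $S/(I+J) \cong (S_1/I) \otimes_\KK (S_2/J)$. Let $F_\bullet \to S_1/I$ and $G_\bullet \to S_2/J$ be minimal graded free resolutions over $S_1$ and $S_2$ respectively. Then $F_\bullet \otimes_\KK G_\bullet$ is a complex of free graded $S$-modules whose differentials are $S$-linear (since $F_\bullet$ is $S_1$-linear and $G_\bullet$ is $S_2$-linear and the two actions commute); it is exact with homology concentrated in degree $0$ equal to $(S_1/I) \otimes_\KK (S_2/J)$ by the Künneth formula over the field $\KK$; and it is minimal because the entries of all differentials lie in the respective irrelevant maximal ideals. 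Reading off multigraded Betti numbers gives the convolution
\[
\beta_{i,j}^S(S/(I+J)) \;=\; \sum_{p+q=i,\; a+b=j} \beta_{p,a}^{S_1}(S_1/I)\cdot \beta_{q,b}^{S_2}(S_2/J),
\]
which immediately yields $\pd_S(S/(I+J)) = \pd_{S_1}(S_1/I) + \pd_{S_2}(S_2/J)$ and $\reg_S(S/(I+J)) = \reg_{S_1}(S_1/I) + \reg_{S_2}(S_2/J)$. The second identity is (1); the first, combined with Auslander--Buchsbaum applied on both sides (using $\depth S = m + (n-m)$), gives (2).

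For (3) and (4), the key observation is that because $I \subset S_1$ and $J \subset S_2$ involve disjoint variable sets, the multiplication map $I \otimes_\KK J \to S$ is injective with image $IJ$ (in particular $IJ = I \cap J$), so $IJ \cong I \otimes_\KK J$ as graded $S$-modules. Applying the same tensor-product-of-resolutions mechanism to minimal graded free resolutions $F'_\bullet \to I$ over $S_1$ and $G'_\bullet \to J$ over $S_2$ produces a minimal graded free $S$-resolution of $IJ$, so $\pd_S(IJ) = \pd_{S_1}(I) + \pd_{S_2}(J)$ and $\reg_S(IJ) = \reg_{S_1}(I) + \reg_{S_2}(J)$. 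From $0 \to IJ \to S \to S/IJ \to 0$ together with Lemma \ref{quotient}(1) I get $\reg_S(S/IJ) = \reg_S(IJ) - 1 = \reg_{S_1}(S_1/I) + \reg_{S_2}(S_2/J) + 1$, which is (3); and $\pd_S(S/IJ) = \pd_S(IJ) + 1$ combined with Auslander--Buchsbaum gives (4). The ``in particular'' clause is then immediate: take $J = (u) \subset S_2 := \KK[\supp(u)]$, so $\reg_{S_2}(S_2/J) = d-1$ by Lemma \ref{quotient}(2), and (3) gives $\reg(S/JI) = \reg(S_1/I) + d$, i.e., $\reg(JI) = \reg(I) + d$.

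The main technical step, and the one requiring real care, is the verification that $F_\bullet \otimes_\KK G_\bullet$ really is a \emph{minimal graded free $S$-resolution}: $S$-linearity of the differentials must be unwound from the $S_1 \otimes_\KK S_2$-structure, exactness is the Künneth formula for complexes of flat $\KK$-modules (hence acyclic since $\KK$ is a field), and minimality is the fact that a sum of matrices with entries in the homogeneous maximal ideals of $S_1$ and $S_2$ has entries in the homogeneous maximal ideal of $S$. Once these are in place, everything else is grading bookkeeping, and parts (1)--(4) follow uniformly.
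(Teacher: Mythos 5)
The paper offers no proof of this lemma---it is quoted verbatim from Hoa--Tam \cite[Lemmas 2.2 and 3.2]{HT}---so there is no internal argument to compare against; your proposal supplies a correct, self-contained proof, and it is essentially the standard mechanism underlying the cited result. The core steps all check out: $S/(I+J)\cong (S_1/I)\otimes_\KK (S_2/J)$ and $IJ\cong I\otimes_\KK J$ (injectivity of the multiplication map because tensoring over the field $\KK$ preserves the inclusions $I\subset S_1$, $J\subset S_2$, and the $\KK$-span of the products $ab$ is already an $S$-submodule); the total complex $F_\bullet\otimes_\KK G_\bullet$ is a complex of graded free $S$-modules, acyclic by K\"unneth over $\KK$, and minimal since its differential $d_F\otimes 1\pm 1\otimes d_G$ has entries in $\mm_1S+\mm_2S$; the Betti-number convolution then gives additivity of $\reg$ and $\pd$ (no cancellation occurs since all terms are nonnegative), and Auslander--Buchsbaum converts the $\pd$ statements into (2) and (4). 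Two cosmetic points: the identities $\pd(S_i/I)=\pd(I)+1$ and $\reg(S_i/I)=\reg(I)-1$ require $I$ and $J$ to be \emph{proper} nonzero ideals (true in every application in the paper), and in the ``in particular'' clause one should first shrink $S_1$ to a polynomial subring containing $\supp(I)$ and disjoint from $\supp(u)$, which is harmless because $\reg$ is unchanged by adjoining variables not appearing in the ideal.
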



Let $I=Q_1\cap\cdots\cap Q_m$ be a primary decomposition of the ideal $I$. For $P\in \Ass(S/I)$,
we denote $Q_{\subseteq P}$ to be the intersection of all $Q_i$ with $\sqrt{Q_i}\subseteq P$.

\begin{Lemma}{\em (\cite[Theorem 3.7]{CEHH})}
\label{symbolic powers}
The $k^{th}$ symbolic power of a monomial ideal $I$ is
\[
I^{(k)}= \bigcap\limits_{P \in \Ass(I)}Q_{\subseteq P}^k.
\]
\end{Lemma}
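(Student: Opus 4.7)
The plan is to establish the two containments $\bigcap_{P\in\Ass(I)}Q_{\subseteq P}^{k}\subseteq I^{(k)}$ and $I^{(k)}\subseteq \bigcap_{P\in\Ass(I)}Q_{\subseteq P}^{k}$ separately, exploiting that for a monomial ideal all objects in sight (primary components, localizations at monomial primes, ordinary and symbolic powers) are again monomial ideals, so that set-theoretic comparisons reduce to divisibility of monomials.

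First, I would fix an irredundant primary decomposition $I=Q_{1}\cap\cdots\cap Q_{m}$ with $P_{j}=\sqrt{Q_{j}}$. For any $P\in\Ass(I)$, a component with $P_{j}\not\subseteq P$ contains some variable outside $P$ and therefore becomes the unit ideal after extending to $S_{P}$. Since localization is flat and commutes with finite intersections, this gives $I^{k}S_{P}=(IS_{P})^{k}=\bigl(\bigcap_{P_{j}\subseteq P}Q_{j}S_{P}\bigr)^{k}=Q_{\subseteq P}^{k}S_{P}$. Consequently $Q_{\subseteq P}^{k}\subseteq Q_{\subseteq P}^{k}S_{P}\cap S=I^{k}S_{P}\cap S$, and intersecting over $P\in\Ass(I)$ yields the easy containment $\bigcap_{P}Q_{\subseteq P}^{k}\subseteq I^{(k)}$.

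The reverse containment is the heart of the statement, and here the monomial structure is essential. The key structural fact I would isolate is that every minimal monomial generator of a $P_{j}$-primary monomial ideal $Q_{j}$ lies in the variables of $P_{j}$; hence every minimal monomial generator of $Q_{\subseteq P}=\bigcap_{P_{j}\subseteq P}Q_{j}$, and a fortiori of $Q_{\subseteq P}^{k}$, involves only variables contained in $P$. This is a short divisibility argument: if a minimal generator of $Q_{\subseteq P}$ used a variable $x_{\ell}\notin P$, then dividing by $x_{\ell}$ would yield a strictly smaller monomial still lying in every relevant $Q_{j}$, contradicting minimality.

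Armed with that, fix a monomial $f\in I^{(k)}$ and a prime $P\in\Ass(I)$. By the first step there is a monomial $s$ with support disjoint from $P$ such that $sf\in I^{k}\subseteq Q_{\subseteq P}^{k}$, so $sf$ is divisible by some minimal monomial generator $g$ of $Q_{\subseteq P}^{k}$. Because $g$ uses only variables of $P$ while $s$ uses none of them, the monomials $g$ and $s$ are coprime, and $g\mid sf$ forces $g\mid f$; thus $f\in Q_{\subseteq P}^{k}$. Intersecting over $P$ finishes the proof. The main obstacle I anticipate is a clean statement and verification of the support lemma for generators of $Q_{\subseteq P}$; once that is in place, the remaining work is routine monomial bookkeeping.
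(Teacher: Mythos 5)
Your proposal is correct, but note that the paper itself gives no proof of this lemma: it is quoted verbatim from \cite[Theorem 3.7]{CEHH}, so there is no internal argument to compare against. Your two-containment strategy is the standard one and all the steps check out: localization kills exactly the components $Q_j$ with $\sqrt{Q_j}\not\subseteq P$, giving $I^kS_P=Q_{\subseteq P}^kS_P$ and hence the easy inclusion; and the support lemma (a $P_j$-primary monomial ideal has all its minimal generators supported on the variables of $P_j$, proved exactly by the division argument you sketch, since $x_\ell\notin\sqrt{Q_j}$ forces $u/x_\ell\in Q_j$ whenever $x_\ell\mid u\in Q_j$) correctly yields the coprimality of $g$ and $s$ and thus $g\mid f$. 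Two small points are left implicit and deserve a sentence each in a polished write-up: first, the element $s\notin P$ witnessing $f\in I^kS_P$ is a priori only a polynomial, and you must extract from it a single monomial term $x^\alpha\notin P$ (possible because $P$ is a monomial prime and $I^k$ is a monomial ideal, so each term $x^\alpha f$ of $sf$ lies in $I^k$); second, your final ``intersecting over $P$'' step tests only monomials $f$, so you need that $I^{(k)}$ is itself a monomial ideal --- which follows since each contraction $I^kS_P\cap S$ equals the saturation of $I^k$ with respect to the product of the variables outside $P$. Both are routine, and with them your proof is complete and self-contained, which is arguably more than the paper offers.
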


For a positive integer $n$,  we set $[n]=\{1,2,\ldots, n\}$.
\begin{Lemma}{\em (\cite[Theorem 5.6]{HNT})}
	\label{sum2}
Let $\KK$ be a field of  $\Char\,(\KK)=0$, and let $S_{1}=\KK[x_{1},\dots,x_{m}]$, $S_{2}=\KK[x_{m+1},\dots,x_{n}]$ be two polynomial rings over  $\KK$,  and $I\subset S_{1}$,
		$J\subset S_{2}$ be two nonzero monomial ideals. Let $S=S_1\otimes_\KK S_2$.
		Then for any $t\geq 1$,  we have
\begin{itemize}
	\item[(1)]$\reg\left(\frac{S}{(I+J)^{(t)}}\right)=\max\limits_{\begin{subarray}{c}
			i\in [t-1]\\
			j\in [t]\\
	\end{subarray}} \{\reg\left(\frac{S_1}{I^{(t-i)}}\right)+\reg\left(\frac{S_2}{J^{(i)}}\right)+1,\reg\left(\frac{S_1}{I^{(t-j+1)}}\right)+\reg\left(\frac{S_2}{J^{(j)}}\right)\}$,
	\item[(2)]$\!\!\depth\!\left(\frac{S}{(I+J)^{(t)}}\right)\!=\!\min\limits_{\begin{subarray}{c}
			i\in [t-1]\\
			j\in [t]\\
	\end{subarray}}\! \{\depth\!\left(\frac{S_1}{I^{(t-i)}}\right)+\depth\!\left(\frac{S_2}{J^{(i)}}\right)+1, \depth\!\left(\frac{S_1}{I^{(t-j+1)}}\right)+\depth\!\left(\frac{S_2}{J^{(j)}}\right)\}$.
\end{itemize}
Obviously,  we also have $\reg(S/I^{(i)})=\reg(S_1/I^{(i)})$ and $\reg(S/J^{(i)})=\reg(S_2/J^{(i)})$ for any $i\ge 1$.
\end{Lemma}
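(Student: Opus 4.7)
The plan is to reduce the computation of the symbolic power $(I+J)^{(t)}$ to data coming from the symbolic powers of $I$ and $J$ separately via a filtration argument, and then apply Lemmas \ref{product}, \ref{exact}, and \ref{sum1} repeatedly. The crucial structural input is the decomposition
\[
(I+J)^{(t)} \;=\; \sum_{i+j=t}\, I^{(i)}J^{(j)},
\]
where by convention $I^{(0)} = S_1$ and $J^{(0)} = S_2$. This identity follows from Lemma \ref{symbolic powers} applied to $I+J$: since the minimal primes of $I+J$ are exactly the sums $P+Q$ with $P\in\Ass(I)$, $Q\in\Ass(J)$, and since $I$, $J$ live in disjoint variable sets, the $Q_{\subseteq P+Q}$ component splits as $(Q_1)_{\subseteq P}+(Q_2)_{\subseteq Q}$, and binomial expansion yields the displayed formula. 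The hypothesis $\Char(\KK)=0$ enters here because the interaction of symbolic and ordinary powers behaves uniformly in that setting.

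Next, set up the filtration
\[
L_0 \;=\; I^{(t)} \;\subset\; L_1 \;\subset\; \cdots \;\subset\; L_t \;=\; (I+J)^{(t)},\qquad L_k \;=\; \sum_{j=0}^{k} I^{(t-j)}J^{(j)}.
\]
Using the disjoint-variable identity $I^{(a)}J^{(b)}\cap I^{(c)}J^{(d)} = I^{(\max(a,c))}J^{(\max(b,d))}$, one shows that $L_{k-1}\cap I^{(t-k)}J^{(k)} = I^{(t-k+1)}J^{(k)}$, which gives the isomorphism
\[
L_k/L_{k-1} \;\cong\; I^{(t-k)}J^{(k)}\,/\,I^{(t-k+1)}J^{(k)} \;\cong\; \bigl(I^{(t-k)}/I^{(t-k+1)}\bigr)\otimes_\KK J^{(k)}.
\]
Combined with the short exact sequence $0\to I^{(t-k+1)}\to I^{(t-k)}\to I^{(t-k)}/I^{(t-k+1)}\to 0$ in $S_1$ and the K\"unneth-type formulas of Lemma \ref{sum1}, this identifies the regularity and depth of $L_k/L_{k-1}$ in terms of the invariants of $S_1/I^{(t-k)}$, $S_1/I^{(t-k+1)}$, and $S_2/J^{(k)}$.

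Then I would proceed by induction on $k$, applying the short exact sequence
\[
0 \;\longrightarrow\; L_k/L_{k-1} \;\longrightarrow\; S/L_{k-1} \;\longrightarrow\; S/L_k \;\longrightarrow\; 0
\]
together with Lemma \ref{exact} (parts (1)--(2) for regularity, (3)--(4) for depth). The two families of terms in the stated max/min appear naturally: the index $j\in[t]$ family $\reg(S_1/I^{(t-j+1)})+\reg(S_2/J^{(j)})$ arises from the contribution of $S/L_{k-1}$ itself (the ``leading'' quotient in each step of the filtration), while the index $i\in[t-1]$ family with the extra $+1$ shift arises from the tensor quotient $L_k/L_{k-1}$ via the connecting morphism, where the additional $1$ records the degree shift contributed by the $S_1$-short exact sequence above. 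An analogous book-keeping, with $\max\mapsto \min$ and the inequality directions in Lemma \ref{exact} reversed, yields the depth formula.

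The main obstacle will be proving the reverse inequalities, i.e., that the stated max (resp.\ min) is actually \emph{attained} rather than merely an upper (resp.\ lower) bound. Lemma \ref{exact} only gives one-sided estimates unless the sharpness condition (e.g.\ $\reg(P)\neq \reg(M)-1$) holds. To extract an equality, I expect to argue directly at the level of graded pieces of local cohomology: because the variable sets of $S_1$ and $S_2$ are disjoint, the modules $L_k/L_{k-1}$ are cohomologically ``separated'' from each other, so the local cohomology of $S/(I+J)^{(t)}$ decomposes as a direct sum of contributions indexed by the filtration steps, and no cancellation can occur between the $L_k/L_{k-1}$ terms. This separation is where the $\Char(\KK)=0$ hypothesis is likely indispensable, via the characteristic-zero form of Hochster's formula for local cohomology of monomial quotients.
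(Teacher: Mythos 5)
Note first that the paper offers no proof of this lemma: it is quoted verbatim from \cite[Theorem 5.6]{HNT}, so there is no internal argument to compare yours against. Judged on its own, your outline does track the strategy actually used in \cite{HNT}: the binomial decomposition $(I+J)^{(t)}=\sum_{i+j=t}I^{(i)}J^{(j)}$ (their Theorem 3.4), the filtration $L_k=\sum_{j\le k}I^{(t-j)}J^{(j)}$ with $L_k/L_{k-1}\cong (I^{(t-k)}/I^{(t-k+1)})\otimes_{\KK}J^{(k)}$, and K\"unneth-type formulas for $\reg$ and $\depth$ of such tensor products. Your computation $L_{k-1}\cap I^{(t-k)}J^{(k)}=I^{(t-k+1)}J^{(k)}$ is correct for monomial ideals (intersection distributes over sums of monomial ideals), and the second isomorphism theorem plus flatness over $\KK$ gives the tensor description of the filtration quotients. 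Two caveats even at this level: the interchange of intersection and sum needed to derive the binomial decomposition from Lemma \ref{symbolic powers} is not automatic and is itself the content of \cite[Theorem 3.4]{HNT} (also, the relevant fact is $\Ass(S/(I+J))=\{P+Q: P\in\Ass(S_1/I),\,Q\in\Ass(S_2/J)\}$, not a statement about minimal primes); and Lemma \ref{sum1} as stated covers only cyclic modules $S/(I+J)$ and $S/JI$, so you need the module-level K\"unneth formulas for $\reg(M\otimes_{\KK}N)$ and $\depth(M\otimes_{\KK}N)$, which are in \cite{HTT,HNT} but are not among the tools you cite.

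The genuine gap is exactly where you flag it: converting the one-sided estimates coming from Lemma \ref{exact} into the stated equalities. Your proposed remedy --- that $H^i_{\mm}(S/(I+J)^{(t)})$ ``decomposes as a direct sum of contributions indexed by the filtration steps'' because the variable sets are disjoint --- is an assertion, not an argument, and it is not true for formal reasons: the long exact sequences in local cohomology attached to $0\to L_k/L_{k-1}\to S/L_{k-1}\to S/L_k\to 0$ may have nonzero connecting homomorphisms, and disjointness of the variables of $I$ and $J$ says nothing about the maps between these subquotients, all of which are modules over the full ring $S$. In \cite{HNT} the reverse inequalities are a separate and substantial step, carried out by exhibiting explicit (multi)graded components of local cohomology that realize the extremal values. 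Your remarks on where $\Char(\KK)=0$ enters are also off the mark: the binomial decomposition of symbolic powers holds in every characteristic, and Hochster/Takayama-type formulas for local cohomology of monomial quotients are characteristic-free. As written, the proposal would establish ``$\le$'' in (1) and ``$\ge$'' in (2), but not the equalities claimed in the lemma.
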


\section{Primary decomposition  of  edge ideal of  join of  graphs}
In this section, we provide  primary decomposition of the edge ideal of the join of some graphs  in terms of that information of the edge ideal of every weighted oriented graph.
First, we give a definition of the join of   weighted oriented graphs.

\begin{Definition}\label{join}
Let  $r\ge 2$ be an integer and  $D_1,\ldots,D_r$ be weighted oriented graphs over   pairwise  disjoint vertex sets $V_1,\ldots, V_r$, respectively. The join of  $D_1,\ldots,D_r$, denoted by  $D_1*\cdots*D_r$,  is a weighted oriented graph over the vertex set   $V_1\sqcup \cdots \sqcup V_r$,  whose  edge set is
\[
E_1\cup\cdots \cup E_r\cup \{(x, y): x\in V_i, y\in  V_{j}\text {\ with\ }i<j\}.
\]
In particular, if $r=2$, then the  join $D_1*D_2$ of   $D_1$ and $D_2$  is a weighted oriented graph over the vertex set  $V_1\sqcup V_2$ whose  edge set is $E_1\cup E_2\cup \{(x,y) : x\in V_1, y\in  V_2\}$.
\end{Definition}

\begin{Example} \label{exam1}
The following are some typical examples of the join of two and three weighted oriented graphs, respectively.
	\vspace{0.5cm}
		\begin{center}
			\begin{tikzpicture}[thick,>=stealth]
			
			\setlength{\unitlength}{1mm}
			
			\thicklines

			\pgfpathellipse{\pgfpointxy{4}{6}}{\pgfpointxy{1.5}{0}}{\pgfpointxy{0}{0.4}}
			\pgfusepath{draw},
			
			\pgfpathellipse{\pgfpointxy{2}{3.5}}{\pgfpointxy{-1}{1}}{\pgfpointxy{0.1}{0.6}}
			\pgfusepath{draw}
			
			\pgfpathellipse{\pgfpointxy{6}{3.5}}{\pgfpointxy{1.1}{1}}{\pgfpointxy{0}{0.6}}
			\pgfusepath{draw}

			\pgfpathellipse{\pgfpointxy{-4.0}{2.0}}{\pgfpointxy{1.5}{0}}{\pgfpointxy{0}{0.4}}
			\pgfusepath{draw}
			
			\pgfpathellipse{\pgfpointxy{-4.0}{6.0}}{\pgfpointxy{1.5}{0}}{\pgfpointxy{0}{0.4}}
			\pgfusepath{draw}

			\put(29,60){$x_{11}$}
			\put(35,60){\circle*{1.5}}
			
			\put(47,60){$x_{12}$}
			\put(45,60){\circle*{1.5}}
			
			\put(11,42){$x_{21}$}
			\put(15,40){\circle*{1.5}}
			
			\put(24,27){$x_{22}$}
			\put(25,30){\circle*{1.5}}
			
			\put(52,27){$x_{31}$}
			\put(55,30){\circle*{1.5}}
			
			\put(65,43){$x_{32}$}
			\put(65,40){\circle*{1.5}}
			
			\put(35,60){\vector(-1,-1){11}}
			\draw[solid](2.5,5)--(1.5,4);
			\put(35,60){\vector(-1,-3){6}}
			\draw[solid](3,4.5)--(2.5,3);
			
			\put(45,60){\vector(-3,-2){17}}
			\draw[solid](3,5)--(1.5,4);
			\put(45,60){\vector(-2,-3){15}}
			\draw[solid](3.5,4.5)--(2.5,3);
			
			\put(35,60){\vector(2,-3){11}}
			\draw[solid](4.5,4.5)--(5.5,3);
			\put(35,60){\vector(3,-2){17}}
			\draw[solid](5,5)--(6.5,4);
			
			\put(45,60){\vector(1,-3){6}}
			\draw[solid](5,4.5)--(5.5,3);
			\put(45,60){\vector(1,-1){11}}
			\draw[solid](5.5,5)--(6.5,4);
			
		    \put(15,40){\vector(4,-1){22}}
			\draw[solid](3.5,3.5)--(5.5,3);
			\put(15,40){\vector(1,0){30}}
			\draw[solid](4,4)--(6.5,4);
			
			\put(25,30){\vector(1,0){15}}
			\draw[solid](4,3)--(5.5,3);
			\put(25,30){\vector(4,1){20}}
			\draw[solid](4.5,3.5)--(6.5,4);

			\put(-47,61){$x_{11}$}
			\put(-45,60){\circle*{1.5}}
			
			\put(-37,61){$x_{12}$}
			\put(-35,60){\circle*{1.5}}
			
			\put(-47,18){$x_{21}$}
			\put(-45,20){\circle*{1.5}}
			
			\put(-37,18){$x_{22}$}
			\put(-35,20){\circle*{1.5}}

			\put(-45,20){\vector(1,0){7}}
			\draw[solid](-4.5,2)--(-3.5,2);
			\put(-45,60){\vector(0,-1){20}}
			\draw[solid](-4.5,4)--(-4.5,2);
			\put(-45,60){\vector(1,-4){7}}
			\draw[solid](-4,4)--(-3.5,2);
			
			\put(-35,60){\vector(-1,-4){7}}
			\draw[solid](-4,4)--(-4.5,2);
			\put(-35,60){\vector(0,-1){20}}
			\draw[solid](-3.5,4)--(-3.5,2);
		
			\put(-40,10){$(a)$};
			\put(40,10){$(b)$};

			\end{tikzpicture}
		\end{center}

\vspace{0.8cm}
\begin{center}
	\begin{tikzpicture}[thick,>=stealth]

	\setlength{\unitlength}{1mm}
	
	\thicklines
	
	\pgfpathellipse{\pgfpointxy{-4}{2}}{\pgfpointxy{2}{0}}{\pgfpointxy{0}{0.6}}
	\pgfusepath{draw}
	
	\pgfpathellipse{\pgfpointxy{-0.8}{4}}{\pgfpointxy{0.8}{0}}{\pgfpointxy{0}{1.5}}
	\pgfusepath{draw}
	
	\pgfpathellipse{\pgfpointxy{-4}{6}}{\pgfpointxy{2}{0}}{\pgfpointxy{0}{0.6}}
	\pgfusepath{draw}

	\pgfpathellipse{\pgfpointxy{3.5}{6}}{\pgfpointxy{2}{0}}{\pgfpointxy{0}{0.5}}
	\pgfusepath{draw}

	\pgfpathellipse{\pgfpointxy{3.5}{2.2}}{\pgfpointxy{2}{0}}{\pgfpointxy{0}{1}}
	\pgfusepath{draw}

	\put(-50,62){$x_{11}$}
	\put(-50,60){\circle*{1.5}}
	
	\put(-40,62){$x_{12}$}
	\put(-40,60){\circle*{1.5}}
	
	\put(-30,62){$x_{13}$}
	\put(-30,60){\circle*{1.5}}
	
	\put(-50,17){$x_{21}$}
	\put(-50,20){\circle*{1.5}}
	
	\put(-40,17){$x_{22}$}
	\put(-40,20){\circle*{1.5}}
	
	\put(-30,17){$x_{23}$}
	\put(-30,20){\circle*{1.5}}
	
	\put(-8,50){$x_{31}$}
	\put(-10,50){\circle*{1.5}}
	
	\put(-8,30){$x_{32}$}
	\put(-10,30){\circle*{1.5}}
	
	\put(-10,50){\vector(0,-1){10}}
	\draw[solid](-1,5)--(-1,3);
	\put(-50,60){\vector(0,-1){24}}
	\draw[solid](-5,4)--(-5,2);
	\put(-50,60){\vector(1,-4){8}}
	\draw[solid](-4.5,4)--(-4,2);
	\put(-50,60){\vector(1,-2){15}}
	\draw[solid](-4,4)--(-3,2);
	\put(-50,60){\vector(1,0){7}}
	\draw[solid](-4.5,6)--(-4,6);
	\put(-50,60){\vector(4,-1){23}}
	\draw[solid](-5,6)--(-1,5);
	\put(-50,60){\vector(4,-3){23}}
	\draw[solid](-5,6)--(-1,3);
	\put(-50,20){\vector(4,1){33}}
	\draw[solid](-5,2)--(-1,3);
	\put(-50,20){\vector(4,3){23}}
	\draw[solid](-5,2)--(-1,5);
	
	\put(-40,60){\vector(-1,-4){7}}
	\draw[solid](-4.5,4)--(-5,2);
	\put(-40,60){\vector(0,-1){26}}
	\draw[solid](-4,4)--(-4,2);
	\put(-40,60){\vector(1,-4){7}}
	\draw[solid](-3.5,4)--(-3,2);
	\put(-50,20){\vector(1,0){7}}
	\draw[solid](-4.5,2)--(-4,2);
	\put(-40,60){\vector(3,-1){20}}
	\draw[solid](-4,6)--(-1,5);
	\put(-40,60){\vector(1,-1){13}}
	\draw[solid](-4,6)--(-1,3);
	\put(-40,20){\vector(3,1){20}}
	\draw[solid](-4,2)--(-1,3);
	\put(-40,20){\vector(1,1){16}}
	\draw[solid](-4,2)--(-1,5);
	
	\put(-30,60){\vector(-1,-2){15}}
	\draw[solid](-4,4)--(-5,2);
	\put(-30,60){\vector(-1,-4){8}}
	\draw[solid](-3.5,4)--(-4,2);
	\put(-30,60){\vector(0,-1){24}}
	\draw[solid](-3,4)--(-3,2);
	\put(-30,60){\vector(2,-1){10}}
	\draw[solid](-3,6)--(-1,5);
	\put(-30,60){\vector(2,-3){8.5}}
	\draw[solid](-3,6)--(-1,3);
	\put(-30,20){\vector(2,1){10}}
	\draw[solid](-3,2)--(-1,3);
	\put(-30,20){\vector(2,3){8.5}}
	\draw[solid](-3,2)--(-1,5);
	
	\put(25,62){$x_{11}$}
	\put(25,60){\circle*{1.5}}
	
	\put(35,62){$x_{12}$}
	\put(35,60){\circle*{1.5}}
	
	\put(45,62){$x_{13}$}
	\put(45,60){\circle*{1.5}}
	
	\put(25,17){$x_{21}$}
	\put(25,20){\circle*{1.5}}
	
	\put(33,25){$x_{22}$}
	\put(35,30){\circle*{1.5}}
	\put(44,17){$x_{23}$}
	\put(45,20){\circle*{1.5}}
	
	\put(25,60){\vector(1,0){7}}
	\draw[solid](2.5,6)--(3.5,6);
    \put(25,20){\vector(1,0){12}}
    \draw[solid](2.5,2)--(4.5,2);
	\put(25,20){\vector(1,1){7}}
	\draw[solid](2.5,2)--(3.5,3);
    \put(35,30){\vector(1,-1){7}}
    \draw[solid](3.5,3)--(4.5,2);
    \put(25,60){\vector(0,-1){25}}
    \draw[solid](2.5,6)--(2.5,2);
    \put(25,60){\vector(1,-3){7}}
    \draw[solid](2.5,6)--(3.5,3);
    \put(25,60){\vector(1,-2){16}}
    \draw[solid](2.5,6)--(4.5,2);
    \put(35,60){\vector(0,-1){15}}
    \draw[solid](3.5,6)--(3.5,3);
    \put(35,60){\vector(-1,-4){6}}
    \draw[solid](3.5,6)--(2.5,2);
	\put(35,60){\vector(1,-4){6}}
	\draw[solid](3.5,6)--(4.5,2);
	\put(45,60){\vector(-1,-2){16}}
	\draw[solid](4.5,6)--(2.5,2);
	\put(45,60){\vector(-1,-3){7}}
	\draw[solid](4.5,6)--(3.5,3);
	\put(45,60){\vector(0,-1){20}}
	\draw[solid](4.5,6)--(4.5,2);

	\put(-40,0){$(c)$};
	\put(40,0){$(d)$};

	
	\end{tikzpicture}
\end{center}
\end{Example}

\vspace{1.5cm}
$Fig.1$ Some typical examples of some classes of weighted bipartite and $3$-partite graphs

\medskip

\begin{Remark}\label{edgeideal}
	Let D:=$D_1*\cdots*D_r$ be the join  of   weighted oriented graphs  $D_1,\ldots,D_r$, where the  vertex set of $D_i$ is  $V_i=\{x_{ij}\mid  j\in [n_i]\}$ and  $n_i=|V_i|$ for any  $i\in [r]$.
Let $S=\KK[x_{ij}: i\in [r], j\in [n_i]]$.
Then the  edge ideal of $D$ is a monomial ideal of $S$
 \[
 I(D)=\sum\limits_{i=1}^{r}I(D_i)+(x_{ij}x_{k\ell}^{w_{k\ell}}\mid  j\in [n_i], \ell\in[n_{k}]\text {\ where\ }1\le i<k\le r).
 \]
In particular, if each $D_i$ consists of isolated vertices, then  \[
 I(D)=(x_{ij}x_{k\ell}^{w_{k\ell}}\mid  j\in [n_i], \ell\in[n_{k}]\text {\ where\ }1\le i<k\le r).
 \]
\end{Remark}

\medskip
A monomial ideal is called {\em irreducible} if it cannot be written as proper intersection of two other monomial ideals.
It is called {\em reducible} if it is not provide. It is well known that a monomial ideal is irreducible if and only if it is generated
by pure powers of variables, that is, it  has  the form $(x_{i_1}^{a_1},\ldots,x_{i_k}^{a_k})$. The  following lemma is a fundamental fact.

\begin{Lemma} \label{decomposition}{\em(\cite[Theorem 1.3.1]{HH})}
Let $I\subset S$ be a monomial ideal. Then there exists a unique decomposition
$$I= Q_1\cap\cdots\cap Q_r$$
such that  none of the  $Q_i$  can be omitted in this intersection and each  $Q_i$ is an irreducible monomial ideal.
\end{Lemma}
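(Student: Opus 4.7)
My plan is to separate existence from uniqueness, leveraging that every irreducible monomial ideal has the rigid form $(x_{i_1}^{a_1},\ldots,x_{i_k}^{a_k})$ with distinct variables, which is a fact the text has already recorded.

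For existence I would argue by Noetherian induction in contrapositive form. Let $\mathcal{F}$ be the family of monomial ideals of $S$ admitting no finite irreducible decomposition, and suppose for contradiction $\mathcal{F}\neq\emptyset$. Since $S$ is Noetherian, $\mathcal{F}$ has an inclusion-maximal element $I$. This $I$ is not itself irreducible, so $I=I_1\cap I_2$ properly with $I_1,I_2\supsetneq I$ monomial. By maximality of $I$, each $I_j$ admits a finite irreducible decomposition, and intersecting the two gives one for $I$, contradicting $I\in\mathcal{F}$. Thus existence holds, and iteratively discarding any $Q_i$ satisfying $\bigcap_{j\neq i}Q_j\subseteq Q_i$ yields an irredundant decomposition.

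For uniqueness, suppose $I=Q_1\cap\cdots\cap Q_r=P_1\cap\cdots\cap P_s$ are two irredundant irreducible decompositions. Fix $i$; by irredundancy of the first decomposition there is a monomial $u\in\bigcap_{j\neq i}Q_j\setminus Q_i$. The colon identity $(J_1\cap J_2:u)=(J_1:u)\cap (J_2:u)$ then gives $(I:u)=(Q_i:u)$, since $(Q_j:u)=S$ whenever $j\neq i$. Computing the same colon via the second decomposition yields $(Q_i:u)=\bigcap_{\ell:\,u\notin P_\ell}(P_\ell:u)$. For an irreducible monomial ideal $Q=(x_{i_1}^{a_1},\ldots,x_{i_k}^{a_k})$ and a monomial $u=\prod_j x_j^{b_j}$ with $u\notin Q$, a direct calculation gives $(Q:u)=(x_{i_\ell}^{a_\ell-b_{i_\ell}}:\ell\in[k])$, still irreducible and still determining the variables $\{x_{i_\ell}\}$ together with the exponents $a_\ell$ uniquely from $u$. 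Consequently, $(Q_i:u)$ is irreducible, so the displayed intersection collapses by the very definition of irreducibility to a single term $(P_{\ell(i)}:u)$, and the recovery of exponents forces $Q_i=P_{\ell(i)}$. A symmetric assignment produces the bijection $\{Q_1,\ldots,Q_r\}=\{P_1,\ldots,P_s\}$.

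The main obstacle is the combinatorial calculation of $(Q:u)$ and the observation that its variable-and-exponent data recover $Q$; once this is nailed down, everything else reduces to formal manipulations with colon ideals or the definition of irreducibility. A secondary point is to verify that the $P_\ell$ with $u\in P_\ell$ really do drop out of the second colon intersection, but this is immediate since such $(P_\ell:u)=S$ is absorbed. The existence half, by contrast, is essentially a cosmetic Noetherian argument once the definition of reducibility is accepted.
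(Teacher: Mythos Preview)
The paper does not supply its own proof of this lemma; it is simply quoted from Herzog--Hibi \cite[Theorem 1.3.1]{HH} and used as a black box. Your argument is therefore not being compared against anything in the paper, but it is a correct self-contained proof. The existence half is the standard Noetherian-induction argument and is fine as written, since the paper's definition of ``reducible'' already guarantees that the two larger ideals $I_1,I_2$ can be taken to be monomial.

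For uniqueness your strategy via colon ideals is sound, with one phrasing to tighten. Irreducibility of $(Q_i:u)$ does not force the intersection $\bigcap_{\ell:\,u\notin P_\ell}(P_\ell:u)$ to literally have a single term; what it gives (by iterating the definition) is that \emph{some} term $(P_{\ell_0}:u)$ already equals the whole intersection $(Q_i:u)$. Your exponent-recovery computation then yields $P_{\ell_0}=Q_i$, and the map $i\mapsto\ell_0$ is well-defined because the $P_\ell$ are pairwise distinct by irredundancy. With that adjustment the bijection $\{Q_i\}\leftrightarrow\{P_\ell\}$ follows by symmetry. The Herzog--Hibi proof, by contrast, argues uniqueness combinatorially via the maximal monomials in $S\setminus I$ (the ``corners'' of the staircase); your colon-ideal route is equally valid and arguably closer in spirit to the general primary-decomposition machinery.
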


This decomposition is called  {\it irredundant presentation}  of $I$ and each $Q_i$ is called an {\it irreducible
component} of $I$.

\medskip
 For a monomial ideal $I\subset S$,  we denote by $\mathcal{G}(I)$  the unique minimal set of monomial generators of $I$. Let  $u\in S$ be a monomial, we set $\supp\,(u)=\{x_i: x_i|u\}$. If $\mathcal{G}(I)=\{u_1,\ldots,u_m\}$, we set $\supp\,(I)=\bigcup\limits_{i=1}^{m}\supp\,(u_i)$.

Now, we determine the irredundant presentation  of the edge ideal $ I(D_1*\cdots*D_r)$ by using the irredundant presentation  of the edge ideal $I(D_i)$
of each  weighted oriented graph  $D_i$.

\begin{Theorem}\label{rdecompose}
	Let $r\geq2$ be an integer, and let $D:=D_1*\cdots*D_r$ be the join  of   weighted oriented graphs  $D_1,\ldots,D_r$, where the  vertex set of $D_i$ is  $V_i=\{x_{ij}\,|\,  j\in [n_i]\}$ and  $n_i=|V_i|$ for $i\in [r]$.
Set  $I_i= (x_{ij}\,|\,j\in[n_i])$,  $I_i^{w_i}=(x_{ij}^{w_{ij}}\,|\,j\in[n_i])$,  $\widehat{I}_i=\sum\limits_{j=1}^{i-1}{I_j}+\sum\limits_{j=i+1}^{r}{I_j^{w_j}}$ and   $\sum\limits_{j=1}^{0}{I_j}=\sum\limits_{j=r+1}^{r}{I_j^{w_j}}=(0)$ by convention. Let $I(D_i)=Q_{i1}\cap \dots \cap Q_{it_i}$ be the  irredundant presentation  of $I(D_i)$. Then the  irredundant presentation of $I(D)$ is
\[
I(D)=\bigcap\limits_{i=1}^r\bigcap\limits_{j=1}^{t_i}{(\widehat{I}_i+Q_{ij})}.
\]
\end{Theorem}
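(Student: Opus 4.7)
The plan is to reduce the claim to an intersection of the ideals $\widehat{I}_i+I(D_i)$, prove this reduced equality by induction on $r$ via the associativity $D=D_1*(D_2*\cdots*D_r)$, and finally check irredundancy by pairwise incomparability of the components.

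First I reduce. For any monomial ideals $A$ and $B_1,\ldots,B_t$ one has $\bigcap_j(A+B_j)=A+\bigcap_j B_j$; a monomial on the left is either divisible by a generator of $A$ (hence in $A$) or by a generator of every $B_j$ (hence in $\bigcap_j B_j$). Applying this with $A=\widehat{I}_i$ and $B_j=Q_{ij}$ gives $\bigcap_{j=1}^{t_i}(\widehat{I}_i+Q_{ij})=\widehat{I}_i+I(D_i)$, reducing the theorem to showing $I(D)=\bigcap_{i=1}^r(\widehat{I}_i+I(D_i))$.

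For the inclusion $I(D)\subseteq\bigcap_i(\widehat{I}_i+I(D_i))$ I check generators. An edge generator $x_{i_0 a}x_{i_0 b}^{w_{i_0 b}}$ of $I(D_{i_0})$ lies in $I(D_{i_0})$ when $i=i_0$, and for $i\ne i_0$ is divisible by $x_{i_0 a}\in I_{i_0}\subseteq\widehat{I}_i$ if $i_0<i$, or by $x_{i_0 b}^{w_{i_0 b}}\in I_{i_0}^{w_{i_0}}\subseteq\widehat{I}_i$ if $i_0>i$. A mixed-edge generator $x_{ia}x_{kb}^{w_{kb}}$ with $i<k$ lies in $\widehat{I}_{i_0}$ for every $i_0$ by a brief case analysis on $i_0$ relative to $i$ and $k$. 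For the reverse inclusion I induct on $r$. The base $r=2$ is direct: here $\widehat{I}_1=I_2^{w_2}$ and $\widehat{I}_2=I_1$, so a monomial in $(I_2^{w_2}+I(D_1))\cap(I_1+I(D_2))$ is either divisible by an edge generator of $I(D_1)$ or of $I(D_2)$, or is simultaneously divisible by some $x_{2a}^{w_{2a}}$ and some $x_{1b}$, hence by the mixed-edge monomial $x_{1b}x_{2a}^{w_{2a}}\in I(D)$. For the inductive step, set $D':=D_2*\cdots*D_r$, apply the base case to $D_1*D'$ to obtain $I(D)=(\widehat{I}_1+I(D_1))\cap(I_1+I(D'))$, then apply the inductive hypothesis to $I(D')$ (noting that $I_1+\widehat{I}_i^{D'}=\widehat{I}_i$) and distribute $I_1$ over the inner intersection, yielding $\bigcap_{i=1}^r(\widehat{I}_i+I(D_i))$.

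Finally, irredundancy. Each $\widehat{I}_i+Q_{ij}$ is irreducible since $\widehat{I}_i$ and $Q_{ij}$ are generated by pure powers of variables on disjoint blocks. By the uniqueness of the irredundant irreducible decomposition of a monomial ideal, it suffices to show pairwise incomparability of the listed components. When $i=i'$ but $j\ne j'$, the incomparability of $Q_{ij}$ and $Q_{ij'}$ (built into the irredundant decomposition of $I(D_i)$) lifts through the $\widehat{I}_i$-summand because the $Q$'s only involve $V_i$-variables. When $i\ne i'$, say $i<i'$, the witnesses are $x_{i\ell}$ (in $I_i\subseteq\widehat{I}_{i'}$ but, for a well-chosen $\ell$, not in $\widehat{I}_i+Q_{ij}$) and $x_{i'\ell'}^{w_{i'\ell'}}$ (in $I_{i'}^{w_{i'}}\subseteq\widehat{I}_i$ but, for a well-chosen $\ell'$, not in $\widehat{I}_{i'}+Q_{i'j'}$). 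The main obstacle is producing these witness indices, which reduces to the structural claim that an irredundant component $Q_{ij}$ of $I(D_i)$ is a strict sub-ideal of the block maximal ideal $I_i$ and cannot contain every pure power $x_{i\ell}^{w_{i\ell}}$; both points rest on the strong-vertex-cover description of irreducible components of edge ideals of weighted oriented graphs.
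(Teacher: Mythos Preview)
Your proof of the ideal equality $I(D)=\bigcap_{i,j}(\widehat{I}_i+Q_{ij})$ is correct and close in spirit to the paper's: both reduce to $I(D)=\bigcap_i(\widehat{I}_i+I(D_i))$ via the identity $A+\bigcap_jB_j=\bigcap_j(A+B_j)$ for monomial ideals, and both prove the latter by induction on $r$. The differences are cosmetic---the paper splits $D=(D_1*\cdots*D_{r-1})*D_r$ and handles $r=2$ by an algebraic expansion of the intersection $(\widehat I_1+I(D_1))\cap(\widehat I_2+I(D_2))$, whereas you split $D=D_1*(D_2*\cdots*D_r)$ and argue the $r=2$ case by a monomial case check; and you add an explicit verification of the inclusion $I(D)\subseteq\bigcap_i(\widehat I_i+I(D_i))$ which the paper absorbs into its computation.

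Your irredundancy argument, however, has a genuine gap---one that the paper does not fill either. The paper's own proof never verifies irredundancy; it only shows the equality and notes that each $\widehat{I}_i+Q_{ij}$ is irreducible. Your attempt relies on the structural claim that an irreducible component $Q_{i'j'}$ of $I(D_{i'})$ cannot contain every $x_{i'\ell}^{w_{i'\ell}}$, but this is false. Take $i'\ge 2$ and let $D_{i'}$ be the oriented $3$-cycle $x_1\to x_2\to x_3\to x_1$ with all weights $2$; then
\[
I(D_{i'})=(x_1x_2^2,\,x_2x_3^2,\,x_3x_1^2)=(x_1,x_3^2)\cap(x_2,x_1^2)\cap(x_3,x_2^2)\cap(x_1^2,x_2^2,x_3^2)
\]
is the irredundant irreducible decomposition, and the last component equals $I_{i'}^{w_{i'}}$. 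Now for any $i<i'$ and any $j$ one checks block by block that $\widehat{I}_i+Q_{ij}\subseteq\widehat{I}_{i'}+I_{i'}^{w_{i'}}$, so the component $\widehat{I}_{i'}+I_{i'}^{w_{i'}}$ is redundant in the displayed decomposition of $I(D)$. Thus the irredundancy asserted in the theorem fails in general, and your witness strategy cannot succeed; what both your argument and the paper's actually establish is the equality of ideals together with irreducibility of the listed components.
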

\begin{proof}
We first prove  that $I(D)=\bigcap\limits_{i=1}^r{(\widehat{I}_i+I(D_i))}$ by induction on $r$. If $r=2$, then
	\begin{align*}
	(\widehat{I}_1+I(D_1))\cap (\widehat{I}_2+I(D_2))&= \widehat I_1\cap \widehat I_2+\widehat I_1\cap I(D_2)+\widehat I_2\cap I(D_1)+I(D_1)\cap I(D_2)\\&= \widehat I_1\widehat I_2+I(D_1)+I(D_2)\\
	&=(x_{1i}x_{2j}^{w_{2j}}\mid i\in [n_1], j\in[n_{2}])+I(D_1)+I(D_2)=I(D)
	\end{align*}
where the second  equality holds because of  $\supp\,(\widehat{I}_1)\cap \supp\,(\widehat{I}_2)=\emptyset$, $I(D_1)\subset \widehat I_2$  and  $I(D_2)\subset \widehat I_1$.

 Suppose that $r\ge 3$ and that the statement holds for $r-1$, that is
	\begin{align*}
		I(D_1*\cdots*D_{r-1})=&\bigcap\limits_{i=1}^{r-1}(\widehat{I}_i+I(D_i))
		=(I_2^{w_2}+\cdots+ I_{r-1}^{w_{r-1}}+I(D_1))\cap(I_1+I_3^{w_3}+\cdots\\
		&+I_{r-1}^{w_{r-1}}+I(D_2))\cap\cdots\cap(I_1+\cdots+I_{r-2}+I(D_{r-1})).
	\end{align*}
Thus
 \begin{align*}
\bigcap\limits_{i=1}^r{(\widehat{I}_i+I(D_i))}
&=\bigcap\limits_{i=1}^{r-1}{(\widehat{I}_i+I(D_i))}\cap (\widehat{I}_r+I(D_r))\\
	&=(I_2^{w_2}+\cdots+I_{r}^{w_{r}}+I(D_1))\cap(I_1+I_3^{w_3}+\cdots+I_{r}^{w_{r}}+I(D_2))\\
&\cap\cdots\cap(I_1+\cdots+I_{r-2}+I_{r}^{w_{r}}+I(D_{r-1}))\cap(I_1+\cdots+I_{r-1}+I(D_r))\\
&=(I_r^{w_r}+I(D_1*\cdots*D_{r-1}))\cap(I_1+\cdots+I_{r-1}+I(D_r))\\
	&=\sum\limits_{i=1}^{r-1}I_i\cap I_r^{w_r}+I_r^{w_r}\cap I(D_r)+I(D_1*\cdots*D_{r-1})\cap (I_1+\cdots+I_{r-1})\\
&+I(D_1*\cdots*D_{r-1})\cap I(D_r)\\
	&=\sum\limits_{i=1}^{r-1}I_iI_r^{w_r}+I(D_1*\cdots*D_{r-1})+I(D_r)=I(D)
\end{align*}
where the penultimate  equality holds because of $\supp\,(I_i)\cap \supp\,(I_r^{w_r})=\emptyset$,  $I(D_r)\subset I_r^{w_r}$  and  $I(D_1*\cdots*D_{r-1})\subset (I_1+\cdots+I_{r-1})$, the last
 equality holds because $I(D_1*\cdots*D_{r-1})=\sum\limits_{i=1}^{r-1}I(D_i)+(x_{ij}x_{k\ell}^{w_{k\ell}}\mid  j\in [n_i], \ell\in[n_{k}]\text {\ where\ }1\le i<k\le r-1)$.

Since each $I(D_i)=\bigcap\limits_{j=1}^{t_i}Q_{ij}$, one has $\widehat I_i+I(D_i)=\widehat I_i+\bigcap\limits_{j=1}^{t_i}Q_{ij}=\bigcap\limits_{j=1}^{t_i}(\widehat{I}_i+Q_{ij})$. Note that  $\supp\,(\widehat{I}_i)\cap \supp\,(Q_{ij})=\emptyset$, it implies that the ideal $\widehat{I}_i+Q_{ij} $ is  irreducible   for any $i\in[r]$, $j\in[n_i]$.
It follows that
\[
I(D)=\bigcap\limits_{i=1}^r{(\widehat{I}_i+I(D_i))}=\bigcap\limits_{i=1}^r\bigcap\limits_{j=1}^{t_i}{(\widehat{I}_i+Q_{ij})}.
\]
We finish the proof.
\end{proof}

\medskip
Given an ideal $I\subset S$, we set
 $$\bight\,(I)=\sup\{\Ht\,(P)\mid  P \text{ is a   minimal prime  ideal  of } S \text{ over }  I\}.$$

As a direct consequence of the above theorem, one has the following corollary.
\begin{Corollary}\label{height}
Let  $D:=D_1*\cdots*D_r$ be the join  of   weighted oriented graphs  $D_1,\ldots,D_r$ as in Theorem \ref{rdecompose}.
	Then
\begin{enumerate}
 \item[(a)] $\Ht\,(I(D))=\min\{\sum\limits_{j=1, j\ne i}^{r}n_j+\Ht\,(I(D_i))\mid i\in [r]\}$,
 \item[(b)] $\dim\,(S/I(D))=\max\{n_i-\Ht\,(I(D_i))\mid i\in [r]\}$,
 \item[(c)] $\bight\,(I(D))=\max\{\sum\limits_{j=1,j\ne i}^{r}n_j+\Ht\,(I(D_i))\mid i\in [r]\}$.
\end{enumerate}
 \end{Corollary}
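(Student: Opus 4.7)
The plan is to deduce all three equalities from the irredundant irreducible decomposition given in Theorem \ref{rdecompose}, namely
\[
I(D)=\bigcap_{i,j}(\widehat{I}_i+Q_{ij}),
\]
by computing the heights of the components and identifying which of their radicals are minimal primes of $I(D)$.

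First, I would compute $\Ht(\widehat{I}_i+Q_{ij})$ for each pair $(i,j)$. Since $\supp(\widehat{I}_i)$ and $\supp(Q_{ij})$ are disjoint, and $\widehat{I}_i$ is the sum of the ideals $I_k$ and $I_k^{w_k}$ for $k\ne i$, each generated by $n_k$ pure powers on pairwise disjoint variable sets, it follows that $\Ht(\widehat{I}_i)=\sum_{k\ne i}n_k$ and hence $\Ht(\widehat{I}_i+Q_{ij})=\sum_{k\ne i}n_k+\Ht(Q_{ij})$. The irredundance of $I(D_i)=\bigcap_{j}Q_{ij}$ further yields $\min_j\Ht(Q_{ij})=\Ht(I(D_i))$.

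Next I would show that each radical $P_{ij}:=\sqrt{\widehat{I}_i+Q_{ij}}=(V\setminus V_i)+\sqrt{Q_{ij}}$ is a minimal prime of $I(D)$ by verifying that these primes are pairwise incomparable. Within a fixed block $i$, incomparability is immediate from the irredundance of the decomposition of $I(D_i)$. For distinct blocks $i\ne i'$, the prime $P_{ij}$ contains every variable of $V_{i'}$, whereas $P_{i'\ell}$ contains only those variables of $V_{i'}$ appearing in the irreducible component $Q_{i'\ell}$, a proper subset of $V_{i'}$; hence no inclusion between $P_{ij}$ and $P_{i'\ell}$ is possible.

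Once the minimal primes are pinned down, (a) follows by minimizing the heights in the first step over all $(i,j)$, (b) follows from the identity $\dim(S/I(D))=|V(D)|-\Ht(I(D))$ by distributing the subtraction through the minimum to turn it into a maximum, and (c) follows by taking the maximum in place of the minimum. I expect the pairwise incomparability check across distinct blocks in step two to be the only real obstacle; everything else is routine min/max arithmetic given the decomposition.
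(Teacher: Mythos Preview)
Your route through Theorem~\ref{rdecompose} is precisely the paper's (the paper offers no details beyond ``direct consequence''). For (a) and (b) your height computation is correct and already suffices: the radicals $P_{ij}$ exhaust $\Ass(I(D))$, so the minimum of their heights is $\Ht(I(D))$ regardless of which ones are minimal, and (b) follows from (a) as you say. The incomparability step is unnecessary for these two parts.

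Where your proposal does have a gap is in the incomparability argument itself, and hence in (c). Irredundance of the irreducible decomposition $I(D_i)=\bigcap_jQ_{ij}$ does \emph{not} force the radicals $\sqrt{Q_{ij}}$ to be pairwise incomparable: $I(D_i)$ may well have embedded primes (for instance a weighted oriented $3$-cycle with nontrivial weights has the maximal ideal of $S_i$ among its associated primes). The same example shows that $\sqrt{Q_{i\ell}}$ need not be a proper subset of $V_i$, so your cross-block check can also fail. More seriously, even if every $P_{ij}$ were a minimal prime, ``taking the maximum in place of the minimum'' produces $\max_i\bigl\{\sum_{k\neq i}n_k+\max_j\Ht(Q_{ij})\bigr\}$, and $\max_j\Ht(Q_{ij})$ is $\bight(I(D_i))$, not $\Ht(I(D_i))$. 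A correct identification of the minimal primes (they correspond to pairs $(i,P)$ with $P\in\Min(I(D_i))$) yields $\bight(I(D))=\max_i\bigl\{\sum_{k\neq i}n_k+\bight(I(D_i))\bigr\}$; this is what is actually used in the proof of Corollary~\ref{unmixed}, and the $\Ht$ on the right-hand side of (c) appears to be a slip for $\bight$.
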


\begin{Corollary}\label{unmixed}
Let  $D:=D_1*\cdots*D_r$ be the join  of   weighted oriented graphs  $D_1,\ldots,D_r$ as Theorem \ref{rdecompose}.
	Then
$I(D)$ is unmixed if and only if each $I(D_i)$ is unimxed and $n_j-\height\,(I(D_j))=n_k-\height\,(I(D_k))$ for any
different $j,k\in [r]$.
 \end{Corollary}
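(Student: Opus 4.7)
The plan is to read off the unmixedness condition directly from the irredundant primary decomposition of $I(D)$ given in Theorem \ref{rdecompose}. Recall that a monomial ideal is unmixed exactly when every irreducible component in its (unique) irredundant decomposition has the same height, or equivalently when all associated primes have the same height.

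First I would compute the height of each irreducible component $\widehat{I}_i + Q_{ij}$. Since $\widehat{I}_i$ is built from the variable ideals $I_k$ (for $k<i$) and the pure-power ideals $I_k^{w_k}$ (for $k>i$), its support lies in $\bigsqcup_{k\neq i} V_k$, while $\supp(Q_{ij})\subseteq V_i$. The supports are disjoint, and $\widehat{I}_i$ contains a pure power of every variable outside $V_i$, so
\[
\height(\widehat{I}_i + Q_{ij}) = \sum_{k\neq i} n_k + \height(Q_{ij}).
\]
Since $I(D_i)=\bigcap_{j=1}^{t_i}Q_{ij}$ is irredundant, $I(D_i)$ is unmixed iff $\height(Q_{ij})$ is independent of $j$, in which case it equals $\height(I(D_i))$.

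Next I would invoke Theorem \ref{rdecompose} to conclude that $I(D)$ is unmixed iff the quantities $\sum_{k\neq i} n_k + \height(Q_{ij})$ coincide for all $i\in[r]$ and all $j\in[t_i]$. For fixed $i$, this forces $\height(Q_{ij})$ to be constant in $j$, which is exactly unmixedness of $I(D_i)$. Comparing the $i$-th and $i'$-th components and cancelling the common sum $\sum_{k} n_k$ on both sides yields
\[
n_i - \height(I(D_i)) = n_{i'} - \height(I(D_{i'})),
\]
which is the asserted second condition. Conversely, if each $I(D_i)$ is unmixed and the differences $n_i-\height(I(D_i))$ all agree, then each component height equals the common value $\sum_{k} n_k - \bigl(n_i-\height(I(D_i))\bigr)$, so $I(D)$ is unmixed.

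The argument is essentially bookkeeping; there is no real obstacle, but one must be careful that the decomposition furnished by Theorem \ref{rdecompose} is genuinely irredundant (otherwise one could be comparing heights of redundant components). This is already part of that theorem's statement, so the reduction is clean. As a sanity check, one may also note that Corollary \ref{height}(a) and (c) give precisely the minimum and maximum of $\sum_{k\neq i}n_k+\height(I(D_i))$, and unmixedness amounts to $\height(I(D))=\bight(I(D))$ together with the unmixedness of each factor, matching the claim.
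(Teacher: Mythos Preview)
Your proposal is correct and follows essentially the same route as the paper: both arguments use the irredundant decomposition of Theorem \ref{rdecompose}, compute the height of each component $\widehat{I}_i+Q_{ij}$ as $\sum_{k\neq i}n_k+\height(Q_{ij})$, and then observe that equality of all these heights splits into unmixedness of each $I(D_i)$ together with the condition $n_j-\height(I(D_j))=n_k-\height(I(D_k))$. Your write-up is in fact somewhat more explicit than the paper's (you spell out the converse direction and justify why irredundancy matters), but the underlying argument is the same.
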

\begin{proof} By Theorem \ref{rdecompose} and  (a) and (c) in Corollary \ref{height},  we know that
$I(D)$ is unmixed if and only if $\sum\limits_{j=1,j\ne i}^{r}n_j+\Ht\,(Q_{ij})=\sum\limits_{j=1,j\ne i}^{r}n_j+\Ht\,(Q_{ik})$ for  any  different $j,k\in [t_i]$  and $i\in [r]$, and $\sum\limits_{i=1,i\ne j}^{r}n_i+\Ht\,(I(D_j))=\sum\limits_{i=1,i\ne k}^{r}n_i+\Ht\,(I(D_k))$ for any $j,k\in [r]$. This implies that each $I(D_i)$ is unimxed and
$n_j-\Ht\,(I(D_{j}))=n_k-\Ht\,(I(D_{k}))$, as wished.
	\end{proof}		

\begin{Corollary}\label{isolated vertices}
	Let $r\geq2$ be a positive integer and let $D:=D_1*\cdots*D_r$ be the join  of   weighted oriented graphs  $D_1,\ldots,D_r$, where $D_i$ consists of isolated vertices with the  vertex set  $V_i=\{x_{ij}\mid  j\in [n_i]\}$ and  $n_i=|V_i|$ for any  $i\in [r]$.
	Then
$I(D)$ is unmixed if and only if  $n_1=n_2=\cdots=n_r$.
 \end{Corollary}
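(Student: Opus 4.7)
The plan is to apply Corollary~\ref{unmixed} directly, after clarifying what its two hypotheses become in the isolated-vertices case.

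First I would observe that when $D_i$ consists solely of isolated vertices, the directed edge set $E(D_i)$ is empty, so the edge ideal $I(D_i)\subset S_i=\KK[x_{ij}:j\in[n_i]]$ is the zero ideal. In particular, $\height(I(D_i))=0$ for every $i\in[r]$, and the zero ideal is trivially unmixed (its unique associated prime is $(0)$ itself, of height $0$). Thus the ``each $I(D_i)$ is unmixed'' clause in Corollary~\ref{unmixed} holds automatically in our setting, and no extra work is needed to verify it.

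Next I would invoke Corollary~\ref{unmixed}: $I(D)$ is unmixed if and only if each $I(D_i)$ is unmixed and $n_j-\height(I(D_j))=n_k-\height(I(D_k))$ for all distinct $j,k\in[r]$. Substituting $\height(I(D_i))=0$ into the second equality collapses it to $n_j=n_k$ for all $j,k\in[r]$, which is exactly $n_1=n_2=\cdots=n_r$. This gives the claimed equivalence.

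I do not anticipate any serious obstacle here; the proof is essentially a direct specialization of Corollary~\ref{unmixed}, the only delicate point being to confirm that the zero ideal contributes no additional height restriction. As an independent sanity check, one could bypass Corollary~\ref{unmixed} and argue from Remark~\ref{edgeideal} together with Corollary~\ref{height}(a),(c): the minimal primes of $I(D)=(x_{ij}x_{k\ell}^{w_{k\ell}}:j\in[n_i],\ \ell\in[n_k],\ i<k)$ are readily seen to be $\widehat{I}_i=\sum_{j<i}I_j+\sum_{j>i}I_j^{w_j}$ for $i\in[r]$, with $\height(\widehat{I}_i)=\sum_{j\neq i}n_j$; requiring all these heights to coincide again forces $n_1=\cdots=n_r$, matching the outcome above.
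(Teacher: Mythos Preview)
Your proposal is correct and matches the paper's intended argument: the paper states Corollary~\ref{isolated vertices} without proof immediately after Corollary~\ref{unmixed}, clearly treating it as the direct specialization you describe, namely setting $I(D_i)=(0)$ so that $\height(I(D_i))=0$ and the unmixedness of each $I(D_i)$ is automatic. Your sanity check via Corollary~\ref{height} is a fine alternative and likewise aligns with the paper's framework.
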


\medskip
\section{Symbolic powers of the edge ideal of the join of graphs consisting of isolated vertices}
In this section, we study depth and  regularity of symbolic powers and  ordinary powers of the edge ideal of the join of weighted oriented graphs, where each graph  consists of isolated vertices.
For the edge ideal of the join of two weighted oriented graphs, we give the exact formulas for  depth and regularity of their powers and symbolic powers.
For the edge ideal of the join of $r(\ge 3)$ weighted oriented graphs, we provide the exact  depth formulas and also give the  upper bounds on   regularity of  symbolic powers.

 \begin{Lemma}\label{regular}{\em (\cite[Lemma 4.4]{BHT})}
 	Let $u_1,\ldots, u_m$ be a regular sequence of homogeneous polynomials in $S$ with  $\deg\,(u_1)=\cdots=\deg\,(u_1)=d$. Let $I=(u_1,\ldots, u_m)$. Then we have $\reg\,(I^t)=dt+(d-1)(m-1)$  for all $t\ge 1$.
 \end{Lemma}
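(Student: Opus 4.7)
The plan is to proceed by induction on $t$. For the base case $t=1$, since $u_1,\ldots,u_m$ is a regular sequence of degree $d$, the Koszul complex $K_\bullet(u_1,\ldots,u_m;S)$ is a minimal graded free resolution of $S/I$ whose top term is $S(-dm)$ in homological position $m$. Reading off the shifts gives $\reg(S/I)=m(d-1)$, and then by Lemma~\ref{quotient} one has $\reg(I)=m(d-1)+1=d+(d-1)(m-1)$, which matches the claimed formula at $t=1$.

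The key structural input for the inductive step is the classical fact that for a regular sequence the associated graded ring is polynomial: the $S/I$-algebra $\bigoplus_{t\geq 0}I^t/I^{t+1}$ is isomorphic to $(S/I)[T_1,\ldots,T_m]$ with $\deg T_i=d$. Consequently each $I^t/I^{t+1}$ is a free $S/I$-module on $\binom{m+t-1}{t}$ generators placed in degree $td$, and
\[
\reg(I^t/I^{t+1})=\reg(S/I)+td=m(d-1)+td.
\]
Assuming inductively that $\reg(I^t)=dt+(d-1)(m-1)$, a quick subtraction gives $\reg(I^t/I^{t+1})-\reg(I^t)=d-1$, so the two regularities are distinct whenever $d\geq 2$. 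Applying Lemma~\ref{exact}(2) to the short exact sequence $0\to I^{t+1}\to I^t\to I^t/I^{t+1}\to 0$, and invoking its equality clause (since $\reg(I^t)\neq\reg(I^t/I^{t+1})$), one obtains
\[
\reg(I^{t+1})=\max\{\reg(I^t),\,\reg(I^t/I^{t+1})+1\}=d(t+1)+(d-1)(m-1),
\]
advancing the induction. The remaining case $d=1$ reduces, after a linear change of variables, to $I=(x_1,\ldots,x_m)$, for which $\reg(I^t)=t=dt+(d-1)(m-1)$ by direct computation.

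The main obstacle is justifying the associated-graded isomorphism used in the second paragraph: it is the only nonformal input, and it is precisely what allows the inductive recursion to close. The equality clause of Lemma~\ref{exact}(2) is the other crucial ingredient, because without it only an upper bound on $\reg(I^{t+1})$ would follow, and establishing the matching lower bound would otherwise require writing down the minimal free resolution of $I^{t+1}$ explicitly (for instance via an Eagon--Northcott or Buchsbaum--Rim type complex built on the Koszul data of $u_1,\ldots,u_m$), which is substantially more technical. With both ingredients in hand, the regularity $\reg(I^t)$ telescopes to $dt+(d-1)(m-1)$ for every $t\geq 1$.
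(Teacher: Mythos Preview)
The paper does not supply its own proof of this lemma: it is quoted verbatim as \cite[Lemma~4.4]{BHT} and used as a black box, so there is no in-paper argument to compare against.

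Your proof is correct. The base case via the Koszul resolution and the inductive step via the freeness of $I^t/I^{t+1}$ over $S/I$ (a consequence of the associated graded ring of a regular sequence being polynomial) are both standard and sound, and your invocation of the equality clause in Lemma~\ref{exact}(2) is legitimate because, as you check, $\reg(I^t)$ and $\reg(I^t/I^{t+1})$ differ by $d-1\neq 0$ when $d\ge 2$. The separate treatment of $d=1$ is also fine. One stylistic remark: the ``main obstacle'' you flag (the associated-graded isomorphism) is in fact a classical textbook fact (see e.g.\ Matsumura or Bruns--Herzog), so you might cite it rather than present it as a potential gap; with that citation in place the argument is complete and self-contained within the tools the present paper already imports.
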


\medskip
 First, we consider the case $r=2$ and  each $D_i$ consisting of isolated vertices.

\begin{Theorem}\label{bipartitepower}
	Let $D:=D_1*D_2$ be the join  of  two weighted oriented graphs  $D_1$ and $D_2$, where $D_i$ consists of isolated vertices with  $V_i=\{x_{ij}\,|\, j\in [n_i]\}$,  $n_i=|V_i|$ for all  $i\in [2]$. Then, for all $t\ge 1$, we have
\begin{itemize}
		\item[(1)] $\depth\,(S/I(D)^t)=1$;
		\item[(2)] $\reg\,(S/I(D)^t)=\sum\limits_{x\in V}{w(x)}-|V|+1+(t-1)(w+1)$, where $w=max\{w(x) \mid  x\in V\}$ and  $V=V_1\sqcup V_2$.
	\end{itemize}
	\end{Theorem}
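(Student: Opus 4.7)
The plan is to write $I(D)$ as a product of ideals in disjoint variables and apply the tensor-product formulas of Lemma \ref{sum1}. Since every vertex of $D_1$ is a source of $D$, the paper's convention makes $w(x_{1j})=1$, so by Remark \ref{edgeideal},
\[
I(D) = I_1\cdot J,\qquad I_1 := (x_{1j}\mid j\in[n_1])\subset S_1 := \KK[x_{11},\dots,x_{1n_1}],\qquad J := (x_{2j}^{w_{2j}}\mid j\in[n_2])\subset S_2 := \KK[x_{21},\dots,x_{2n_2}].
\]
Therefore $I(D)^t = I_1^t\cdot J^t$, and Lemma \ref{sum1}(3,4) yields
$\reg(S/I(D)^t) = \reg(S_1/I_1^t) + \reg(S_2/J^t) + 1$ and
$\depth(S/I(D)^t) = \depth(S_1/I_1^t) + \depth(S_2/J^t) + 1$.

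I would then dispatch the two tensor factors individually. The ideal $I_1^t$ is a power of the graded maximal ideal of $S_1$, so $S_1/I_1^t$ is Artinian: $\depth(S_1/I_1^t)=0$ and $\reg(S_1/I_1^t)=t-1$. The crucial observation for the second factor is that $J^t$ contains $x_{2j}^{tw_{2j}}=(x_{2j}^{w_{2j}})^t$ for every $j$, so $S_2/J^t$ is also Artinian. This gives $\depth(S_2/J^t)=0$ immediately, and since $H_\mm^i(S_2/J^t)=0$ for all $i>0$, the regularity collapses to the socle degree
\[
\reg(S_2/J^t) = a_0(S_2/J^t) = \max\{\,|\alpha|\mid x_2^\alpha\notin J^t\,\}.
\]

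The remaining step is a short combinatorial maximization. Writing $\alpha_j = k_j w_{2j} + r_j$ with $0\le r_j < w_{2j}$, one checks that $x_2^\alpha\notin J^t$ iff $\sum_j k_j \le t-1$; the maximum of $|\alpha|=\sum_j k_j w_{2j}+\sum_j r_j$ is then achieved by taking $r_j=w_{2j}-1$ for every $j$ and concentrating all of $\sum_j k_j=t-1$ on the index $i^\ast$ with $w_{2,i^\ast}=w$, giving $\reg(S_2/J^t) = \sum_j w_{2j} - n_2 + (t-1)w$. Assembling these pieces via Lemma \ref{sum1} and using $\sum_{x\in V_1}w(x)=n_1$ and $|V|=n_1+n_2$ produces $\depth(S/I(D)^t) = 1$ and $\reg(S/I(D)^t) = \sum_{x\in V}w(x) - |V| + 1 + (t-1)(w+1)$, as desired. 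The only potential obstacle---computing the regularity of powers of the mixed-degree complete-intersection ideal $J$---evaporates once one notices the Artinianness of $S_2/J^t$, which sidesteps all resolution-level analysis.
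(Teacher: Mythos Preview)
Your argument is correct and follows the paper's overall strategy: factor $I(D)^t=I_1^t\cdot J^t$ with $I_1\subset S_1$ and $J\subset S_2$ in disjoint variable sets, then apply Lemma~\ref{sum1}(3),(4). The depth computation and the treatment of $\reg(S_1/I_1^t)$ match the paper.

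The genuine difference is in computing $\reg(S_2/J^t)$. The paper isolates this as a separate Lemma~\ref{colon}, proved by double induction on $n_2$ and $t$ via the exact sequence
\[
0\to (S_2/(J^t:x_{21}^{w_{21}}))(-w_{21})\to S_2/J^t\to S_2/(J^t,x_{21}^{w_{21}})\to 0,
\]
identifying the two outer terms with lower cases of the induction. You instead observe directly that $S_2/J^t$ is Artinian, so $\reg(S_2/J^t)=a_0(S_2/J^t)$ is simply the largest degree of a monomial not in $J^t$, and you compute this by the elementary combinatorial maximization $\sum_j\lfloor\alpha_j/w_{2j}\rfloor\le t-1$. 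Your route is shorter and avoids any exact-sequence bookkeeping; the paper's inductive lemma, on the other hand, fits the pattern used repeatedly elsewhere in the paper (e.g.\ Theorems~\ref{complete graphs} and~\ref{power3}) and so has some structural uniformity in context. One minor quibble: you call $a_0$ the ``socle degree,'' but what you actually compute (and what is needed) is the top nonzero degree of $S_2/J^t$; these coincide here, but the phrasing is slightly loose.
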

\begin{proof} Note that $I(D)^{t}=I^tJ^t$, where $I=(x_{11},\ldots, x_{1n_1})$, $J=(x_{21}^{w_{21}},\ldots, x_{2n_2}^{w_{2n_2}})$ and $\supp\,(I)\cap \supp\,(J)=\emptyset$.
Then, by Lemma \ref{sum1} (3) and (4), one has
\[
\depth\,(S/I(D)^t)=\depth\,(S/J^tI^t)=\depth\,(S_1/I^t)+\depth\,(S_2/J^t)+1
\]
and
\[
\reg\,(S/I(D)^t)=\reg\,(S/J^tI^t)=\reg\,(S_1/I^t)+\reg\,(S_2/J^t)+1
\]
Therefore, we obtain $\depth\,(S/I(D)^t)=1$, since both $I$ and $J$ are  $\mathfrak{m}_1$-primary and $\mathfrak{m}_2$-primary  in $S_1$ and $S_2$ respectively, where   $\mathfrak{m}_1=(x_{11},\ldots, x_{1n_1})$ and $\mathfrak{m}_2=(x_{21},\ldots, x_{2n_2})$. By Lemmas \ref{quotient} (1)  and  \ref{regular} and the following lemma, we obtain
\[
\reg\,(S/I(D)^t)=t+\reg\,(S_2/J^t)=\sum\limits_{x\in V}{w(x)}-|V|+1+(t-1)(w+1),
\]  where $w=max\{w(x) \mid  x\in V\}$ .
\end{proof}		

\begin{Lemma}\label{colon}
	Let  $J=(x_{21}^{w_{21}},\ldots, x_{2n_2}^{w_{2n_2}})$. Then, for any $t\ge 1$, we have
	\[
	\reg\,(S/J^t)=\sum\limits_{i=1}^{n_2}w_{2i}-n_2+(t-1)w
	\]
	where $w=max\{w_{2i} \mid  i \in [n_2]\}$.
\end{Lemma}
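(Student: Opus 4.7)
\textbf{Proof plan for Lemma \ref{colon}.}
Since $J\subset S_2=\KK[x_{21},\dots,x_{2n_2}]$ and adjoining the polynomial variables of $S_1$ does not alter regularity, it suffices to compute $\reg(S_2/J^t)$. The plan is to induct on $t$, using a short exact sequence that exploits the complete-intersection structure of $J$.

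For the base case $t=1$, since $x_{21},\dots,x_{2n_2}$ are pairwise distinct variables, $S_2/J$ factors as a tensor product $\bigotimes_{i=1}^{n_2}\KK[x_{2i}]/(x_{2i}^{w_{2i}})$. Applying Lemma \ref{sum1}(1) iteratively together with Lemma \ref{quotient}(2) on each factor gives $\reg(S_2/J)=\sum_{i=1}^{n_2}(w_{2i}-1)=\sum w_{2i}-n_2$, which is the asserted value at $t=1$.

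For $t\geq 2$, I would apply Lemma \ref{exact} to the short exact sequence
\[
0\longrightarrow J^{t-1}/J^t\longrightarrow S_2/J^t\longrightarrow S_2/J^{t-1}\longrightarrow 0.
\]
The main structural input is that $x_{21}^{w_{21}},\dots,x_{2n_2}^{w_{2n_2}}$ is a regular sequence in $S_2$, so the associated graded ring of $J$ is a polynomial ring $(S_2/J)[T_1,\dots,T_{n_2}]$ with $\deg T_i=w_{2i}$. Consequently, $J^{t-1}/J^t$ is a free graded $S_2/J$-module of the form
\[
J^{t-1}/J^t\;\cong\;\bigoplus_{a\in\NN^{n_2},\,|a|=t-1}(S_2/J)\bigl(-\textstyle\sum_{i}a_iw_{2i}\bigr),
\]
whose regularity is $\reg(S_2/J)+\max_{|a|=t-1}\sum a_iw_{2i}=\sum w_{2i}-n_2+(t-1)w$, the maximum shift being attained by placing the full weight on an index achieving $w_{2i}=w$. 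Combining this with the inductive value $\reg(S_2/J^{t-1})=\sum w_{2i}-n_2+(t-2)w$, Lemma \ref{exact}(1) yields the upper bound $\reg(S_2/J^t)\leq\sum w_{2i}-n_2+(t-1)w$.

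For equality, note that $\reg(S_2/J^{t-1})\neq\reg(J^{t-1}/J^t)-1$ precisely when $w\geq 2$, so the "equality holds if" clause of Lemma \ref{exact}(1) gives the desired formula in that range. The remaining case $w=1$ forces all $w_{2i}=1$, so $J=\mathfrak{m}_2$; then Lemma \ref{regular} (with $d=1$ and $m=n_2$) gives $\reg(J^t)=t$, hence $\reg(S_2/J^t)=t-1$, matching the formula. The main obstacle is justifying the free $S_2/J$-module structure of $J^{t-1}/J^t$ with the stated graded shifts; this is a classical consequence of the Koszul complex on a regular sequence being acyclic, and is the one input not already recorded in the preliminaries. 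Everything else is a routine induction built out of Lemmas \ref{quotient}, \ref{exact}, and \ref{sum1}.
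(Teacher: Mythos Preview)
Your argument is correct, but it takes a genuinely different route from the paper's. The paper proceeds by \emph{double} induction on $n_2$ and $t$, assuming without loss of generality that $w=w_{2n_2}$ and using the colon/sum exact sequence
\[
0\longrightarrow (S/J^t:x_{21}^{w_{21}})(-w_{21})\longrightarrow S/J^t\longrightarrow S/(J^t,x_{21}^{w_{21}})\longrightarrow 0,
\]
together with the elementary identities $J^t:x_{21}^{w_{21}}=J^{t-1}$ and $(J^t,x_{21}^{w_{21}})=(x_{22}^{w_{22}},\dots,x_{2n_2}^{w_{2n_2}})^t+(x_{21}^{w_{21}})$; the inductive hypotheses on $t$ and on $n_2$ then feed directly into Lemma \ref{exact}(1), and the ``equality holds if'' clause is satisfied automatically (since $w\geq w_{21}>w_{21}-1$), so no separate treatment of $w=1$ is needed. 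Your approach instead does a single induction on $t$ via $0\to J^{t-1}/J^t\to S_2/J^t\to S_2/J^{t-1}\to 0$, invoking the associated-graded description of a complete intersection to identify $J^{t-1}/J^t$ as a free $S_2/J$-module with explicit shifts. This is cleaner conceptually and avoids the double induction, but it imports a structural fact (the graded isomorphism $\gr_J(S_2)\cong (S_2/J)[T_1,\dots,T_{n_2}]$) that lies outside the paper's recorded preliminaries, and it forces you to handle the boundary case $w=1$ separately. The paper's route stays entirely within the monomial-ideal toolkit of Section~2.
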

\begin{proof}
	We apply induction on $n_2$ and $t$. Without loss of generality, we can assume that $w=w_{2n_2}$. The case $n_{2}=1$ or $t=1$ is  obvious by Lemmas \ref{quotient} (2) and
 \ref{sum1} (1).
	
	Now, we assume that  $n_2, t\geq 2$. By some simple calculations, we obtain that
 $J^{t}:x_{21}^{w_{21}}=J^{t-1}$ and $(J^t,x_{21}^{w_{21}})=(x_{22}^{w_{22}},\ldots, x_{2n_2}^{w_{2n_2}})^t+(x_{21}^{w_{21}})$.
	By Lemma \ref{sum1} (1) and  inductive hypothesis, we obtain
	\begin{eqnarray*}
	\reg\,(S/(J^t,x_{21}^{w_{21}}))&=&\reg\,(S/(x_{22}^{w_{22}},\ldots, x_{2n_2}^{w_{2n_2}})^t)+{w_{21}}-1\\
	&=&\sum\limits_{i=2}^{n_2}w_{2i}-(n_2-1)+(t-1)w+w_{21}-1\\
	&=&\sum\limits_{i=1}^{n_2}w_{2i}-n_2+(t-1)w
\end{eqnarray*}
and
	\[
	\reg\,(S/(J^t:x_{21}^{w_{21}}))=\reg\,(S/J^{t-1})=\sum\limits_{i=1}^{n_2}w_{2i}-n_2+(t-2)w.
	\]
The desired result  holds by Lemma \ref{exact} (1) and 	the following exact sequence
\[
0\longrightarrow (S/J^t:x_{21}^{w_{21}})(-w_{21})\longrightarrow S/J^t\longrightarrow S/(J^t,x_{21}^{w_{21}})\longrightarrow 0. \qedhere
\]
\end{proof}		

Next,  we consider the case $r\ge 3$ and  each $D_i$ consisting of isolated vertices.
\begin{Lemma}\label{decompose}
	Let $r\geq3$ be an integer, and let $D:=D_1*\cdots*D_r$ be the join  of   weighted oriented graphs  $D_1,\ldots,D_r$, where $D_i$ consists of isolated vertices with  $V_i=\{x_{ij}\,|\,  j\in [n_i]\}$ and  $n_i=|V_i|$ for any  $i\in [r]$.  Assume that $I_i= (x_{ij}\,|\,j\in[n_i])$,  $I_i^{w_i}=(x_{ij}^{w_{ij}}\,|\,j\in[n_i])$ and $\widehat{I}_i=\sum\limits_{j=1}^{i-1}{I_j}+\sum\limits_{j=i+1}^{r}{I_j^{w_j}}$.
	Then
	\begin{itemize}
		\item[(1)] $I(D_2*\cdots*D_r)=\sum\limits_{i=2}^{r-1}\sum\limits_{j=i+1}^{r}{I_iI_j^{w_j}}$;
		\item[(2)] Let $L=\bigcap\limits_{i=2}^{r}\widehat{I}_i$, then  $L=I_1+I(D_2*\cdots*D_r)$.
	\end{itemize}	
\end{Lemma}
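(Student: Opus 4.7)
The plan is to derive both parts directly from Remark~\ref{edgeideal} together with the intermediate identity established at the beginning of the proof of Theorem~\ref{rdecompose}.

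For part (1), since each $D_i$ consists of isolated vertices, $I(D_i)=(0)$ for $i=2,\dots,r$, so Remark~\ref{edgeideal} applied to the join $D_2*\cdots*D_r$ yields
\[
I(D_2*\cdots*D_r)=(x_{ij}x_{k\ell}^{w_{k\ell}}\mid j\in[n_i],\,\ell\in[n_k],\,2\le i<k\le r).
\]
Each such generator factors as a generator of $I_i$ times a generator of $I_k^{w_k}$, and conversely every generator of $I_iI_k^{w_k}$ has this form; hence the right-hand side equals $\sum_{i=2}^{r-1}\sum_{j=i+1}^{r}I_iI_j^{w_j}$, as desired.

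For part (2), set $\widetilde{I}_i=\sum_{j=2}^{i-1}I_j+\sum_{j=i+1}^{r}I_j^{w_j}$, so that $\widehat{I}_i=I_1+\widetilde{I}_i$ for every $i\ge 2$ with $\supp(I_1)\cap\supp(\widetilde{I}_i)=\emptyset$. The first step is to establish the ``distributive'' identity
\[
L=\bigcap_{i=2}^{r}\left(I_1+\widetilde{I}_i\right)=I_1+\bigcap_{i=2}^{r}\widetilde{I}_i,
\]
which I would verify by a monomial-generator argument. The inclusion $\supseteq$ is immediate since $I_1\subseteq\widehat{I}_i$ and $\widetilde{I}_i\subseteq\widehat{I}_i$ for every $i\ge 2$, which gives $I_1\subseteq L$ and $\bigcap_{i=2}^{r}\widetilde{I}_i\subseteq L$. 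For $\subseteq$, pick a minimal monomial generator $m$ of $L$ and split on whether $\supp(m)$ meets $V_1$: if so, then $m$ is divisible by some $x_{1j}\in\mathcal{G}(I_1)$, so $m\in I_1$; otherwise $\supp(m)\subseteq V_2\cup\cdots\cup V_r$, and in each expression $m\in I_1+\widetilde{I}_i$ no generator of $I_1$ can divide $m$, forcing $m$ to be divisible by some generator of $\widetilde{I}_i$, so $m\in\bigcap_{i=2}^{r}\widetilde{I}_i$.

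The second step is to identify $\bigcap_{i=2}^{r}\widetilde{I}_i=I(D_2*\cdots*D_r)$. This is exactly the intermediate identity $I(D_1*\cdots*D_r)=\bigcap_i(\widehat{I}_i+I(D_i))$ from the opening inductive argument in the proof of Theorem~\ref{rdecompose}, applied to the join $D_2*\cdots*D_r$ in which every $I(D_i)$ is the zero ideal. Combining the two steps gives $L=I_1+I(D_2*\cdots*D_r)$. The only point that requires any care is the monomial splitting in the first step, which relies crucially on the disjointness $\supp(I_1)\cap\supp(\widetilde{I}_i)=\emptyset$ for $i\ge 2$; everything else is a transcription of results already established.
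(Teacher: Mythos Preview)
Your proposal is correct and follows essentially the same route as the paper: part~(1) is read off from Remark~\ref{edgeideal}, and part~(2) is obtained by pulling $I_1$ out of the intersection and then invoking Theorem~\ref{rdecompose} (with all $I(D_i)=0$) on $D_2*\cdots*D_r$. The only difference is cosmetic: you justify the distributive step $\bigcap_i(I_1+\widetilde{I}_i)=I_1+\bigcap_i\widetilde{I}_i$ directly by a monomial-support argument, whereas the paper phrases the same extraction as an induction on~$r$.
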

\begin{proof}  (1) follows  from Remark \ref{edgeideal}.

	(2) We apply induction on  $r$. If $r=3$, one has
	$L=\widehat{I}_2\cap \widehat{I}_3= (I_1+I_3^{w_3})\cap(I_1+I_2)=I_1+I_2I_3^{w_3}$, as desired.
	If $r\ge 4$, then, by induction, we have
	\begin{align*}
		\bigcap\limits_{i=2}^{r-1}\widehat{I}_i
		=&(I_1+I_3^{w_3}+\cdots+ I_{r-1}^{w_{r-1}})\cap(I_1+I_2+I_4^{w_4}+\cdots+I_{r-1}^{w_{r-1}})\cap\cdots\cap(I_1+\cdots+I_{r-2})\\
		=&I_1+\sum\limits_{i=2}^{r-2}\sum\limits_{j=i+1}^{r-1}{I_iI_j^{w_j}}.
	\end{align*}
 By Theorem   \ref{rdecompose} in the case of all $I(D_i)=0$,  we obtain
	\begin{align*}
	L&=\bigcap\limits_{i=2}^{r}\widehat{I}_i=(I_1+I_3^{w_3}+\cdots+ I_{r}^{w_{r}})\cap(I_1+I_2+I_4^{w_4}+\cdots+I_{r}^{w_{r}})\cap\cdots\cap(I_1+\cdots+I_{r-1})\\
	&=I_1+(I_3^{w_3}+\cdots+ I_{r}^{w_{r}})\cap(I_2+I_4^{w_4}+\cdots+I_{r}^{w_{r}})\cap\cdots\cap(I_2+\cdots+I_{r-1})\\
	&=I_1+I(D_2*\cdots*D_r). 
\end{align*}
\end{proof}	

\begin{Theorem}\label{regularity}
	Let  $r\geq3$ be an integer, and let  $D:=D_1*\cdots*D_r$ be the join  of   weighted oriented graphs  $D_1,\ldots,D_r$ as in Lemma \ref{decompose}.
	Then
	\begin{itemize}
		\item[(1)] 		$\depth\,(S/I(D))=1$;
		\item[(2)] $\reg\,(S/I(D))=\sum\limits_{x\in V}{w(x)}-|V|+1$, where $V=V_1\sqcup\cdots\sqcup V_r$.
	\end{itemize}
\end{Theorem}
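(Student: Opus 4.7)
The plan is to induct on $r$, with the base case $r = 2$ supplied by Theorem \ref{bipartitepower} (taking $t = 1$). For the inductive step ($r \geq 3$), I invoke Lemma \ref{decompose}(2), which writes
$$L := \bigcap_{i=2}^r \widehat{I}_i = I_1 + I(D_2 * \cdots * D_r);$$
by Theorem \ref{rdecompose} applied in the isolated-vertex case (where each $I(D_i) = 0$), we have $I(D) = \widehat{I}_1 \cap L$. This decomposition yields the Mayer--Vietoris short exact sequence
$$0 \longrightarrow S/I(D) \longrightarrow S/\widehat{I}_1 \oplus S/L \longrightarrow S/(\widehat{I}_1 + L) \longrightarrow 0,$$
which will be the workhorse of the argument.

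Next I compute depth and regularity of the three outer modules. The ideal $\widehat{I}_1 = \sum_{j=2}^r I_j^{w_j}$ is a monomial complete intersection of pure powers supported in the pairwise disjoint variable sets $V_2, \ldots, V_r$, so iterated application of Lemma \ref{sum1} gives $\reg(S/\widehat{I}_1) = \sum_{x \in V}(w(x)-1)$ (using that vertices in $V_1$ are sources and hence have weight $1$) and $\depth(S/\widehat{I}_1) = n_1$. For $S/L$, the disjointness $\supp(I_1) \cap \supp(I(D_2 * \cdots * D_r)) = \emptyset$ combined with Lemma \ref{sum1}(1)(2) reduces the computation to $D_2 * \cdots * D_r$, and the inductive hypothesis (applied on $V' = V_2 \sqcup \cdots \sqcup V_r$) gives $\reg(S/L) = \sum_{x \in V}(w(x)-1) + 1$ and $\depth(S/L) = 0 + 1 = 1$. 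Finally, since $I(D_2 * \cdots * D_r) \subseteq \widehat{I}_1$, we have $\widehat{I}_1 + L = I_1 + \widehat{I}_1$, which is a complete intersection of pure powers of all the variables; hence $\reg(S/(\widehat{I}_1 + L)) = \sum_{x \in V}(w(x)-1)$ and $\depth(S/(\widehat{I}_1 + L)) = 0$.

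Combining via Lemma \ref{exact}: the middle term $S/\widehat{I}_1 \oplus S/L$ has regularity $\sum_{x \in V}(w(x)-1) + 1$ (coming from the $S/L$ summand) and depth $\min\{n_1, 1\} = 1$, while the right-hand term has regularity $\sum_{x \in V}(w(x)-1)$ and depth $0$. Both pairs differ (by exactly $1$), so the equality clauses in Lemma \ref{exact}(2) and (4) apply and deliver
$$\reg(S/I(D)) = \sum_{x \in V}(w(x)-1) + 1 = \sum_{x \in V} w(x) - |V| + 1, \qquad \depth(S/I(D)) = 1,$$
as required. The main conceptual step is choosing the decomposition $I(D) = \widehat{I}_1 \cap L$ from Lemma \ref{decompose}(2): it isolates $D_1$ while keeping the remaining join $D_2 * \cdots * D_r$ intact as a single block, which is what allows the induction on $r$ to feed cleanly through $S/L$. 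After that, only standard complete-intersection computations and the Mayer--Vietoris bookkeeping from Lemma \ref{exact} are needed.
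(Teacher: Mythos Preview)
Your proof is correct and follows essentially the same approach as the paper: both induct on $r$ with base case Theorem \ref{bipartitepower}, decompose $I(D) = \widehat{I}_1 \cap L$ via Theorem \ref{rdecompose} and Lemma \ref{decompose}(2), compute the invariants of $S/\widehat{I}_1$, $S/L$, and $S/(\widehat{I}_1 + L)$ by Lemma \ref{sum1} and induction, and conclude via the Mayer--Vietoris sequence together with the equality clauses of Lemma \ref{exact}(2),(4). The paper's argument is identical up to writing $K$ for $\widehat{I}_1$.
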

\begin{proof} We apply induction on  $r$  with the  $r=2$  case verified in  Theorem \ref{bipartitepower}. Now, assume that $r\geq 3$. By Theorem   \ref{rdecompose} in the case of all $I(D_i)=0$,  one has
$I(D)=\bigcap\limits_{i=1}^r{\widehat I_i}=K\cap L$, where  $K=\widehat I_1=\sum\limits_{j=2}^{r}{I_j^{w_j}}$ and $L=\bigcap\limits_{i=2}^{r}\widehat{I}_i=I_1+I(D_2*\cdots*D_r)$ by Lemma \ref{decompose} (2).
Note that $\supp\,(I_1)\cap \supp\,(I(D_2*\cdots*D_r))=\emptyset$, thus, by Lemma \ref{sum1} (1), (2) and the induction,
 we have
\begin{align*}
\depth\,(S/L)&=\depth\,(S_1/I_1)+\depth\,(S_2/I(D_2*\cdots*D_r))=0+1=1,\\
\reg\,(S/L)&=\reg\,(S_1/I_1)+\reg\,(S_2/I(D_2*\cdots*D_r))=\sum\limits_{x\in V\backslash V_1}{w(x)}-| V\backslash V_1|+1\\
&=\sum\limits_{x\in V}{w(x)}-|V|+1
 \end{align*}	
where the last equality holds because the
weight of each vertex in $V_1$ is one, and
 $S_1=k[x_{11}, \ldots,x_{1n_1}]$ and $S_2=k[x_{21},\ldots,x_{2n_2},\ldots,x_{r1},\ldots, x_{rn_r}]$.
Moreover, we also have
\begin{align*}
	\depth\,(S/K)&=n_1+\depth\,(S_2/\sum\limits_{j=2}^{r}{I_j^{w_j}})=n_1,\\
	\reg\,(S/K)&=\reg\,(K)-1=\sum\limits_{i=2}^ r{\reg\,(I_i^{w_i})}-(r-1)\\
	&=\sum\limits_{i=2}^ r(\sum\limits_{j=1}^{ n_i}w_{ij}-| V_i|+1)-(r-1)=\sum\limits_{x\in V\backslash{V_1}}{w(x)}-| V\backslash{V_1}|\\
	&=\sum\limits_{x\in V}{w(x)}-|V|.
\end{align*}
Notice that $K+L=\sum\limits_{j=2}^{r}{I_j^{w_j}}+I_1+I(D_2*\cdots*D_r)=I_1+\sum\limits_{j=2}^{r}{I_j^{w_j}}$, it follows that  $\depth\,(S/(K,L))=0$
 and $\reg\,(S/(K,L))=\sum\limits_{x\in V}{w(x)}-|V|$.
The desired result holds by Lemma \ref{exact} (2), (4) and the following exact sequence
\[
0\longrightarrow S/K\cap L\longrightarrow S/K\oplus S/L\longrightarrow S/(K,L)\longrightarrow 0.\qedhere
\]
\end{proof}

Let $G=(V,E)$ be a simple graph. If $V=\bigsqcup\limits_{\ell=1}^{r}V_i$ is the disjoint union of $r$ subsets ($V_\ell$ is called the $\ell^{th}$ part, and we set $a_\ell=|V_\ell|$), and if
\[
E=\{\{x,y\}\mid x \text{\ and\ } y  \text{\ are not in the same part}\}.
\]
then $G$ is called a complete $r$-partite graph, denoted by $K_{a_1,a_2,\ldots,a_r}$.
In particular, if $a_1=a_2=\cdots =a_r=1$, then  $G$ is called a complete graph.

\medskip
An immediate consequence of the above theorem is the following corollary.
\begin{Corollary}\label{rpartitepower2}
Let $G=K_{a_1,a_2,\ldots,a_r}$ {\em (resp. $G=K_{n}$)} be  a complete $r$-partite graph {\em (resp. a complete  graph)}, and let $I(G)$  the edge ideal of $G$, then
\[
\depth\,(S/I(G))=\reg\,(S/I(G))=1.
\]
In particular,  $I(G)$ has a linear resolution.
\end{Corollary}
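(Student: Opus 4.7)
My plan is to recognize $K_{a_1,\ldots,a_r}$ as the underlying graph of the join $D_1\ast\cdots\ast D_r$ studied in the previous theorems, where each $D_i$ consists of $a_i$ isolated vertices. Since $G$ is an ordinary (unweighted) simple graph, assigning weight $w(x)=1$ to every vertex gives $I(D)=I(G)$ by Remark \ref{edgeideal}, so the conclusions of Theorem \ref{bipartitepower} (when $r=2$, taking $t=1$) and Theorem \ref{regularity} (when $r\ge 3$) can be applied directly.

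First I would dispose of the depth assertion: both theorems yield $\depth(S/I(D))=1$ with no hypothesis on the weights, so $\depth(S/I(G))=1$ follows immediately once the identification $I(D)=I(G)$ is made. For the regularity, I plug the constant weight $w(x)\equiv 1$ into the formula
\[
\reg(S/I(D))=\sum_{x\in V}w(x)-|V|+1,
\]
which collapses to $|V|-|V|+1=1$; the $r=2$ case is obtained from Theorem \ref{bipartitepower} by the same specialization. The complete graph case $G=K_n$ is then just the instance $a_1=\cdots=a_n=1$ (requiring $n\ge 2$ so that edges exist), so no separate argument is needed.

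Finally, for the ``linear resolution'' assertion, I would invoke Lemma \ref{quotient}(1) to obtain $\reg(I(G))=\reg(S/I(G))+1=2$. Because $I(G)$ is generated entirely in degree $2$, having regularity $2$ is equivalent to admitting a linear (in fact $2$-linear) resolution, completing the corollary.

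I do not anticipate any real obstacle: the content is entirely a specialization of the theorems already proved, and the only bookkeeping is to verify that the join construction of Definition \ref{join} with isolated, unit-weighted vertex sets reproduces the complete multipartite graph on the prescribed parts. This is immediate from the edge-set description in Definition \ref{join} together with Remark \ref{edgeideal}.
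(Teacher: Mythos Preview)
Your proposal is correct and follows essentially the same approach as the paper: both reduce the corollary to Theorem \ref{regularity} (together with Theorem \ref{bipartitepower} for the $r=2$ case) by specializing all weights to $1$. You simply spell out a few more details than the paper does, in particular the linear-resolution step via $\reg(I(G))=2$, which the paper leaves implicit.
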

\begin{proof}
This is a direct consequence of the above theorem in the case of  all vertices of $D_i$ having trivial weights.
\end{proof}

In the following,  we provide  a technical lemma, which is useful to prove our main result of this section.

\begin{Lemma}\label{symbolic sum}
	Let $\KK$ be a field of  $\Char\,(\KK)=0$, and let $S=\KK[x_{1},\dots,x_{m},y_1,\dots,y_{n}]$ be a polynomial ring over $\KK$ and $I\subset (x_{1},\dots,x_{m})^2$  a monomial ideal in $\KK[x_{1},\dots,x_{m}]$. Then, for all $t\ge 1$, we have
	\[
	\reg\,((I+(y_1^{w(y_1)},\dots,y_{n}^{w(y_n)}))^{(t)}+(x_{1},\dots,x_{m}) ^{(t)})\\ \leq\sum\limits_{i=1}^n{w(y_i)}-n+1+(t-1)(w+1)
	\]
	where $w=\max\{w(y_i) \,|\, i\in [n]\}$.
\end{Lemma}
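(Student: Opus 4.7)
The plan is to establish a two-parameter refinement and read off the lemma at the diagonal. Set $J=(x_{1},\dots,x_{m})$ and $K=(y_{1}^{w(y_{1})},\dots,y_{n}^{w(y_{n})})$, and for $1\le s\le t$ put $L_{s,t}:=(I+K)^{(s)}+J^{t}$. I will prove, by nested induction (on $n$ outside, on $s$ inside), the refined bound
\[
\reg(S/L_{s,t})\le \sum_{i=1}^{n}w(y_{i})-n+(s-1)(w+1)+(t-s).
\]
Specializing to $s=t$ recovers the lemma, since $\reg(L_{t,t})=\reg(S/L_{t,t})+1$ then gives precisely the claimed bound.

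The engine of the induction is the short exact sequence
\[
0\to (S/(L_{s,t}:y_{n}^{w(y_{n})}))(-w(y_{n}))\to S/L_{s,t}\to S/(L_{s,t},y_{n}^{w(y_{n})})\to 0
\]
combined with Lemma~\ref{exact}(1). The crucial identity
\[
(I+K)^{(s)}=\sum_{i=0}^{s}(I+K')^{(s-i)}(y_{n}^{w(y_{n})})^{i},
\]
with $K'=(y_{1}^{w(y_{1})},\dots,y_{n-1}^{w(y_{n-1})})$, is the formula for symbolic powers of a sum in disjoint variables (this underlies Lemma~\ref{sum2} and uses $\chara(\KK)=0$). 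Since colon and sum commute on monomial ideals, this identity yields the clean colon formula $L_{s,t}:y_{n}^{w(y_{n})}=L_{s-1,t}$ and the quotient formula $(L_{s,t},y_{n}^{w(y_{n})})=\widetilde L_{s,t}+(y_{n}^{w(y_{n})})$, where $\widetilde L_{s,t}:=(I+K')^{(s)}+J^{t}$ lives in $S'=\KK[x_{1},\dots,x_{m},y_{1},\dots,y_{n-1}]$; Lemma~\ref{sum1}(1) then supplies $\reg(S/(L_{s,t},y_{n}^{w(y_{n})}))=\reg(S'/\widetilde L_{s,t})+(w(y_{n})-1)$.

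For the base case $s=1$, the ideal $L_{1,t}=I+K+J^{t}$ splits with disjoint $x$- and $y$-supports, so Lemma~\ref{sum1}(1) yields $\reg(S/L_{1,t})=\reg(S_{1}/(I+J^{t}))+\reg(S_{2}/K)$. Because $I+J^{t}\supseteq J^{t}$, the ring $S_{1}/(I+J^{t})$ is a quotient of $S_{1}/J^{t}$, so $\reg(S_{1}/(I+J^{t}))\le t-1$; combined with $\reg(S_{2}/K)=\sum w(y_{i})-n$ this matches the claimed bound at $s=1$. The base case $n=1$ is handled by the $s$-induction alone, the quotient term being $\reg(S_{1}/(I^{(s)}+J^{t}))+(w(y_{1})-1)\le (t-1)+(w(y_{1})-1)$ by the same quotient-of-$S_1/J^t$ argument. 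In the inductive step, the shifted colon contributes at most $\sum w(y_{i})-n+(s-2)(w+1)+(t-s+1)+w(y_{n})$, which lies below the target bound because $w(y_{n})\le w$ forces $w(y_{n})+1\le w+1$; the quotient contributes at most $\sum_{i<n}w(y_{i})-(n-1)+(s-1)(w'+1)+(t-s)+(w(y_{n})-1)=\sum w(y_{i})-n+(s-1)(w'+1)+(t-s)$ with $w'=\max_{i<n}w(y_{i})\le w$, again below the target. Taking the maximum via Lemma~\ref{exact}(1) closes the induction.

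The main obstacle is locating the correct refined bound. A naive induction on $t$ alone fails because the identity $L_{s,t}:y_{n}^{w(y_{n})}=L_{s-1,t}$ decouples the two indices: the colon lowers the first index $s$ but leaves the exponent $t$ on $J$ untouched, so any single-parameter hypothesis cannot absorb the mismatch. The interpolating weight $(s-1)(w+1)+(t-s)$ is engineered so that the $s=1$ case reduces to the easy K\"unneth-style estimate, the $s=t$ case reduces to the sought bound, and the recursion closes with equality precisely when $w(y_{n})=w$, which is the tightest margin the induction permits.
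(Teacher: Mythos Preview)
Your argument is correct but takes a genuinely different route from the paper's. The paper observes directly that the ideal $L:=(I+K)^{(t)}+J^{t}$ is $\mathfrak m$-primary, so $\reg(L)=\max\{j+1:(S/L)_j\neq 0\}$, and then proves by an elementary pigeonhole induction on $n$ and $t$ that every monomial $f=\prod x_i^{a_i}\prod y_j^{b_j}$ of degree at least $A=\sum w(y_i)-n+1+(t-1)(w+1)$ lies in $L$: if $\sum a_i\ge t$ then $f\in J^t$, while if $\sum a_i\le t-1$ then the $y$-part already lies in $K^{t}\subseteq (I+K)^{(t)}$. No exact sequences, no binomial expansion of symbolic powers, no auxiliary parameter.

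Your approach instead builds the two-parameter family $L_{s,t}=(I+K)^{(s)}+J^{t}$, invokes the H\`a--Nguyen--Trung--Trung binomial formula $(I+K)^{(s)}=\sum_{i}(I+K')^{(s-i)}(y_n^{w(y_n)})^{i}$ to compute colons and quotients cleanly, and runs a nested $(n,s)$-induction through the usual short exact sequence. This is more machinery but stays within the exact-sequence methodology used elsewhere in the paper and yields as a byproduct the refined off-diagonal bound $\reg(S/L_{s,t})\le \sum w(y_i)-n+(s-1)(w+1)+(t-s)$. One remark: your claim ``$S_1/(I+J^t)$ is a quotient of $S_1/J^t$, so $\reg\le t-1$'' is correct only because both ideals are $\mathfrak m_1$-primary (regularity equals top socle degree for Artinian quotients); in general containment of ideals does not force a regularity inequality, so it would be worth making that explicit.
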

\begin{proof}
	Let $K=(I+(y_1^{w(y_1)},\dots,y_{n}^{w(y_n)}))^{(t)}+(x_{1},\dots,x_{m}) ^{(t)}$. Since $x_i^t,y_j^{tw(y_j)}\in K$ for all $i\in[m]$ and $j\in[n]$, and $K$ is $\mathfrak{m}$-primary, where $\mathfrak{m}=(x_{1},\dots,x_{m},y_1,\dots,y_{n})$ is a maximal graded ideal in $S$. Therefore,
	\[
	\reg\,(K)=\max\{j+1:(S/K)_j\neq0\}.
	\]
	Let $A=\sum\limits_{i=1}^n{w(y_i)}-n+1+(t-1)(w+1)$. Without loss of generality, we can assume that $w=w(y_n)$. In order to get the desired assertions, it is sufficient to prove that $(S/K)_j=0$ for all $j\geq A$.
	That is, it is enough to prove that  $f\in K$ for any monomial $f$ with  $\deg\,(f)\geq A$.
	
	Indeed, let $f=(\prod\limits_{i=1}^mx_i^{a_i})(\prod\limits_{j=1}^ny_j^{b_j})$ with $\sum\limits_{i=1}^ma_i+\sum\limits_{j=1}^nb_j\ge A$. We consider the following two cases:
	
	(1) If $\sum\limits_{i=1}^ma_i\geq t$, then $f\in (x_{1},\dots,x_{m})^t=(x_{1},\dots,x_{m})^{(t)}\subseteq K$.	
	
	(2) If $\sum\limits_{i=1}^ma_i\leq t-1$, then $\sum\limits_{j=1}^nb_j\geq A-t+1=\sum\limits_{i=1}^n{w(y_i)}-n+1+(t-1)w$. In this case, let $f_1=\prod\limits_{j=1}^ny_j^{b_j}$, then  $\deg\,(f_1)\geq \sum\limits_{i=1}^n{w(y_i)}-n+1+(t-1)w$.
	
	Next, we prove that $f_1\in (y_1^{w(y_1)},\dots,y_{n}^{w(y_n)})^t$ by induction on  $t$ and $n$, thus  $f_1\in K$, since $(y_1^{w(y_1)},\dots,y_{n}^{w(y_n)})^t\subseteq (I+(y_1^{w(y_1)},\dots,y_{n}^{w(y_n)}))^{(t)}$.
	
	The case  $n=1$ is trivial. If $t=1$, then $\deg\,(f_1)=\sum\limits_{j=1}^n{b_j}\geq \sum\limits_{i=1}^n{w(y_i)}-n+1$.
	In this case, there exists some $i\in [n]$ such that $b_i\geq w(y_i)$, which implies that $f_1\in  (y_1^{w(y_1)},\dots,y_{n}^{w(y_n)})$, as wished.
	On the contrary, if $ b_j\leq w(y_j)-1$ for any $j\in [n]$, then $\deg\,(f_1)\leq \sum\limits_{i=1}^n(w(y_i)-1)=\sum\limits_{i=1}^nw(y_i)-n$,  contradicting  with the assumption that $\deg\,(f_1)\geq \sum\limits_{i=1}^n{w(y_i)}-n+1$.
	
	Now,  we assume that $n,t\geq 2$. By similar arguments as the case $t=1$, we can obtain that
	there exists some $i\in [n]$ such that $b_i\geq w(y_i)$. In this case, we set $b_i=pw(y_i)+q$ with $p\geq 1$, $0\leq q\leq w(y_i)-1$.  We divide into  the following two cases:
	
	(i) If $p\geq t$, then $f_1\in (y_1^{w(y_1)},\dots,y_{n}^{w(y_n)})^t$.
	
	(ii) If $p\leq t-1$, then
	\begin{align*}
		\deg\,(f_1/y_i^{b_i})& \geq \sum\limits_{i=1}^n{w(y_i)}-n+1+(t-1)w-b_i\\
		& \ge \sum\limits_{i=1}^n{w(y_i)}-n+1+(t-1)w-(pw(y_i)+w(y_i)-1)\\
&= \sum\limits_{i=1}^n{w(y_i)}-n+2+(t-1)w-(p+1)w(y_i)\\
& \ge \sum\limits_{i=1}^{n-1}{w(y_i)}-(n-1)+1+(t-1-p)w
	\end{align*}
	where the last  inequality holds because of $w=\max\{w(y_i) \,|\, i\in [n]\}$ and   $w=w(y_n)$.
Hence, by  induction, we have  $f_1/y_i^{b_i}\in (y_1^{w(y_1)},\dots,\widehat {y_i^{w(y_i)}},\dots,y_n^{w(y_n)})^{t-p}$,
	where $\widehat {y_i^{w(y_i)}}$ denotes the element $y_i^{w(y_i)}$ being omitted
	from $(y_1^{w(y_1)},\dots,y_i^{w(y_i)},\dots,y_n^{w(y_n)})^{t-p}$.
	It follows that $f_1\in (y_1^{w(y_1)},\dots,\widehat{y_i^{w(y_i)}},\dots,y_n^{w(y_n)})^{t-p}(y_i^{w(y_i)})^p\subseteq(y_1^{w(y_1)},\dots,y_{n}^{w(y_n)})^t$.
	And this concludes the proof.
\end{proof}

\begin{Theorem}\label{symbolic isolate1}
	Let $D:=D_1*D_2$ be the join  of two  weighted oriented graphs  $D_1$ and $D_2$, where $D_i$ consists of isolated vertices with   $V_i=\{x_{ij}\,|\,  j\in [n_i]\}$ for   $i=1,2$.
	Then, for all $t\ge 1$, we have
\begin{itemize}
	\item[(1)] 		$\depth\,(S/I(D)^{(t)})=1$;
	\item[(2)]  $\reg\,(S/I(D)^{(t)})=\sum\limits_{x\in V}{w(x)}-|V|+1+(t-1)(w+1)$
\end{itemize}
	 where  $w=\max\{w(x) \,|\, x\in V\}$ and $V=V_1\cup V_2$.
\end{Theorem}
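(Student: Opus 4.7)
The plan is to reduce the claim for symbolic powers to the ordinary-power statement already recorded in Theorem \ref{bipartitepower}. Because $D_1$ and $D_2$ consist only of isolated vertices, every edge of $D = D_1 * D_2$ is of the form $(x_{1i}, x_{2j})$ with $i \in [n_1]$, $j \in [n_2]$. Hence each $x_{1i}$ is a source of $D$, so $w(x_{1i}) = 1$ by the convention fixed in the introduction, and
\[
I(D) = (x_{1i} x_{2j}^{w_{2j}} : i \in [n_1], j \in [n_2]) = IJ,
\]
where $I := (x_{11}, \ldots, x_{1n_1})$ and $J := (x_{21}^{w_{21}}, \ldots, x_{2n_2}^{w_{2n_2}})$ lie in disjoint sets of variables.

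The key point will be to prove that $I(D)^{(t)} = I(D)^t$ for every $t \geq 1$. Applying Theorem \ref{rdecompose} with $I(D_1) = I(D_2) = (0)$ (or just observing directly that $I$ and $J$ are themselves irreducible monomial ideals on disjoint variable sets), one obtains the irredundant primary decomposition $I(D) = I \cap J$, so
\[
\Ass(S/I(D)) = \{\mathfrak{m}_1, \mathfrak{m}_2\}, \quad \mathfrak{m}_1 = (x_{1j} : j \in [n_1]), \quad \mathfrak{m}_2 = (x_{2j} : j \in [n_2]).
\]
Since $\sqrt{I} = \mathfrak{m}_1$ is contained only in $\mathfrak{m}_1$ and $\sqrt{J} = \mathfrak{m}_2$ is contained only in $\mathfrak{m}_2$, Lemma \ref{symbolic powers} yields $I(D)^{(t)} = I^t \cap J^t$. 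The disjointness of supports then forces $I^t \cap J^t = I^t J^t = (IJ)^t = I(D)^t$, since any monomial lying in both $I^t$ and $J^t$ factors as a monomial in $I^t$ (using only $V_1$-variables) times a monomial in $J^t$ (using only $V_2$-variables).

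With the identification $I(D)^{(t)} = I(D)^t$ established, both conclusions (1) and (2) follow directly from Theorem \ref{bipartitepower}. I do not anticipate a significant obstacle here; the only step requiring some care is the identification of the primary decomposition and the associated primes of $I(D)$, but once this is set up the coincidence of symbolic and ordinary powers is immediate from the disjoint-support structure of the two factors.
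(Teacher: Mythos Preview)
Your argument is correct, and for part~(2) it is actually a bit cleaner than the paper's. Both you and the paper identify $I(D)^{(t)}=I^t\cap J^t$ via Lemma~\ref{symbolic powers} (the paper writes this as $I_1^{(t)}\cap (I_2^{w_2})^{(t)}$). At that point you observe that disjoint supports give $I^t\cap J^t=I^tJ^t=(IJ)^t=I(D)^t$, so the whole statement reduces immediately to Theorem~\ref{bipartitepower}; the paper in fact invokes exactly this identification for part~(1), but for part~(2) it instead runs the short exact sequence
\[
0\longrightarrow S/I(D)^{(t)}\longrightarrow S/I_1^{(t)}\oplus S/(I_2^{w_2})^{(t)}\longrightarrow S/(I_1^{(t)}+(I_2^{w_2})^{(t)})\longrightarrow 0
\]
and computes the three regularities separately using Lemmas~\ref{regular} and~\ref{colon}. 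Your shortcut avoids that extra computation; the paper's exact-sequence argument, on the other hand, is the template that is genuinely needed in the $r\ge 3$ case (Theorem~\ref{symbolic isolate2}), where symbolic and ordinary powers no longer coincide.
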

\begin{proof} (1) follows from Theorem \ref{bipartitepower}, since 	$\depth\,(S/I(D)^{(t)})=\depth\,(S/I(D)^{t})$ in this case.
	
	(2) It is clear for the  $t=1$  by Theorem \ref{bipartitepower} (2).
	Now, assume that  $t\geq 2$.  Using   notations of Lemma \ref{decompose}, we have $I(D)^{(t)}=I_1^{(t)}\cap (I_2^{w_2})^{(t)}$ by Lemma \ref{symbolic powers}. Thus, by Lemmas \ref{regular} and \ref{colon}, one has
	\begin{align*}
		\reg\,(S/I_1^{(t)})&=\reg\,(S/I_1^{t})=t-1,\\
		\reg\,(S/(I_2^{w_2})^{(t)})&=\reg\,(S/(I_2^{w_2})^{t})=\sum\limits_{x\in V}{w(x)}-|V|+(t-1)w,\\
		\reg\,(S/(I_1^{(t)}+(I_2^{w_2})^{(t)}))&=\reg\,(S/(I_1^{t}+(I_2^{w_2})^{t}))\\
		&=\sum\limits_{x\in V}{w(x)}-|V|+(t-1)w+t-1\\
		&=\sum\limits_{x\in V}{w(x)}-|V|+(t-1)(w+1).
	\end{align*}
	Hence, the desired results hold by using Lemma \ref{exact} (1) and (4) to the following exact sequence
	\[
	0\longrightarrow S/I(D)^{(t)}\longrightarrow S/I_1^{(t)}\oplus S/(I_2^{w_2})^{(t)}\longrightarrow S/(I_1^{(t)}+(I_2^{w_2})^{(t)})\longrightarrow 0.\qedhere
	\]
\end{proof}

Now we present our main theorem of this section.
\begin{Theorem}\label{symbolic isolate2}
	Let $r\geq2$ be an integer and let   $D:=D_1*\cdots*D_r$ be the join  of   weighted oriented graphs  $D_1,\ldots,D_r$, where $D_i$ consists of isolated vertices with  $V_i=\{x_{ij}\,|\,  j\in [n_i]\}$ for any  $i\in [r]$.
	Then, for any $t\ge 1$, we have
	\begin{itemize}
		\item[(1)] 		$\depth\,(S/I(D)^{(t)})=1$;
		\item[(2)]  $\reg\,(S/I(D)^{(t)})\leq \sum\limits_{x\in V}{w(x)}-|V|+1+(t-1)(w+1)$.
	\end{itemize}
	And these	equalities hold  if $w=\max\{w(x) \,|\, x\in V_2\}$, where  $w=\max\{w(x) \,|\, x\in V\}$ and $V=V_1\sqcup\cdots\sqcup V_r$.
\end{Theorem}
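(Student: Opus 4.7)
The plan is to proceed by induction on $r$, with the base case $r=2$ handled directly by Theorem \ref{symbolic isolate1}. For $r\ge 3$, set $D'' := D_2*\cdots*D_r$, $M := \widehat I_1 = \sum_{j=2}^{r} I_j^{w_j}$ (which lies in $S_2 := \KK[V_2\sqcup\cdots\sqcup V_r]$), and $L := \bigcap_{i=2}^{r}\widehat I_i$. By Lemma \ref{decompose}(2), $L = I_1 + I(D'')$, and the all-isolated case of Theorem \ref{rdecompose} gives $I(D) = M \cap L$. A direct analysis via Lemma \ref{symbolic powers}---using that $\sqrt{\widehat I_j}\subseteq\sqrt{\widehat I_i}$ forces $j=i$, and that $M$ is itself an irreducible (hence primary) monomial ideal---then yields
\[
I(D)^{(t)} = M^t \cap L^{(t)}, \qquad L^{(t)} = \bigcap_{i=2}^{r}\widehat I_i^{t}.
\]

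The engine of the argument is the Mayer--Vietoris sequence
\[
0 \longrightarrow S/I(D)^{(t)} \longrightarrow S/M^t \oplus S/L^{(t)} \longrightarrow S/(M^t + L^{(t)}) \longrightarrow 0
\]
combined with Lemma \ref{exact}. Writing $B := \sum_{x\in V}w(x) - |V| + 1 + (t-1)(w+1)$, I plan to bound the three summands as follows. First, since the generators of $M$ form a regular sequence of pure powers, an extension of Lemma \ref{colon} (to varying exponents) yields $\reg(S/M^t) = \sum w(x) - |V| + (t-1)w = B - t$. Second, because $L = I_1 + I(D'')$ has disjoint variable supports, Lemma \ref{sum2}(1) reduces $\reg(S/L^{(t)})$ to $\max_{j\in[t]}\{(t-j)+\reg(S_2/I(D'')^{(j)})\}$, and the inductive hypothesis on $D''$ together with maximization at $j=t$ gives $\reg(S/L^{(t)})\le B$. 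Third, since $M^t + L^{(t)} \supseteq M^t + I_1^t$ is $\mm$-primary, $S/(M^t+L^{(t)})$ is Artinian and its regularity equals its top nonzero degree; a degree-counting argument in the style of Lemma \ref{symbolic sum} places every monomial $f = uv$ (with $u\in S_1$, $v\in S_2$) of degree $\ge B$ inside $M^t + I_1^t$: if $\deg u \ge t$ then $f\in I_1^t$, otherwise $\deg v \ge B-\deg u > B-t$ exceeds the socle degree of $S_2/M^t$, forcing $v\in M^t$. Hence $\reg(S/(M^t+L^{(t)})) \le B-1$, and Lemma \ref{exact}(2) yields $\reg(S/I(D)^{(t)}) \le B$.

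The depth assertion in (1) follows by combining $\depth(S/M^t) = n_1 \ge 1$ (Lemma \ref{sum1}(4)), $\depth(S/L^{(t)}) = 1$ (Lemma \ref{sum2}(2) together with induction), and $\depth(S/(M^t+L^{(t)})) = 0$ with Lemma \ref{exact}(4); the equality clause activates (since $1\ne 0$) and forces $\depth(S/I(D)^{(t)})=1$. For the equality in (2) under the hypothesis $w = \max\{w(x): x\in V_2\}$, the plan is to exhibit a monomial witness of degree $B-1$ outside $M^t + L^{(t)}$, forcing $\reg(S/(M^t+L^{(t)})) = B-1$, and then invoking the equality clause of Lemma \ref{exact}(2). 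Explicitly, take $u = x_{1\,1}^{t-1}$ and let $v$ be the extremal socle monomial of $S_2/M^t$ concentrated on the $V_2$-vertex $y_\ast$ of weight $w$, namely $v = y_\ast^{tw-1}\prod_{y\in V(D'')\setminus\{y_\ast\}} y^{w(y)-1}$. Because $y_\ast\in V_2$ is a source in $D''$, it never appears as a weighted target $y_j^{w_j}$ in any generator of $I(D'')$, and no other variable attains an exponent $\ge$ its own weight in $v$; a check against the primary decomposition $I(D'')^{(s)} = \bigcap_{i\ge 2}(\widehat I''_i)^s$ shows $v\notin I(D'')^{(s)}$ for any $s\ge 1$. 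Thus $uv\notin L^{(t)}$ and $uv\notin M^t$, so $uv\notin M^t + L^{(t)}$ has degree $B-1$.

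The two main technical obstacles are (a) the Artinian degree-count underlying $\reg(S/(M^t+L^{(t)}))\le B-1$, which reduces to the numerical inequality $B-\deg u > B-t$ for $\deg u<t$ (simplifying to $t>\deg u$), and (b) verifying that the socle witness $uv$ escapes $L^{(t)}$ in the equality case---a step that crucially depends on the hypothesis placing the maximum weight inside $V_2$, so that $y_\ast$ is a source of $D''$ and cannot appear as a weighted target in any generator of $I(D'')$; once these two points are handled, the rest is bookkeeping through the Mayer--Vietoris sequence and the inductive hypothesis.
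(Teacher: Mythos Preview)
Your overall strategy is sound but takes a genuinely different route from the paper. You peel off the \emph{first} factor $D_1$ and induct on $D''=D_2*\cdots*D_r$, writing $I(D)^{(t)}=M^t\cap L^{(t)}$ with $M=\widehat I_1$ and $L=I_1+I(D'')$. The paper instead peels off the \emph{last} factor $D_r$, writing $I(D)^{(t)}=(J+I_r^{w_r})^{(t)}\cap \widehat I_r^{\,(t)}$ with $J=I(D_1*\cdots*D_{r-1})$, and bounds the Artinian sum via Lemma~\ref{symbolic sum}. The payoff of the paper's choice is that the hypothesis ``$w$ is realised in $V_2$'' survives unchanged in $D_1*\cdots*D_{r-1}$, so the inductive \emph{equality} gives $\reg(S/(J+I_r^{w_r})^{(t)})=B$ directly, and Lemma~\ref{exact}(1) then forces $\reg(S/I(D)^{(t)})\ge B$. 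In your decomposition the $V_2$-vertices become sources of $D''$ and lose their weights, so the inductive equality is unavailable; you compensate by building an explicit socle witness $uv\notin M^t+L^{(t)}$, which is a nice and correct construction.

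There is, however, a real gap in the way you finish the equality case. You appeal to the equality clause of Lemma~\ref{exact}(2), which requires $\reg\bigl(S/M^t\oplus S/L^{(t)}\bigr)\ne\reg\bigl(S/(M^t+L^{(t)})\bigr)=B-1$. This hypothesis can fail: with $r=3$, $|V_i|=1$, and weights $1,2,2$ one has $B=3t$, $\reg(S/M^t)=2t$, and $\reg(S/L^{(t)})=3t-1=B-1$, so the clause does not apply. The repair is short and uses information you already have: since $\depth(S/M^t)=n_1\ge 1$ and $\depth(S/L^{(t)})=1$, the long exact sequence in local cohomology gives an injection
\[
S/(M^t+L^{(t)})\;=\;H^0_{\mm}\bigl(S/(M^t+L^{(t)})\bigr)\;\hookrightarrow\;H^1_{\mm}\bigl(S/I(D)^{(t)}\bigr),
\]
and your witness $uv$ of degree $B-1$ shows the left side is nonzero in that degree. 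Hence $a_1(S/I(D)^{(t)})\ge B-1$, so $\reg(S/I(D)^{(t)})\ge B$, completing the equality without invoking the clause.
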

\begin{proof}
	We prove the assertion by induction on $r$ and $t$. The cases $t=1$, or $t\geq 2$ and $r=2$ follows from Theorem \ref{regularity}, or Theorem \ref{symbolic isolate1}, respectively.
	
	Now, assume that  $t\geq 2$ and $r\ge 3$. Using  notations of Lemma \ref{decompose},  we have  $I(D)=\bigcap\limits_{i=1}^r\widehat{I}_i$.
	By similar arguments as  the case $r=2$ in Theorem \ref{symbolic isolate1}, we obtain
	\[
	I(D)^{(t)}=(\bigcap\limits_{i=1}^r\widehat{I}_i)^{(t)}=(\bigcap\limits_{i=1}^{r-1}\widehat{I}_i)^{(t)}\cap(\widehat{I}_r)^{(t)}=(J+I_r^{w_r})^{(t)}\cap (\widehat I_r)^{(t)},
	\]
where $J=I(D_1*\cdots*D_{r-1})$.

Note that $\reg\,(S/\widehat I_r^{\,(t)})=t-1$ and $\reg\,(S/((J+I_r^{w_r})^{(t)}+\widehat I_r^{\,(t)}))\leq \sum\limits_{x\in V_r}{w(x)}-|V_r|+1+(t-1)(w'+1)$  by Lemma \ref{symbolic sum}, where $w'=\max\{w(x) \,|\, x\in V_r\}$.  Meanwhile, we also have $\depth\,(S/\widehat I_r^{\,(t)})=n_r$ and $\depth\,(S/((J+I_r^{w_r})^{(t)}+\widehat I_r^{\,(t)}))=0$, since $(J+I_r^{w_r})^{(t)}+\widehat I_r^{\,(t)}$ is $\mathfrak{m}$-primary  in $S$, which is shown in Lemma \ref{symbolic sum}.
Therefore, 	by Lemmas \ref{sum2}, \ref{colon} and  the induction, we have
\begin{align*}
& \reg\left(\frac{S}{(J+I_r^{w_r})^{(t)}}\right)=\max\limits_{\begin{subarray}{c}
 i\in [t-1]\\
 j\in [t]\\
 \end{subarray}} \left\{{\reg\left(\frac{S}{J^{(t-i)}}\right)+\reg\left(\frac{S}{(I_r^{w_r})^{(i)}}\right)+1,\reg\left(\frac{S}{J^{(t-j+1)}}\right)}\right.\\
&\left.{+\reg\left(\frac{S}{(I_r^{w_r})^{(j)}}\right)}\right\}\\
&=\max\limits_{\begin{subarray}{c}
	j\in [t]\\
	\end{subarray}} \left\{\reg\left(\frac{S}{J^{(t-j+1)}}\right)+\reg\left(\frac{S}{(I_r^{w_r})^{(j)}}\right)\right\}\\
&\leq\max\limits_{\begin{subarray}{c}
	j\in [t]\\
	\end{subarray}} \{\sum\limits_{x\in V\setminus V_r}{w(x)}-|V\setminus V_r|+1+(t-j)(w''+1)+\sum\limits_{x\in V_r}{w(x)}-|V_r|+(j-1)w'\}\\
&=\max\limits_{\begin{subarray}{c}
	j\in [t]\\
	\end{subarray}} \{\sum\limits_{x\in V}{w(x)}-|V|+1+(t-j)w''+(j-1)w'+t-j\}\\
&\leq\sum\limits_{x\in V}{w(x)}-|V|+1+(t-1)(w+1),
\end{align*}
where $w''=\max\{w(x) \,|\, x\in V\setminus V_r\}$ and $V=V_1\sqcup\cdots\sqcup V_r$. And the second equality holds because of $\reg\,(S/(I_r^{w_r})^{(i+1)})\geq\reg\,(S/(I_r^{w_r})^{(i)})+1$ for any $i\in[t-1]$ by Lemma \ref{colon}.

If $w=\max\{w(x) \,|\, x\in V_2\}$, then the first inequality in the expression of $\reg\left(\frac{S}{(J+I_r^{w_r})^{(t)}}\right)$ becomes equality by the induction, and the last inequality becomes equality because the
function $g(j):=\sum\limits_{x\in V}{w(x)}-|V|+1+(t-j)w+(j-1)w'+(t-j)$ is strictly monotonic decreasing with respect to $j$.

Hence, the desired results hold by using Lemma \ref{exact} (1) to the following exact sequence
\[
0\longrightarrow S/I(D)^{(t)}\longrightarrow S/(J+I_r^{w_r})^{(t)}\oplus S/\widehat I_r^{\,(t)}\longrightarrow S/((J+I_r^{w_r})^{(t)}+\widehat I_r^{\,(t)})\longrightarrow 0.\qedhere
\]
\end{proof}

An immediate consequence of the above theorem and  Theorem \ref{regularity} is the following corollary.

\begin{Corollary}
	\label{cor1}
	Let  $D:=D_1*\cdots*D_r$ be the join  of   weighted oriented graphs  $D_1,\ldots,D_r$ as in  Theorem \ref{symbolic isolate2}, then, for any $t\ge 1$, we have
	\[
	\reg\,(S/I(D)^{(t)})\leq \reg\,(S/I(D))+(t-1)(w+1).
	\]
	And these	equalities hold  if $w=\max\{w(x) \,|\, x\in V_2\}$, where  $w=\max\{w(x) \,|\, x\in V(D)\}$.
\end{Corollary}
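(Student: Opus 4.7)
The plan is to assemble this corollary directly from the two theorems immediately preceding it, since both pieces of hard work have already been carried out. First, I would invoke Theorem \ref{regularity} to rewrite the right-hand side of the desired inequality as
\[
\reg(S/I(D)) + (t-1)(w+1) = \sum_{x \in V}{w(x)} - |V| + 1 + (t-1)(w+1).
\]
This reduces the claim to
\[
\reg(S/I(D)^{(t)}) \leq \sum_{x \in V}{w(x)} - |V| + 1 + (t-1)(w+1),
\]
which is precisely the content of Theorem \ref{symbolic isolate2}(2). The case $t=1$ is trivial since $I(D)^{(1)} = I(D)$.

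For the equality assertion, I would observe that the hypothesis $w = \max\{w(x) \mid x \in V_2\}$ is exactly the condition under which Theorem \ref{symbolic isolate2}(2) produces equality in its upper bound. Combining that equality with the exact formula $\reg(S/I(D)) = \sum_{x \in V} w(x) - |V| + 1$ from Theorem \ref{regularity} yields equality in the corollary.

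Since everything reduces to a straightforward substitution, there is no genuine obstacle at this stage; the entire technical difficulty has already been absorbed into the proofs of Theorems \ref{regularity} and \ref{symbolic isolate2}, especially the induction based on the Mayer--Vietoris-type short exact sequence and the key Lemma \ref{symbolic sum} bounding $\reg$ of the sum of symbolic powers. The only point that needs a moment's care is to confirm that the exact value computed in Theorem \ref{regularity} and the bound of Theorem \ref{symbolic isolate2} use the same convention for $V$ and for the weight function (both range over $V = V_1 \sqcup \cdots \sqcup V_r$ with source vertices having weight $1$), which is the case here.
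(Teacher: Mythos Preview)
Your proposal is correct and matches the paper's approach exactly: the paper simply states that the corollary is ``an immediate consequence of the above theorem and Theorem \ref{regularity},'' which is precisely the substitution you describe. The only minor wrinkle is that Theorem \ref{regularity} is stated for $r\ge 3$, so for $r=2$ one should cite Theorem \ref{bipartitepower} instead (which gives the same formula), but this is a cosmetic point.
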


\medskip
The following examples show that the upper bounds in  Theorems 	\ref{symbolic isolate2} can be obtained, but may be strict.
\begin{Example}
	\label{exam2}
	Let $I(D)=(x_1x_3^2,x_1x_4^2,x_1x_5^3,x_2x_3^2,x_2x_4^2,x_2x_5^3,x_3x_4^2,x_3x_5^3)$ be the edge ideal of the join of  weighted oriented  graphs with the partition $V_1=\{x_1,x_2\}, V_2=\{x_3\}, V_3=\{x_4,x_5\}$. The  weight function is $w_1=w_2=1, w_3=w_4=2$ and $w_5=3$. By using CoCoA, we obtain $\reg\,(S/I(D)^{(2)})=8$. But we have $\sum\limits_{i=1}^5{w_i}-5+1+(3+1)=9$.
\end{Example}

\begin{Example}
	\label{exam3}
	Let $I(D)=(x_1x_3^5,x_1x_4^2,x_1x_5^3,x_2x_3^5,x_2x_4^2,x_2x_5^3,x_3x_4^2,x_3x_5^3)$ be the edge ideal of the join of  weighted oriented  graphs with the partition $V_1=\{x_1,x_2\}, V_2=\{x_3\}, V_3=\{x_4,x_5\}$.  The weight function is $w_1=w_2=1, w_3=5, w_4=2$ and $w_5=3$. Thus  $w=\max\{w_i \,|\, i\in [5]\}=5$. By using CoCoA, we  have $\reg\,(S/I(D)^{(2)})=\sum\limits_{i=1}^5{w_i}-5+1+(5+1)=14$.
\end{Example}

\medskip
\section{Ordinary powers of the edge ideal of the join of graphs with at least an  oriented edge}
In this section, we study  depth and regularity of ordinary powers of the edge ideal of the join of weighted oriented graphs with at least an  oriented edge  for per graph.
We give the exact formulas for depth and regularity of the edge ideal of the join of two weighted oriented graphs, and  also provide the upper  bounds of  regularity of  ordinary powers when the second  graph is a
weighted oriented complete graph.

\begin{Lemma}\label{bipartitepower2}
	Let $D:=D_1*D_2$ be the join of two weighted oriented graphs  $D_1$ and $D_2$, where the vertex set of $D_i$ is  $V_i=\{x_{ij}\,|\, j\in [n_i]\}$ and $n_i=|V_i|$ for  $i\in [2]$.
If each $D_i$ contains at least an  oriented edge.
 Then
	\begin{itemize}
		\item[(1)] $\depth\,(S/I(D))=1$;
		\item[(2)]  $\reg\,(S/I(D))=\max\{\sum\limits_{x\in V_2}{w(x)}-|V_2|+\reg\,(S_1/I(D_1)),\reg\,(S_2/I(D_2))\}$,
	 where $S_1=k[x_{11}, \ldots,x_{1n_1}]$ and $S_2=k[x_{21},\ldots,x_{2n_2}]$.
\end{itemize}	
	\end{Lemma}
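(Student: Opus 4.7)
The plan is to apply Theorem~\ref{rdecompose} in the case $r=2$ to decompose $I(D)$ as $K\cap L$ with $K$ and $L$ built from complete intersections disjoint from $I(D_1)$ and $I(D_2)$, respectively, and then read off both invariants from the Mayer--Vietoris short exact sequence via Lemmas~\ref{sum1} and~\ref{exact}.

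Concretely, set $K=I_2^{w_2}+I(D_1)$ and $L=I_1+I(D_2)$, where $I_1=(x_{1j}:j\in[n_1])$ and $I_2^{w_2}=(x_{2j}^{w(x_{2j})}:j\in[n_2])$. By Theorem~\ref{rdecompose}, $I(D)=K\cap L$. Moreover, every generator of $I(D_2)$ is divisible by some $x_{2j}^{w(x_{2j})}$ and every generator of $I(D_1)$ lies in $I_1$, so $K+L=I_1+I_2^{w_2}$. Since $I_1$ and $I_2^{w_2}$ are complete intersections in $S_1=\KK[V_1]$ and $S_2=\KK[V_2]$ with $\reg(S_1/I_1)=\depth(S_1/I_1)=\depth(S_2/I_2^{w_2})=0$ and $\reg(S_2/I_2^{w_2})=\sum_{x\in V_2}(w(x)-1)$, and since the two summands in each of $K$, $L$, $K+L$ live in the disjoint variable sets $V_1$ and $V_2$, Lemma~\ref{sum1} yields
\[
\reg(S/K)=\reg(S_1/I(D_1))+\textstyle\sum_{x\in V_2}(w(x)-1),\qquad \depth(S/K)=\depth(S_1/I(D_1)),
\]
\[
\reg(S/L)=\reg(S_2/I(D_2)),\qquad \depth(S/L)=\depth(S_2/I(D_2)),
\]
together with $\reg(S/(K+L))=\sum_{x\in V_2}(w(x)-1)$ and $\depth(S/(K+L))=0$.

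Now feed these into the Mayer--Vietoris sequence
\[0\longrightarrow S/I(D)\longrightarrow S/K\oplus S/L\longrightarrow S/(K+L)\longrightarrow 0\]
and apply Lemma~\ref{exact}. Because $D_1$ contains at least one oriented edge, $I(D_1)$ is nonzero with generators of degree $\ge 2$, giving $\reg(S_1/I(D_1))\ge 1$. Hence $\reg(S/K)\ge \reg(S/(K+L))+1$, so $\reg(S/K\oplus S/L)>\reg(S/(K+L))$, and the equality clause in Lemma~\ref{exact}(2) gives
\[\reg(S/I(D))=\max\{\reg(S/K),\reg(S/L)\}=\max\{\reg(S_1/I(D_1))+\textstyle\sum_{x\in V_2}(w(x)-1),\ \reg(S_2/I(D_2))\},\]
which is part~(2). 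For part~(1), once one knows $\depth(S_i/I(D_i))\ge 1$ for $i=1,2$, the direct sum satisfies $\depth(S/K\oplus S/L)\ge 1\ne 0=\depth(S/(K+L))$, and the equality clause in Lemma~\ref{exact}(4) yields $\depth(S/I(D))=\min\{\depth(S/K\oplus S/L),\,1\}=1$.

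\emph{Main obstacle.} The nontrivial input above is the lower bound $\depth(S_i/I(D_i))\ge 1$ whenever $D_i$ has an oriented edge, i.e., that $\mathfrak m_i\notin\Ass(S_i/I(D_i))$. For unweighted simple graphs this is elementary (the full vertex set is never a minimal vertex cover once there is an edge and $|V_i|\ge 2$), but for weighted oriented graphs one must analyse the colon ideals $(I(D_i):u)$ directly, using the combinatorial description of the associated primes of edge ideals of weighted oriented graphs, to rule out any monomial $u$ witnessing $(I(D_i):u)=\mathfrak m_i$. Once this technical step is handled, the Mayer--Vietoris computation above delivers both~(1) and~(2) at once.
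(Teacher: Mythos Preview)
Your approach is exactly the paper's: decompose $I(D)=K\cap L$ via Theorem~\ref{rdecompose} with $K=I_2^{w_2}+I(D_1)$ and $L=I_1+I(D_2)$, compute the invariants of $S/K$, $S/L$, $S/(K+L)$ through Lemma~\ref{sum1}, and feed them into the Mayer--Vietoris sequence using Lemma~\ref{exact}. The paper cites parts~(1) and~(4) of Lemma~\ref{exact} where you cite~(2) and~(4), but the substance is identical, and your regularity argument for part~(2) is complete and correct.

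The ``main obstacle'' you isolate for part~(1) --- the inequality $\depth(S_i/I(D_i))\ge 1$ --- is not verified in the paper's proof either; after invoking Lemma~\ref{exact}(4) the paper simply writes ``we obtain the desired results,'' leaving this hypothesis tacit. Your caution is in fact warranted, because the hypothesis can fail. Let $D_1$ be the weighted oriented $3$-cycle with edges $(x_1,x_2),(x_2,x_3),(x_3,x_1)$ and $w(x_1)=w(x_2)=w(x_3)=2$, so that $I(D_1)=(x_1x_2^2,\,x_2x_3^2,\,x_3x_1^2)$; then $(I(D_1):x_1x_2x_3)=(x_1,x_2,x_3)=\mm_1$, hence $\depth(S_1/I(D_1))=0$. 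For any $D_2$ with at least one edge, the monomial $u=x_1x_2x_3$ lies in $L=I_1+I(D_2)$ but not in $K=I_2^{w_2}+I(D_1)$, so $(I(D):u)=(K:u)\cap(L:u)=\mm\cap S=\mm$ and therefore $\depth(S/I(D))=0$. Thus part~(1) as stated actually fails in this example, and the gap you flag cannot be closed without an additional hypothesis (for instance $\mm_i\notin\Ass(I(D_i))$ for each $i$) that neither your outline nor the paper's argument supplies.
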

\begin{proof} We use  notations of Lemma \ref{decompose}. Obviously, we have  $I(D)=(\widehat I_1+I(D_1))\cap (\widehat I_2+I(D_2))$
	by Theorem \ref{rdecompose}.
Set $J_i=\widehat I_i+I(D_i)$ for $i\in [2]$, then  $I(D)=J_1\cap J_2$ and   $\supp\,(\widehat{I}_i)\cap \supp\,(I(D_i))=\emptyset$. Thus,
by Lemma \ref{sum1} (2), one has
\begin{eqnarray*}
	\depth\,(S/J_1)&=&\depth\,(S_1/I(D_1))+\depth\,(S_2/\widehat I_1)=\depth\,(S_1/I(D_1)),\\
\depth\,(S/J_2)&=&\depth\,(S_2/I(D_2))+\depth\,(S_1/\widehat I_2)=\depth\,(S_2/I(D_2)).
\end{eqnarray*}
By  Lemmas \ref{quotient} (1) and \ref{sum1} (1), we have
			\begin{eqnarray*}
				\reg\,(S/J_1)&=&\reg\,(S_2/\widehat I_1)+\reg\,(S_1/I(D_1))=\sum\limits_{x\in V_2}{w(x)}-| V_2|+\reg\,(S_1/I(D_1)),\\
		\reg\,(S/J_2)&=&\reg\,(S_1/\widehat I_2)+\reg\,(S_2/I(D_2))=\reg\,(S_2/I(D_2)).
	\end{eqnarray*}
Note that $(J_1,J_2)=\widehat I_1+\widehat I_2=(x_{11}, \ldots,x_{1n_1},x_{21}^{w_{21}},\ldots,x_{2n_2}^{w_{2n_2}})$. Hence one has
\[
\depth\,(S/(J_1,J_2))=0\text{ and\ } \reg\,(S/(J_1,J_2))=\sum\limits_{x\in V_2}{w(x)}-|V_2|.
\]
Applying lemma \ref{exact} (1), (4) to the following exact sequence
\[
0\longrightarrow S/J_1\cap J_2\longrightarrow S/J_1\oplus S/J_2\longrightarrow S/(J_1,J_2)\longrightarrow 0,
\]
we obtain the desired results.
\end{proof}

\begin{Theorem}\label{r-partitedirected}
	Let $r\geq3$ be an integer, and let $D:=D_1*\cdots*D_r$ be the join of  weighted oriented graphs  $D_1,\ldots,D_r$,  where the vertex set of $D_i$ is  $V_i=\{x_{ij}\,|\, j\in [n_i]\}$ and $n_i=|V_i|$ for  $i\in [r]$. If each $D_i$ contains at least an  oriented edge. Then
	\begin{itemize}
		\item[(1)] $\depth\,(S/I(D))=1$;
		\item[(2)]  $\reg\,(S/I(D))\!=\!\max\{\reg\,(S_i/I(D_i))+\sum\limits_{x\in T_i}w(x)-|T_i|:i\in [r]\}$, where   $T_i=V(D)\setminus \bigcup\limits_{j=1}^i V_j$ for any  $i\in [r]$.
\end{itemize}	
\end{Theorem}	
\begin{proof} We apply induction on  $r$. The case $r=2$ follows from Lemma \ref{bipartitepower2}.
Now, assume that $r\geq  3$. In this case, we  have $D=(D_1*\cdots*D_{r-1})*D_r$ and $I(D)=I((D_1*\cdots*D_{r-1})*D_r)$.
Thus, by Lemma \ref{bipartitepower2} and the inductive hypothesis, we obtain that
	$\depth\,(S/I(D))=1$ and
\begin{align*}
& \reg\left(\frac{S}{I(D)}\right)=\reg\left(\frac{S}{I((D_1*\cdots*D_{r-1})*D_r)}\right)\\
&=\max\left\{\sum\limits_{x\in V_r}{w(x)}-|V_r|+\reg\left(\frac{S'}{I(D_1*D_2*\cdots*D_{r-1})}\right),\reg\left(\frac{S_r}{I(D_r)}\right)\right\}\\
&=\max\left\{{\sum\limits_{x\in V_r}{w(x)}-|V_r|+\max\left\{\sum\limits_{x\in T'_i}{w(x)}-|T'_i|+\reg\left(\frac{S_i}{I(D_i)}\right):i\in [r-1]\right\},}\right.\\
& \left.{\reg\left(\frac{S_r}{I(D_r)}\right)}\right\}\\
&=\max\left\{\reg\left(\frac{S_i}{I(D_i)}\right)+\sum\limits_{x\in T_i}w(x)-|T_i|:i\in [r]\right\}	
\end{align*}
 where $S'=S_1\otimes_\KK S_2\otimes_\KK\cdots \otimes_\KK S_{r-1}$ and  $T'_i=V(D)\setminus (V_r\cup(\bigcup\limits_{j=1}^i V_j))$.
	\end{proof}		

\begin{Remark}\label{general}
	Let $r\geq 2$ be an integer, and let $D:=D_1*\cdots*D_r$ be the join of  weighted oriented graphs  $D_1,\ldots,D_r$ with  $V_i=\{x_{ij}\,|\, j\in [n_i]\}$ and $n_i=|V_i|$ for  $i\in [r]$.  If there exist some  $D_i$ consisting of  isolated vertices and some $D_j$  containing at least an  oriented edge. From the proof of Lemma \ref{bipartitepower2} and Theorem \ref{r-partitedirected}, we  still have $\depth\,(S/I(D))=1$. But we can't guarantee  the regularity of $S/I(D)$ to obtain the  equality, that is, we have  $\reg\,(S/I(D))\le\max\{\sum\limits_{x\in T_1}w(x)-|T_1|+1,\reg\,(S_i/I(D_i))+ \sum\limits_{x\in T_i}w(x)-|T_i|: i\in [r] \}$.
\end{Remark}

An immediate consequence of Theorem \ref{r-partitedirected} is  the generalizations of \cite[Corollary 3.10, Proposition 3.12]{M}.
\begin{Corollary}\label{rpartitepower2}
Let  $G:=G_1*\cdots*G_r$ be the join  of    graphs  $G_1,\ldots,G_r$. If each $G_i$  contains at least an  edge. Then
\begin{itemize}
		\item[(1)] $\depth\,(S/I(G))=1$;
		\item[(2)]  $\reg\,(S/I(G))=\max\{\reg\,(S_i/I(G_i))\mid i\in [r]\}$.
\end{itemize}	
\end{Corollary}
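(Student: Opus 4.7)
The plan is to observe that this corollary is a direct specialization of Theorem \ref{r-partitedirected} (and, for $r=2$, of Lemma \ref{bipartitepower2}) to the case in which every vertex carries the trivial weight $w(x)=1$. The only thing to check is that passing from an unoriented simple graph $G_i$ (with at least one edge) to a weighted oriented graph $D_i$ with $w\equiv 1$ and an arbitrary orientation of the edges preserves the edge ideal, so that all results about $I(D)$ transfer verbatim to $I(G)$.

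First, for each $i\in [r]$ I orient the edges of $G_i$ arbitrarily and assign weight $1$ to every vertex, producing a weighted oriented graph $D_i$. Since $w(x_j)=1$, each generator of $I(D_i)$ is of the form $x_ix_j^{1}=x_ix_j$, independently of the chosen orientation; hence $I(D_i)=I(G_i)$. The hypothesis that $G_i$ has at least one edge becomes exactly the hypothesis of Theorem \ref{r-partitedirected} that $D_i$ contains at least one oriented edge. Moreover the join construction is compatible: $D_1\ast\cdots\ast D_r$ (with weights identically $1$) has edge ideal equal to $I(G_1\ast\cdots\ast G_r)$, so $I(D)=I(G)$ and the polynomial ring $S$ is unchanged.

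Now I apply Theorem \ref{r-partitedirected} (for $r\geq 3$). Part (1) immediately yields $\depth\,(S/I(G))=\depth\,(S/I(D))=1$. For part (2), observe that the correction term in the theorem reads
\[
\sum_{x\in T_i} w(x)-|T_i|=|T_i|-|T_i|=0
\]
for every $i\in [r]$, because all vertex weights equal $1$. Consequently
\[
\reg\,(S/I(G))=\max\{\reg\,(S_i/I(D_i))+0: i\in [r]\}=\max\{\reg\,(S_i/I(G_i)): i\in [r]\},
\]
which is the desired formula. For the case $r=2$ I invoke Lemma \ref{bipartitepower2} in exactly the same way: with $w\equiv 1$, the quantity $\sum_{x\in V_2}w(x)-|V_2|$ vanishes, and the lemma collapses to $\reg\,(S/I(G))=\max\{\reg\,(S_1/I(G_1)),\reg\,(S_2/I(G_2))\}$, while the depth statement is unchanged.

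There is essentially no obstacle here; the only thing worth double-checking is the compatibility between the unoriented edge ideal of a simple graph and the edge ideal of the associated weighted oriented graph with trivial weights, and this is immediate from the defining formula for $I(D)$ together with the convention that source vertices have weight one. Thus the corollary falls out at once from the weighted oriented results already proved in this section.
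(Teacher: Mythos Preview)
Your proof is correct and follows exactly the paper's approach: the paper's own proof reads in its entirety ``This is a direct consequence of the above theorem in the case of all vertices of $D_i$ having trivial weights.'' Your version is simply a more explicit unpacking of this, and your separate treatment of the case $r=2$ via Lemma~\ref{bipartitepower2} is a careful touch, since Theorem~\ref{r-partitedirected} is stated only for $r\ge 3$.
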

\begin{proof}
This is a direct consequence of the above theorem in the case of  all vertices of $D_i$ having trivial weights.
	\end{proof}		

\begin{Theorem}
	\label{complete graphs}
	Let  $K_n$ be a weighted oriented complete graph with edge ideal $I(K_n)=(x_ix_j^{w_j}: i,j\in [n] \text{\ and\ } i<j)$. Then, for any $t\ge 1$, we have
	\[
	\reg\,(S/I(K_n)^t)\leq \sum\limits_{i=1}^n{w_i}-n+1+(t-1)(w+1)
	\] where  $w=\max\{w_i \,|\, i\in [n]\}$. These	equalities hold here when $w=w_2$.
\end{Theorem}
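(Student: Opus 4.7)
The plan is to prove the upper bound by induction on $n$ and $t$, splitting off the sink variable $x_n$ via a short exact sequence. The base case $n=2$ is immediate: $I(K_2)=(x_1 x_2^{w_2})$ is principal, so $I(K_2)^t=(x_1^t x_2^{tw_2})$, and Lemma~\ref{quotient}(2) gives $\reg(S/I(K_2)^t)=t(1+w_2)-1$, which coincides with the claimed bound (with $w=w_2$), establishing both the bound and the equality clause in the base case.

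For the inductive step, set $C:=(x_1,\dots,x_{n-1})$ and start from $I(K_n)=C\cdot x_n^{w_n}+I(K_{n-1})$. I will first establish the two identities
\[
(\mathrm{i})\ I(K_n)^t : x_n^{w_n}=C\cdot I(K_n)^{t-1},\qquad (\mathrm{ii})\ \bigl(I(K_n)^t,\,x_n^{w_n}\bigr)=\bigl(I(K_{n-1})^t,\,x_n^{w_n}\bigr).
\]
Identity (ii) follows because any generator of $I(K_n)^t$ mentioning the letter $x_n$ already contains $x_n^{w_n}$. For (i), I would expand $I(K_n)^t=\sum_{k=0}^{t}x_n^{kw_n}\,C^k\,I(K_{n-1})^{t-k}$, compute the colon by $x_n^{w_n}$ termwise, and use the containment $I(K_{n-1})\subseteq C$ to absorb the surviving $k=0$ piece $I(K_{n-1})^t$ into the $k=1$ piece $C\cdot I(K_{n-1})^{t-1}$; refactoring the remaining sum recovers $C\cdot I(K_n)^{t-1}$.

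Next, I would apply Lemma~\ref{exact}(1) to the short exact sequence
\[
0\to \bigl(S/(I(K_n)^t : x_n^{w_n})\bigr)(-w_n)\to S/I(K_n)^t\to S/(I(K_n)^t,\,x_n^{w_n})\to 0.
\]
For the quotient term, Lemma~\ref{sum1}(1) applied to the disjoint variable sets yields $\reg(S/(I(K_{n-1})^t,x_n^{w_n}))=\reg(S_{n-1}/I(K_{n-1})^t)+w_n-1$, and the induction on $n$ together with $\max_{i<n}w_i\leq w$ bounds this by the target $\sum w_i-n+1+(t-1)(w+1)$. For the kernel term, $\dim S/C=1$ lets Lemma~\ref{product} give $\reg(C\cdot I(K_n)^{t-1})\leq 1+\reg I(K_n)^{t-1}$; the induction on $t$ then bounds this expression, and after the twist by $w_n$ and using $w_n\leq w$ one checks it also stays within the target. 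Combining via Lemma~\ref{exact}(1) closes the upper bound.

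For the equality clause when $w=w_2$: when $n\geq 3$ the vertex $x_2$ lies in $V(K_{n-1})$, so $\max_{i<n}w_i=w$, and the inductive equality for $K_{n-1}$ forces the quotient term to attain the full bound exactly; the kernel-term estimate is no larger, so the equality clause of Lemma~\ref{exact}(1) applies and transfers equality up to $K_n$. The main obstacle is identity~(i): a direct Lemma~\ref{product} bound on $I(K_n)^t : x_n^{w_n}$ without first recognizing it as the single product $C\cdot I(K_n)^{t-1}$ would accumulate additional $\reg(C)$-contributions at each induction step and be too loose to close the recursion; the absorption of the residual $I(K_{n-1})^t$ using $I(K_{n-1})\subseteq C$ is what ensures that regularity grows by exactly $w+1$ per power of $t$.
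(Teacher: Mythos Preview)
Your plan is correct and takes a genuinely different, more economical route than the paper. Both arguments split off the sink vertex $x_n$ and induct on $n$ and $t$, but the paper iterates the colon by $x_n^{w_n}$ a full $t$ times, producing a chain of $t$ short exact sequences with intermediate quotients $Q_j=J^{\,j}I(K_{n-1})^{t-j}+(x_n^{w_n})$ that each have to be bounded separately via Lemma~\ref{product}. You instead observe that after a \emph{single} colon the absorption $I(K_{n-1})^t\subseteq C\cdot I(K_{n-1})^{t-1}$ collapses everything to the clean product $I(K_n)^t:x_n^{w_n}=C\cdot I(K_n)^{t-1}$, so one exact sequence plus a direct appeal to the $(n,t-1)$ case closes the induction. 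This buys you a shorter proof with only one invocation of Lemma~\ref{product} per step instead of $t$; the paper's route, on the other hand, makes explicit the whole filtration $P_0\supset P_1\supset\cdots\supset P_t=J^t$, which is reused verbatim in the proof of Theorem~\ref{power3}.

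Two small points to tighten. First, you should state the $t=1$ base case for general $n$ (the paper cites Theorem~\ref{regularity}(2); your single exact sequence also handles it with $I(K_n)^0=S$, but say so). Second, your equality argument is fine but the phrase ``equality clause of Lemma~\ref{exact}(1)'' deserves one line of justification: since the kernel bound satisfies $\reg M\le\text{target}=\reg P$, we have $\reg(M)-1<\reg P$, so the hypothesis $\reg P\neq\reg M-1$ is automatic and $\reg N=\max\{\reg M,\reg P\}=\reg P$.
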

\begin{proof}
	We apply induction on  $n$ and $t$. The case $n=2$ is trivial and the  case $t=1$ follows from Theorem \ref{regularity} (2). Now, assume that $n\geq 3$ and $t\geq 2$.
	Let $J=(x_1,\ldots, x_{n-1})$, then $I(K_n)^t=(I(K_n\setminus x_{n})+Jx_n^{w_n})^{t}=\sum\limits_{i=1}^tI(K_n\setminus x_{n})^{t-i}(Jx_n^{w_n})^i+I(K_n\setminus x_{n})^{t}$.
	Let $P_0=I(K_n)^t$, $P_j=I(K_n)^t:(x_n^{w_n})^j$ and $Q_{j-1}=P_{j-1}+(x_n^{w_n})$ for any $j\in [t]$.
	Since $I(K_n\setminus x_n)\subseteq J$, we have
	$P_j=\sum\limits_{i=j+1}^t(J^ix_{n}^{w_{n}(i-j)}I(K_n\setminus x_{n})^{t-i})+J^jI(K_n\setminus x_{n})^{t-j}$
	with the convention $P_t=J^t$, and $Q_{j-1}=J^{j-1}I(K_n\setminus x_{n})^{t-j+1}+(x_{n}^{w_{n}}).$
	Thus, by Lemma \ref{sum1} (1) and the inductive hypothesis, we have
	\begin{align*}
		\reg\,(S/Q_0)&=\reg\,(Q_0)-1=\reg\,(I(K_n\setminus x_{n})^t+(x_n^{w_n}))-1\\
		&=\reg\,(I(K_n\setminus x_{n})^t)+w_n-2\\
		&\leq\sum\limits_{i=1}^{n-1}{w_i}-(n-1)+2+(t-1)(w'+1)+w_n-2\\
		&=\sum\limits_{i=1}^{n}{w_i}-n+1+(t-1)(w'+1)\\
		&\le \sum\limits_{i=1}^{n}{w_i}-n+1+(t-1)(w+1),
	\end{align*}
	where $w'=\max\{w_i \,|\, i\in[n-1]\}$.
	By Lemma \ref{regular}, we have \begin{align*}
		\reg\,((S/P_t)(-tw_n)) &=\reg\,(S/J^t)+tw_n=(t-1)(w_n+1)+w_n\\
		&\leq\sum\limits_{i=1}^{n}{w_i}-n+1+(t-1)(w+1).
	\end{align*}
	Note that $\dim\,(S/J^j)=1$	for any $j\in[t-1]$. Then, by Lemmas \ref{product}, \ref{quotient} (1),   \ref{sum1} (1) and the inductive hypothesis, we have
	\begin{align*}
		\reg\,((S/Q_j)(-jw_n)) &=\reg\,(J^jI(K_n\setminus x_{n})^{t-j}+(x_n^{w_n}))-1+jw_n\\
		&\leq\reg\,(J^j)+\reg\,(I(K_n\setminus x_{n})^{t-j})+w_n-2+jw_n\\
		&\leq\sum\limits_{i=1}^{n}{w_i}-n+1+(t-j-1)(w'+1)+j(w_n+1)\\
		&\leq\sum\limits_{i=1}^{n}{w_i}-n+1+(t-1)(w+1).
	\end{align*}
	where the last inequality holds since $w_n,w'\leq w$. Using  Lemma \ref{exact} (1) to the following exact sequences  by shifting
	\begin{gather*}
		\hspace{1.0cm}\begin{matrix}
			0 & \longrightarrow & \frac{S}{P_1}(-w_n)  & \stackrel{ \cdot x_{n}^{w_n}} \longrightarrow  &  \frac{S}{I(K_n)^t} & \longrightarrow &\frac{S}{Q_0} & \longrightarrow & 0 & \\
			0 & \longrightarrow & \frac{S}{P_2}(-w_n) & \stackrel{ \cdot x_{n}^{w_n}} \longrightarrow & \frac{S}{P_1} &\longrightarrow & \frac{S}{Q_1} & \longrightarrow & 0  \\
			&  &\vdots&  &\vdots&  &\vdots&  &\\
			0 & \longrightarrow & \frac{S}{P_{t-1}}(-w_n) & \stackrel{ \cdot x_{n}^{w_n}} \longrightarrow & \frac{S}{P_{t-2}} &\longrightarrow & \frac{S}{Q_{t-2}} & \longrightarrow & 0  \\
			0 & \longrightarrow & \frac{S}{P_t}(-w_n) & \stackrel{ \cdot x_{n}^{w_n}} \longrightarrow & \frac{S}{P_{t-1}} &\longrightarrow & \frac{S}{Q_{t-1}} & \longrightarrow & 0,
		\end{matrix}
	\end{gather*}
	we get the desired results,
	
	If   $w_2=w$, then, by the same technique, we can get $\reg\,(S/Q_0)=\sum\limits_{i=1}^{n}{w_i}-n+1+(t-1)(w+1)$. Hence, the expected equality also holds in this case.
\end{proof}	

The following examples show that the upper bounds in  Theorems 	\ref{complete graphs} can be obtained, but may be strict.
\begin{Example}
	\label{exam4}
	Let $I(K_4)=(x_1x_2^7,x_1x_3^3,x_1x_4^6,x_2x_3^3,x_2x_4^6,x_3x_4^6)$ be the edge ideal of a weighted oriented complete graph $K_4$, its weight function is $w_1=1, w_2=7, w_3=3$ and $w_4=6$.  By using CoCoA, we obtain $\reg\,(S/I(K_4)^2)=\sum\limits_{i=1}^4{w_i}-4+1+(7+1)=22$, where   $w=7$.		
\end{Example}

\begin{Example}
	\label{exam5}
	Let $I(K_4)=(x_1x_2^2,x_1x_3^3,x_1x_4^6,x_2x_3^3,x_2x_4^6,x_3x_4^6)$ be the edge ideal of a weighted oriented complete graph $K_4$, its weight function is $w_1=1, w_2=2, w_3=3$ and $w_4=6$.  By using CoCoA, we obtain $\reg\,(S/I(K_4)^2)=15$. However, $\sum\limits_{i=1}^4{w_i}-4+1+(2-1)(w+1)=16$, where   $w=6$.
	\end{Example}

\begin{Lemma}\label{isolate point}
	Let $D=D_1*D_2$ be the join of two weighted oriented graphs  $D_1$ and $D_2$, where $V(D_i)=\{x_{ij}\,|\, j\in [n_i]\}$  and $n_i=|V(D_i)|$ for  $i\in [2]$. If $n_2=1$,
	then  \[
	\reg\,(S/I(D))\geq\reg\,(S_1/I(D_1))+w_{21}-1.
	\]
	where $S_1=\KK[x_{11} \ldots,x_{1n_1}]$. The equality holds when $D_1$ contains at least an  oriented edge.
\end{Lemma}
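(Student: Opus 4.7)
The approach is to study a short exact sequence obtained by multiplication by $x_{21}^{w_{21}}$ and then invoke Lemma \ref{exact}(1). Since $n_2=1$, the edge ideal factors as $I(D)=I(D_1)+x_{21}^{w_{21}}(x_{11},\dots,x_{1n_1})$. A direct monomial check -- using that generators of $I(D_1)$ lie in $(x_{11},\dots,x_{1n_1})$ and are free of $x_{21}$ -- gives $I(D):x_{21}^{w_{21}}=(x_{11},\dots,x_{1n_1})$ and $(I(D),x_{21}^{w_{21}})=I(D_1)+(x_{21}^{w_{21}})$, so one obtains the short exact sequence
\[
0\longrightarrow (S/(x_{11},\dots,x_{1n_1}))(-w_{21})\stackrel{\cdot x_{21}^{w_{21}}}{\longrightarrow} S/I(D)\longrightarrow S/(I(D_1)+(x_{21}^{w_{21}}))\longrightarrow 0.
\]
The kernel is isomorphic to $\KK[x_{21}](-w_{21})$, hence has regularity $w_{21}$, while Lemma \ref{sum1}(1), applied to the variable-disjoint summands $I(D_1)$ and $(x_{21}^{w_{21}})$, yields $\reg\,(S/(I(D_1)+(x_{21}^{w_{21}})))=\reg\,(S_1/I(D_1))+w_{21}-1$.

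To derive the asserted equality when $D_1$ contains at least one oriented edge, I would invoke Lemma \ref{exact}(1). In this case $I(D_1)$ has a minimal generator of degree at least two (since every source vertex has weight one), hence $\reg\,(S_1/I(D_1))\geq 1$; therefore $\reg\,(P)=\reg\,(S_1/I(D_1))+w_{21}-1\geq w_{21}$, which is distinct from $\reg\,(M)-1=w_{21}-1$. The equality clause of Lemma \ref{exact}(1) then gives
\[
\reg\,(S/I(D))=\max\{w_{21},\reg\,(S_1/I(D_1))+w_{21}-1\}=\reg\,(S_1/I(D_1))+w_{21}-1,
\]
which simultaneously furnishes the inequality and the equality in that subcase.

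The only remaining situation for the general inequality is $\reg\,(S_1/I(D_1))=0$ (i.e.\ $D_1$ has no oriented edge, whence $I(D_1)=0$), where the target bound becomes $\reg\,(S/I(D))\geq w_{21}-1$. Here $I(D)=x_{21}^{w_{21}}(x_{11},\dots,x_{1n_1})$ is minimally generated in degree $w_{21}+1$, so $\reg\,(I(D))\geq w_{21}+1$ and hence $\reg\,(S/I(D))\geq w_{21}>w_{21}-1$; thus the inequality holds unconditionally. The main obstacle is the colon-ideal calculation $I(D):x_{21}^{w_{21}}=(x_{11},\dots,x_{1n_1})$ that underpins the entire argument -- it needs a careful verification that nothing outside $(x_{11},\dots,x_{1n_1})$ is absorbed by multiplication by $x_{21}^{w_{21}}$, using that $I(D_1)\subset S_1$ has no $x_{21}$-factors; after that point, everything else is a routine application of the regularity machinery already collected in the preliminaries.
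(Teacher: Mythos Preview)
Your proposal is correct and follows essentially the same approach as the paper: both use the short exact sequence from multiplication by $x_{21}^{w_{21}}$, identify $I(D):x_{21}^{w_{21}}=(x_{11},\dots,x_{1n_1})$ and $(I(D),x_{21}^{w_{21}})=I(D_1)+(x_{21}^{w_{21}})$, and conclude via Lemma~\ref{exact}(1) together with $\reg\,(S_1/I(D_1))\ge 1$. The only cosmetic difference is that for the case where $D_1$ consists of isolated vertices the paper appeals to Theorem~\ref{bipartitepower}(2), whereas you give the direct degree bound $\reg\,(I(D))\ge w_{21}+1$; both are equally valid.
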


\begin{proof}
	The case  $D_1$ contains of isolated vertices is obvious by Theorem \ref{bipartitepower} (2).
	We can  assume that $D_1$ contains at least an  oriented edge. Since $I(D):x_{21}^{w_{21}}=(x_{1i}\,|\, i\in [n_1])$ and $I(D)+(x_{21}^{w_{21}})=I(D_1)+(x_{21}^{w_{21}})$, we have
	$\reg\,((S/(I(D):x_{21}^{w_{21}}))(-w_{21})) =w_{21}$ and $\reg\,(S/(I(D)+(x_{21}^{w_{21}}))) =\reg\,(S_1/I(D_1))+w_{21}-1$.
	We can obtain the desired result  by a fact that $\reg\,(S_1/I(D_1))\ge 1$ and by using Lemma \ref{exact} (1) to	the following exact sequence
	\[
	0\longrightarrow  (S/I(D):x_{21}^{w_{21}})(-w_{21}) \stackrel{\cdot x_{21}^{w_{21}}}\longrightarrow S/I(D)\longrightarrow S/(I(D),x_{21}^{w_{21}})\longrightarrow 0.\qedhere
	\]
\end{proof}

Now we are ready to present the main result of this section.
\begin{Theorem}
	\label{power3}
	Let  $D=D_1*D_2$ be the join of two weighted oriented graphs  $D_1$ and $D_2$,  where  $V(D_i)=\{x_{ij}\,|\, j\in [n_i]\}$ and $n_i=|V(D_i)|$ for  $i\in [2]$. If $D_2$ is a weighted oriented complete graph as Theorem \ref{complete graphs} and $\reg\,(S_1/I(D_1)^t)\leq \reg\,(S_1/I(D_1))+(t-1)(w'+1)$, where $S_1=\KK[x_{1i}: i\in V(D_1)]$ and $w'=\max\{w(x) \,|\, x\in V(D_1)\}$.
	Then, for any integer $t\ge 1$, we have
	\[
	\reg\,(S/I(D)^t)\leq \reg\,(S/I(D))+(t-1)(w+1)
	\]
where $w=\max\{w(x) \,|\, x\in V(D_1)\cup V(D_2)\}$. The equality holds  when $w'=w$ and $\reg\,(S_1/(I(D_1)^t))= \reg\,(S_1/I(D_1))+(t-1)(w'+1)$.
\end{Theorem}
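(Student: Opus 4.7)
The plan is to induct on $n_2:=|V(D_2)|$, with the base case $n_2=0$ reducing directly to the hypothesis on $D_1$ (since then $D=D_1$, $w=w'$, and $\reg(S/I(D))=\reg(S_1/I(D_1))$). For the inductive step I peel off the last vertex $z:=x_{2n_2}$ of $D_2$ with weight $w_z$ and write $I(D)=I(D')+N\,z^{w_z}$, where $D':=D_1*(D_2\setminus z)$ is again the join of $D_1$ with a smaller weighted oriented complete graph and $N:=(x_{1j},x_{2k}:j\in[n_1],\,k\in[n_2-1])$ is the ideal of all remaining variables. This mirrors the decomposition used in the proof of Theorem~\ref{complete graphs}.

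Following that proof I set $P_j:=I(D)^t:z^{jw_z}$ and $Q_{j-1}:=P_{j-1}+(z^{w_z})$ for $1\le j\le t$, verify that $P_t=N^t$ and that
\[
P_j=\sum_{i=j+1}^{t}N^i I(D')^{t-i}z^{(i-j)w_z}+N^j I(D')^{t-j},\qquad Q_{j-1}=N^{j-1}I(D')^{t-j+1}+(z^{w_z}),
\]
and iterate the short exact sequences $0\to(S/P_j)(-w_z)\to S/P_{j-1}\to S/Q_{j-1}\to 0$ via Lemma~\ref{exact}(1) to obtain
\[
\reg(S/I(D)^t)\le\max\!\Bigl\{\reg(S/P_t)+tw_z,\ \max_{0\le j\le t-1}\bigl\{\reg(S/Q_j)+jw_z\bigr\}\Bigr\}.
\]

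It remains to check that each term on the right is bounded by $\reg(S/I(D))+(t-1)(w+1)$. For $P_t$, the ideal $N$ is the graded maximal ideal of $S':=\KK[V(D)\setminus\{z\}]$, hence $\reg(S/N^t)=t-1$; combined with $w_z\le w$ and the lower bound $\reg(S/I(D))\ge w_z$ (which follows from $\reg(S_2/I(D_2))=\sum_{V_2}w(x)-|V_2|+1$ via Theorem~\ref{regularity} and Lemma~\ref{bipartitepower2}), the estimate goes through. For each $Q_j$, Lemma~\ref{sum1}(1) gives $\reg(S/Q_j)=\reg(S'/N^j I(D')^{t-j})+w_z-1$; since $\dim(S'/N^j)=0$, Lemma~\ref{product} splits off the $N^j$ factor, contributing $j$ to the regularity, and the inductive hypothesis applied to $D'$ bounds $\reg(S'/I(D')^{t-j})$ by $\reg(S'/I(D'))+(t-j-1)(w+1)$. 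The identity $\reg(S'/I(D'))+w_z-1=\reg(S/I(D))$, read off from Lemma~\ref{bipartitepower2} and Theorem~\ref{regularity}, collapses all the $Q_j$-terms to $\reg(S/I(D))+(t-1)(w+1)+j(w_z-w)\le\reg(S/I(D))+(t-1)(w+1)$.

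For the equality statement I assume $w'=w$ and that the hypothesis on $D_1$ is attained as an equality. An inner induction on $n_2$ propagates the equality to $D'$, giving $\reg(S/Q_0)=\reg(S/I(D))+(t-1)(w+1)$. To upgrade this to an equality for $\reg(S/I(D)^t)$, I would verify the strict inequality $\reg(S/P_1)+w_z<\reg(S/Q_0)$ under these hypotheses (the slack $j(w-w_z)\ge 1$ for $j\ge 1$ becomes a genuine gap precisely because $w$ is attained on $V_1$ rather than at $z$), so that the first short exact sequence forces $\reg(S/I(D)^t)=\reg(S/Q_0)$ via Lemma~\ref{exact}(1). The main obstacle is this last strict-inequality check: it is where the hypothesis $w'=w$ is actually consumed, and requires tracking constants carefully across all the $j\ge 1$ estimates.
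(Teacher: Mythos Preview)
Your approach is essentially the paper's own: induction on $n_2$, peeling off $z=x_{2n_2}$, the same ideals $P_j,Q_j$, the same chain of short exact sequences, and the same bounds via Lemmas~\ref{product}, \ref{sum1}, and \ref{regular}. Two remarks.

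First, the identity $\reg(S'/I(D'))+w_z-1=\reg(S/I(D))$ is Lemma~\ref{isolate point} (applied to $D=D'*\{z\}$, with $D'$ playing the role of $D_1$ there), not Lemma~\ref{bipartitepower2}, which requires both factors to contain an edge. You need $D'$ to have at least one oriented edge for the equality in Lemma~\ref{isolate point}; this is automatic once $n_2\ge 2$ (the join edges from $V_1$ to $V_2\setminus\{z\}$ do the job), and for $n_2=1$ it forces $D_1$ itself to have an edge---exactly the corner case the paper isolates in Lemma~\ref{isolate}.

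Second, your ``main obstacle'' in the equality case is not an obstacle at all, and your proposed resolution is actually wrong. You do \emph{not} need the strict inequality $\reg(S/P_1)+w_z<\reg(S/Q_0)$; in fact it can fail (nothing prevents $w_z=w$, so your claimed slack $j(w-w_z)\ge 1$ need not hold). What you need is only $\reg(S/P_1)+w_z\le B$, where $B:=\reg(S/I(D))+(t-1)(w+1)=\reg(S/Q_0)$, and this you have \emph{already proved} in the inequality part: the chain of exact sequences from $P_1$ down to $P_t$ gives $\reg(S/P_1)+w_z\le\max\{\reg(S/P_t)+tw_z,\ \reg(S/Q_j)+jw_z:1\le j\le t-1\}\le B$. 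Now apply the equality clause of Lemma~\ref{exact}(1) to the first sequence: since $\reg(S/Q_0)=B\ge\reg(S/P_1)+w_z>\reg(S/P_1)+w_z-1$, the condition $\reg(P)\neq\reg(M)-1$ holds, so $\reg(S/I(D)^t)=\max\{\reg(S/P_1)+w_z,\ \reg(S/Q_0)\}=B$. This is exactly how the paper closes the argument; no further constant-tracking is required.
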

\begin{proof}
	We apply induction on  $t$ and $n_2$. The case $t=1$ is trivial and the $n_2=1$ case  follows from Lemma \ref{isolate}.
	Now,  suppose $t\geq2$ and $n_2\geq2$.  Let $P_0=I(D)^t$, $P_j=I(D)^t:(x_{2n_2}^{w_{2n_2}})^j$ and $Q_{j-1}=P_{j-1}+(x_{2n_2}^{w_{2n_2}})$ for any $j\in [t]$, and  set $J=(x_{1i}\,|\,i\in[n_1])+(x_{2i}\,|\,i\in[n_2-1])$. By some  simple calculations, we can get
	$P_j=\sum\limits_{i=j+1}^tJ^ix_{2n_2}^{(i-j)w_{2n_2}}I(D\setminus x_{2n_2})^{t-i}+J^jI(D\setminus x_{2n_2})^{t-j}$
	with the convention $P_t=J^t$, and
	$Q_{j-1}=J^{j-1}I(D\setminus x_{2n_2})^{t-j+1}+(x_{2n_2}^{w_{2n_2}})$.
	
	By Lemmas \ref{sum1} (1), \ref{isolate point}  and the inductive hypothesis, we have
	\begin{align*}
		\reg\,(S/Q_0)&=\reg\,(S/I(D\setminus x_{2n_2})^t)+w_{2n_2}-1\\
		&\leq \reg\,(S/I(D\setminus x_{2n_2}))+(t-1)(w''+1)+w_{2n_2}-1\\
		&= \reg\,(S/I(D))+(t-1)(w''+1)\\
		&\leq \reg\,(S/I(D))+(t-1)(w+1),
	\end{align*}
	where $w''=\max\{w(x) \,|\, x\in V(D\setminus x_{2n_2})\}$.
	By Lemma \ref{regular}, we obtain that
	\begin{align*}
		\reg\,((S/P_t)(-tw_{2n_2})) &=\reg\,(S/J^t)+tw_{2n_2}=(t-1)(w_{2n_2}+1)+w_{2n_2}\\
		&\leq \reg\,(S/I(D))+(t-1)(w+1).
	\end{align*}
	where the   inequality holds since $w\geq w_{2n_2}$ and $\reg\,(S/I(D))\geq w_{2n_2}$.
	
	Note that $\dim\,(S/J^j)=1$	for any $j\in[t-1]$. Then, by Lemmas \ref{product}, \ref{quotient} (1),   \ref{sum1} (1) and the inductive hypothesis, we have
	\begin{align*}
		\reg\,((S/Q_j)(-jw_{2n_2}))
		&=\reg\,(J^jI(D\setminus x_{2n_2})^{t-j}+(x_{2n_2}^{w_{2n_2}}))-1+jw_{2n_2}\\
		&=\reg\,(J^jI(D\setminus x_{2n_2})^{t-j})+w_{2n_2}-2+jw_{2n_2}\\
		&\leq\reg\,(J^j)+\reg\,(I(D\setminus x_{2n_2})^{t-j})+w_{2n_2}-2+jw_{2n_2}\\
		&=j+\reg\,(S/I(D\setminus x_{2n_2})^{t-j})+w_{2n_2}-1+jw_{2n_2}\\
		&\leq\reg\,(S_1/I(D\setminus x_{2n_2}))+(t-j-1)(w''+1)\\
		&+w_{2n_2}-1+j(w_{2n_2}+1)\\
		&=\reg\,(S/I(D))+(t-j-1)(w''+1)+j(w_{2n_2}+1)\\
		&\leq\reg\,(S/I(D))+(t-1)(w+1)
	\end{align*}
	where the last inequality holds since $w_{2n_2}, w''\leq w$, and  the penultimate equality holds because  $D\setminus x_{2n_2}$ contains at least an  oriented edge.

	In particular, if  $w'=w$ and $\reg\,(S_1/(I(D_1)^t))= \reg\,(S_1/I(D_1))+(t-1)(w'+1)$, it follows from  Lemmas \ref{sum1} (1), \ref{isolate point} and  the inductive hypothesis that
		$\reg\,(S/Q_0)=\reg\,(S/I(D))+(t-1)(w+1)$.

	We can confirm the
	assertion by applying  Lemma \ref{exact} (1) to the following exact sequences
	\begin{gather}
		\hspace{1.0cm}\begin{matrix}
			0 & \longrightarrow & \frac{S}{P_1}(-w_{2n_2})  & \stackrel{ \cdot x_{2n_2}^{w_{2n_2}}} \longrightarrow  &  \frac{S}{I(D)^t} & \longrightarrow &\frac{S}{Q_0} & \longrightarrow & 0 & \\
			0 & \longrightarrow & \frac{S}{P_2}(-w_{2n_2}) & \stackrel{ \cdot x_{{2n_2}}^{w_{2n_2}}} \longrightarrow & \frac{S}{P_1} &\longrightarrow & \frac{S}{Q_1} & \longrightarrow & 0  \\
			&  &\vdots&  &\vdots&  &\vdots&  &  \label{eqn:123-1}\\
			0 & \longrightarrow & \frac{S}{P_{t-1}}(-w_{2n_2}) & \stackrel{ \cdot x_{2n_2}^{w_{2n_2}}} \longrightarrow & \frac{S}{P_{t-2}} &\longrightarrow & \frac{S}{Q_{t-2}} & \longrightarrow & 0  \\
			0 & \longrightarrow & \frac{S}{P_t}(-w_{2n_2}) & \stackrel{ \cdot x_{2n_2}^{w_{2n_2}}} \longrightarrow & \frac{S}{P_{t-1}} &\longrightarrow & \frac{S}{Q_{t-1}} & \longrightarrow & 0.
		\end{matrix}
	\end{gather}
\end{proof}	

\begin{Lemma}
\label{isolate}
With the assumptions and notation of Theorem 
\ref{power3}. If $n_2=1$, then
\[
\reg\,(S/I(D)^t)\leq \reg\,(S/I(D))+(t-1)(w+1)
\]
where $w=\max\{w(x) \,|\, x\in V_1\cup V_2\}$.
These equalities hold  when $w'=w$ and $\reg\,(S_1/(I(D_1)^t))= \reg\,(S_1/I(D_1))+(t-1)(w'+1)$ for all $t\geq1$.
\end{Lemma}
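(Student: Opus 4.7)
The plan is to mirror the colon-splitting induction used in the proof of Theorem~\ref{power3}, specialized to the setting $V_2 = \{x_{21}\}$, in which $D$ is a cone over $D_1$ with apex $x_{21}$. The base case $t=1$ is trivial, so I would induct on $t$. Writing $J_1 = (x_{11},\ldots,x_{1n_1})$, one has $I(D) = I(D_1) + x_{21}^{w_{21}} J_1$ and hence the expansion $I(D)^t = \sum_{i=0}^{t} I(D_1)^{t-i} J_1^i \, x_{21}^{i w_{21}}$. The key structural observation is that every generator of $I(D_1)$ has degree at least $2$ in the variables of $S_1$, so $I(D_1) \subseteq J_1^2$, which yields the monotonicity $I(D_1)^{t-i} J_1^i \subseteq I(D_1)^{t-i'} J_1^{i'}$ for $0 \leq i \leq i' \leq t$.

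Using this monotonicity, I would first identify the colon ideals $P_j := I(D)^t : x_{21}^{j w_{21}}$ and the auxiliary ideals $Q_{j-1} := P_{j-1} + (x_{21}^{w_{21}})$ in simple form: $P_t = J_1^t$, and $Q_{j-1} = I(D_1)^{t-j+1} J_1^{j-1} + (x_{21}^{w_{21}})$ for every $j \in [t]$. Since $x_{21}$ is absent from the support of $I(D_1)^{t-j+1} J_1^{j-1}$, Lemma~\ref{sum1}(1) reduces $\reg(S/Q_{j-1})$ to a computation inside $S_1$. For $j \geq 2$ the factor $J_1^{j-1}$ satisfies $\dim(S_1/J_1^{j-1}) = 0$, so Lemma~\ref{product} applies and, combined with the inductive hypothesis on $\reg(S_1/I(D_1)^{t-j+1})$ together with the identity $\reg(S/I(D)) = \reg(S_1/I(D_1)) + w_{21} - 1$ from Lemma~\ref{isolate point}, yields
\[
\reg((S/Q_{j-1})(-(j-1)w_{21})) \leq \reg(S/I(D)) + (t-1)(w+1),
\]
using $w_{21}, w' \leq w$. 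The leaf term $\reg((S/P_t)(-t w_{21})) = (t-1) + t w_{21}$ is bounded by the same quantity once one notes $\reg(S/I(D)) \geq w_{21}$.

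With these estimates in hand, the upper bound follows by iterated application of Lemma~\ref{exact}(1) along the chain of short exact sequences
\[
0 \longrightarrow (S/P_j)(-w_{21}) \stackrel{\cdot x_{21}^{w_{21}}}{\longrightarrow} S/P_{j-1} \longrightarrow S/Q_{j-1} \longrightarrow 0, \qquad j = 1, \ldots, t,
\]
exactly as in the proof of Theorem~\ref{power3}. For the equality claim, the assumptions $w = w'$ and $\reg(S_1/I(D_1)^t) = \reg(S_1/I(D_1)) + (t-1)(w'+1)$ force $\reg(S/Q_0) = \reg(S/I(D)) + (t-1)(w+1)$, which realizes the upper bound; to propagate this equality to $\reg(S/I(D)^t)$ one must verify at each stage the non-coincidence condition of Lemma~\ref{exact}(1), namely $\reg(S/Q_{j-1}) \neq \reg(S/P_j) + w_{21} - 1$. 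I expect this last verification — maintaining the strict inequalities throughout the chain — to be the main obstacle, and it is likely where the hypothesis that $\reg(S_1/I(D_1)^s) = \reg(S_1/I(D_1)) + (s-1)(w'+1)$ holds for \emph{all} $s \geq 1$ (rather than merely for the current $t$) becomes essential.
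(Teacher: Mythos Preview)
Your approach is correct and essentially identical to the paper's own proof, which uses the same colon-splitting induction, the same identification of $P_j$ and $Q_{j-1}$, and the same exact-sequence chain. Your worry about the equality case is unfounded: once you have $\reg(S/Q_0)=A$ (the claimed bound) together with $\reg((S/P_1)(-w_{21}))\le A$ from the rest of the chain, the non-coincidence condition of Lemma~\ref{exact}(1) is automatic because $\reg(M)-1\le A-1<A=\reg(P)$, so $\reg(S/I(D)^t)=\max\{\reg(M),\reg(P)\}=A$ follows from the first short exact sequence alone, with no need to propagate strict inequalities through the remaining steps.
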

\begin{proof} The proof will be essentially the same as that for Theorem \ref{power3}.
	We apply induction on  $t$ with the $t=1$  case being trivial.   Now,  suppose $t\geq2$.  Let   $J=(x_{1i}\,|\,i\in[n_1])$, $P_0=I(D)^t$, $P_j=I(D)^t:(x_{21}^{w_{21}})^j$ and $Q_{j-1}=P_{j-1}+(x_{21}^{w_{21}})$ for any $j\in [t]$. \\
	By Lemmas \ref{sum1} (1) and \ref{isolate point} , we have
	\begin{align*}
		\reg\,(S/Q_0)&=\reg\,(S_1/(I(D\setminus x_{21})^t)+w_{21}-1\\
		&=\reg\,(S_1/(I(D_1)^t)+w_{21}-1\\
		&\leq\reg\,(S_1/I(D_1))+(t-1)(w'+1)+w_{21}-1\\
		&\leq\reg\,(S/I(D))+(t-1)(w'+1)\\
		&\leq\reg\,(S/I(D))+(t-1)(w+1)
	\end{align*}
	where the first inequality holds by the assumption that $\reg\,(S_1/(I(D_1)^t))+w_{21}-1
	\leq\reg\,(S_1/I(D_1))+(t-1)(w'+1)+w_{21}-1$.\\
	By some similar arguments as the proof of Theorem \ref{power3}, we can get	
	\begin{align*}
		\reg\,((S/P_t)(-tw_{2n_2}))&\leq \reg\,(S/I(D))+(t-1)(w+1),\\
		\reg\,((S/Q_j)(-jw_{21}))&\leq\reg\,(S/I(D))+(t-1)(w+1) \text{\ for\ } j=1,\ldots,t-1.
	\end{align*}
	In particular, if  $w'=w$ and $\reg\,(S_1/(I(D_1)^t))= \reg\,(S_1/I(D_1))+(t-1)(w'+1)$ for all $t\geq1$, then
	$D_1$ contains at least an  oriented edge, because on the contrary, we have  $\reg\,(S_1/(I(D_1)^t))=0$. It follows from  Lemmas \ref{sum1} (1)  and \ref{isolate point} that
	$\reg\,(S/Q_0)=\reg\,(S/I(D))+(t-1)(w+1)$.
	Thus we can obtain the assertion by applying Lemma \ref{exact} (1) to the  exact sequences (\ref{eqn:123-1}).
\end{proof}

\begin{Remark}
\label{rem1}The conditions that $\reg\,(S_1/I(D_1)^t)\leq \reg\,(S_1/I(D_1))+(t-1)(w'+1)$ in Theorems 	\ref{power3}  can be obtained for many weighted oriented graphs, such as  weighted rooted forests,  naturally weighted  oriented cycles,   the disjoint union of some weighted oriented  gapfree bipartite graph and so on
 {\em (see \cite[Theorem 4.1]{XZWZ}, \cite[Theorem 4.5]{WZX},  \cite[Theorem 4.9]{ZXWZ}).}
\end{Remark}

As a consequence of Lemma \ref{isolate}, we have the following:
\begin{Corollary}\label{wheel graph}
Let $D_1$ be a naturally weighted  oriented cycle with $w(x)\geq2$ for all $x\in V(D_1)$ and let $D_2$ be a graph composed of a isolated vertex. If $D=D_1*D_2$, i.e.,
$D$ is a weighted oriented wheel graph, and   $w:=\max\{w(x) \,|\, x\in V(D)\}=\max\{w(x) \,|\, x\in V(D_1)\}$.
Then
\[
\reg\,(S/I(D)^t)=\sum\limits_{x\in V(D)}{w(x)}-|V(D)|+1+(t-1)(w+1).
\]
\end{Corollary}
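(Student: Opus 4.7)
The plan is to reduce the corollary to a direct application of Lemma \ref{isolate}. Since $D_2$ consists of a single isolated vertex, we have $n_2=1$, which is precisely the hypothesis of Lemma \ref{isolate}. The corollary's hypothesis gives $w := \max\{w(x) : x \in V(D)\} = \max\{w(x) : x \in V(D_1)\} =: w'$, so the equality condition $w' = w$ from Lemma \ref{isolate} is satisfied. The remaining equality condition, $\reg(S_1/I(D_1)^t) = \reg(S_1/I(D_1)) + (t-1)(w'+1)$ for all $t \geq 1$, holds for a naturally weighted oriented cycle with $w(x) \geq 2$ on every vertex by the result cited in Remark \ref{rem1} (namely \cite[Theorem 4.5]{WZX}). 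Thus Lemma \ref{isolate} applies with equality, giving $\reg(S/I(D)^t) = \reg(S/I(D)) + (t-1)(w+1)$.

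It then remains to identify $\reg(S/I(D))$ with the expected closed form. Since $D_1$ contains an oriented edge, Lemma \ref{isolate point} yields $\reg(S/I(D)) = \reg(S_1/I(D_1)) + w_{21} - 1$. The $t = 1$ case of the regularity formula for naturally weighted oriented cycles from \cite[Theorem 4.5]{WZX} gives $\reg(S_1/I(D_1)) = \sum_{x \in V(D_1)} w(x) - |V(D_1)| + 1$. Substituting and using $|V(D)| = |V(D_1)| + 1$ and $\sum_{x \in V(D)} w(x) = \sum_{x \in V(D_1)} w(x) + w_{21}$ yields
\[
\reg(S/I(D)) = \sum_{x \in V(D)} w(x) - |V(D)| + 1,
\]
and combining with the previous paragraph proves the claimed formula.

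There is really no substantive obstacle here; the corollary is a bookkeeping consequence of two inputs: the sharp form of Lemma \ref{isolate} (which does the work on powers) and the known regularity formula for naturally weighted oriented cycles (which supplies the base case $t = 1$ and the linear growth of $\reg(S_1/I(D_1)^t)$). The only point requiring a brief check is that $w = w'$ indeed forces $w_{21} \leq w$, so the shift $w_{21}-1$ coming from Lemma \ref{isolate point} is compatible with the slope $w+1$ produced by Lemma \ref{isolate}; this is immediate from the hypothesis that the maximum weight is attained on $V(D_1)$.
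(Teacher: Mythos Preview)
Your proof is correct and follows essentially the same route as the paper's: both invoke \cite[Theorem 4.5]{WZX} to verify the linear-growth hypothesis on $\reg(S_1/I(D_1)^t)$, then appeal to the $n_2=1$ case of the main theorem (you cite Lemma \ref{isolate} directly, the paper cites Theorem \ref{power3}, which for $n_2=1$ reduces to Lemma \ref{isolate}). Your proof is in fact more complete than the paper's terse two-line argument, since you explicitly carry out the base-case computation of $\reg(S/I(D))$ via Lemma \ref{isolate point} rather than leaving it implicit.
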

\begin{proof}   By \cite[Theorem 4.5]{WZX}, we have $\reg\,(S_1/(I(D_1)^t))= \reg\,(S_1/I(D_1))+(t-1)(w+1)$ for all $t\geq1$.
 The expected formulas follow from 	Theorem \ref{power3}.
\end{proof}		

The following examples show that the upper bounds in  Theorems 	\ref{power3} can be obtained, but may be strict.

\begin{Example}
		\label{exam6}
		Let $I(D)=(x_1x_2^2,x_2x_3^2,x_3x_1^4,x_1y^3,x_2y^3,x_3y^3)$ be the edge ideal of a weighted oriented wheel graph $D$, its weight function is $w(x_1)=4, w(x_2)=w(x_3)=2$ and $w(y)=3$. By using CoCoA, we have $\reg\,(S/I(D))=7$, $\reg\,(S/I(D)^2)=12$. Thus $\reg\,(S/I(D)^2)=\reg\,(S/I(D))+(w+1)$, where $w=4$.		
\end{Example}

\begin{Example}
  \label{exam7}
  Let $I(D)=(x_1x_2^2,x_2x_3^2,x_3x_1^2,x_1y^3,x_2y^3,x_3y^3)$ be the edge ideal of a weighted oriented wheel graph $D$, its weight function is $w(x_1)=w(x_2)=w(x_3)=2$ and $w(y)=3$. By using CoCoA, we have $\reg\,(S/I(D))=5$, $\reg\,(S/I(D)^2)=8$. Thus $\reg\,(S/I(D)^2)<\reg\,(S/I(D))+(w+1)$.
  \end{Example}

\medskip
\hspace{-6mm} {\bf Acknowledgments}

 \vspace{3mm}
\hspace{-6mm} The authors are grateful to the computer algebra system CoCoA \cite{Co} for providing us with a large number of examples.  This research is supported by the Natural
Science Foundation of Jiangsu Province (No. BK20221353)  and   foundation of the Priority Academic Program Development of Jiangsu Higher Education Institutions. The third author is supported by the National Natural Science Foundation of China (No. 12126330).

\medskip

\end{document}